\numberwithin{equation}{section}
 \newtheorem{thm}{Theorem}[section]
 \newtheorem{lem}[thm]{Lemma}
 \newtheorem{exam}[thm]{Example}
 \newtheorem{prop}[thm]{Proposition}
 \newtheorem{cor}[thm]{Corollary}
 \newtheorem{rem}[thm]{Remark}
 \newtheorem{defn}[thm]{Definition}
 \def\Id{\mathop{\rm Id}\nolimits}
 \def\re{\mathop{\rm Re\,}\nolimits}
 \def\dom{\mathop{\rm dom}\nolimits}
 \def\ran{\mathop{\rm ran}\nolimits}
 \def\mul{\mathop{\rm mul}\nolimits}
\newcommand{\C}{{\mathbb C}}   
 \newcommand{\R}{{\mathbb R}}           
\newcommand{\N}{{\mathbb N}}        
\newcommand{\K}{{\mathbb K}}        
\newcommand{\Z}{{\mathbb Z}}          
\author{W. Arendt}
\address{Wolfgang Arendt, Institute of Applied Analysis, University of Ulm. Helmholtzstr. 18, D-89069 Ulm (Germany)} 
\email{wolfgang.arendt@uni-ulm.de}
\author{I. Chalendar}
\address{Isabelle Chalendar,  Universit\'e Gustave Eiffel, LAMA, (UMR 8050), UPEM, UPEC, CNRS, F-77454, Marne-la-Vallée (France)}
\email{isabelle.chalendar@univ-eiffel.fr}
\author{B. Moletsane}
\address{Boitumelo Moletsane, University of the Witswatersrand,  Johannesburg (South Africa)}
\email{boitumelo.moletsane@wits.ac.za}
\title[Semigroups generated by multivalued operators] 
{Semigroups generated by multivalued operators and domain convergence for parabolic problems}   
\keywords{Semigroups of operators, integrated semigroups, $m$-dissipativity, linear relations,  heat equation, Dirichlet boundary conditions}
\subjclass[2010]{47A06, 47D62, 47D60, 47D62}
\begin{document}	
\begin{abstract}   
 The following version of the Lumer--Phillips  Theorem \cite{LP61} is proved: a surjective dissipative operator  is $m$-dissipative and invertible. This result remains true if dissipative linear relations (i.e. multivalued operators) are considered.  The main purpose of this article is to study  relations which generate semigroups. We consider $m$-dissipative relations and also the holomorphic estimate for relations. Such relations   are very useful if domain perturbations for the Laplacian are studied.     
\end{abstract}	
\maketitle
    
\section{Introduction}\label{sec:1}
Multivalued operators occur naturally in many subjects of analysis. For example, in the theory of non-linear semigroups they are commonly used (see \cite{Bre73}, \cite[IV.1]{Sho97}); they are a central object in viability theory \cite{ABSP11}, and, last but not least, the theory of selfadjoint multivalued operators and boundary value problems has  been developed to a far extent (see the monograph \cite{BHS20} by Behrndt, Hassi and de Snoo for a comprehensive presentation).   

The terminology used in the literature is not uniform. We follow the monograph \cite{BHS20} and use the  word "(linear) relation", which is nothing else than a subspace of $X\times X$ where $X$ is the underlying Banach space.  Our aim is to study $m$-dissipative relations in the spririt of semigroups. We prove generation results and results on approximation which we apply to domain convergence for parabolic boundary value problems.   

There are  some remarkable typically linear phenomena which 
are worth it to be  considered in the more general situation of relations.  One of them is the closed range theorem. It is well-known for densely defined operators, but we show that it remains true for arbitary closed relations. As a consequence,  the set of all surjective relations is open, an essential fact used in the proof of one of our main results. It concerns the Lumer--Phillips Theorem which we recall here. 
\begin{thm}\label{th:1.1}
Let $A$ be an operator on $X$. The following assertions are equivalent:
\begin{itemize}
	\item[(i)] $A$ generates a contractive $C_0$-semigroup;
	\item[(ii)] \begin{itemize}
		             \item[(a)] $\dom A$ is dense;
		             \item[(b)] $A$ is dissipative and 
		             \item[(c)] $\ran(\lambda -A)=X$ for some $\lambda >0$.  
		             \end{itemize}
 \end{itemize}  
\end{thm}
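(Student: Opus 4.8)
The plan is to establish the two implications separately, treating (i)$\Rightarrow$(ii) as routine and concentrating the real work on (ii)$\Rightarrow$(i), which I would reduce to the Hille--Yosida theorem by upgrading the single range condition (c) to every $\lambda>0$.

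For (i)$\Rightarrow$(ii): suppose $A$ generates a contractive $C_0$-semigroup $(T(t))_{t\ge 0}$. Density of $\dom A$ is one of the basic structural properties of $C_0$-semigroups, so (a) is immediate. For (b) and (c) I would invoke the Laplace representation of the resolvent: for every $\lambda>0$ the operator $\lambda-A$ is invertible with $(\lambda-A)^{-1}y=\int_0^\infty e^{-\lambda t}T(t)y\,dt$, so that $\|(\lambda-A)^{-1}\|\le\int_0^\infty e^{-\lambda t}\,dt=1/\lambda$. Invertibility for every $\lambda>0$ already gives (c), and substituting $y=(\lambda-A)x$ into the resolvent bound yields $\|(\lambda-A)x\|\ge\lambda\|x\|$ for all $x\in\dom A$ and $\lambda>0$, which is dissipativity (b).

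For (ii)$\Rightarrow$(i): hypothesis (b) is exactly the estimate $\|(\lambda-A)x\|\ge\lambda\|x\|$ for all $\lambda>0$ and $x\in\dom A$, so each $\lambda-A$ is injective with a bounded inverse on its range, of norm at most $1/\lambda$. Combined with (c), the operator $\lambda_0-A$ is then a bijection of $\dom A$ onto $X$ with bounded inverse; hence $\lambda_0-A$ is closed and so is $A$. The crucial step is to promote (c) from the single value $\lambda_0$ to every $\lambda>0$, which I would do by a connectedness argument applied to $S:=\{\lambda>0:\ \ran(\lambda-A)=X\}$. This set is nonempty by (c); it is open because the set of surjective operators (relations) is open, the fact emphasized above; and it is closed in $(0,\infty)$ because the uniform resolvent bound $1/\lambda$ supplied by dissipativity keeps the spectrum at distance at least $\lambda$ from any point of $S$, so no boundary point of $(0,\infty)$ inside can be reached. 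Since $(0,\infty)$ is connected, $S=(0,\infty)$. Once $(0,\infty)\subseteq\rho(A)$ with $\|(\lambda-A)^{-1}\|\le 1/\lambda$, the first-power estimate self-improves via $\|(\lambda-A)^{-n}\|\le\|(\lambda-A)^{-1}\|^n\le\lambda^{-n}$, so together with density of $\dom A$ the Hille--Yosida theorem yields that $A$ generates a contractive $C_0$-semigroup.

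I expect the main obstacle to be the extension of the range condition to all $\lambda>0$. Its openness part rests on the perturbation principle for surjectivity (the closed-range and open-mapping machinery quoted for relations), while its closedness part must be extracted from dissipativity, which is what forbids the spectrum from accumulating at an interior point of $S$; the final appeal to Hille--Yosida is then essentially bookkeeping.
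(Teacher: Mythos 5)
Your proof is correct. Note, however, that the paper does not prove Theorem~\ref{th:1.1} at all: it is recalled as the classical Lumer--Phillips theorem with a citation to \cite{LP61}, and the paper's own contribution is the variant Theorem~\ref{th:1.2} (surjectivity at $\lambda=0$ instead of the range condition at some $\lambda>0$). Your argument is the standard textbook proof, and it is sound: the reduction of (ii)$\Rightarrow$(i) to Hille--Yosida via the connectedness of $S=\{\lambda>0:\ran(\lambda-A)=X\}$ is exactly the step that the paper's machinery (Proposition~\ref{prop:3.1}, Corollary~\ref{cor:3.2}, or Corollary~\ref{cor:2.7} once closedness of $A$ is established, as you do) would also deliver, and the contraction case of Hille--Yosida indeed needs only the first-power resolvent estimate.
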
 
One says that the operator $A$ is \emph{$m$-dissipative} if $(b)$ and $(c)$ are satisfied.   This is equivalent  to   $(0,\infty)\subset \rho(A)$ (the \emph{resolvent set} of $A$) and $\|\lambda R(\lambda, A)\|\leq 1$ for all $\lambda>0$.  However, in some instances, it is much easier to prove the range condition $(c)$ for $\lambda=0$ instead of $\lambda>0$; the Dirichlet Laplacian is one such example (see the proof of Theorem~\ref{th:6.1}). And indeed we prove here the following new version of the Lumer--Phillips Theorem. 
\begin{thm}\label{th:1.2}
Let $A$ be an operator (or merely a relation) on $X$. The following assertions are equivalent:
\begin{itemize}
	\item[(i)] $A$ is $m$-dissipative and invertible;
	\item[(ii)] $A$ is dissipative and surjective.  
\end{itemize}
\end{thm}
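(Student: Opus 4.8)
The plan is to treat the two implications separately; (i)$\Rightarrow$(ii) is immediate and the whole substance lies in (ii)$\Rightarrow$(i). For (i)$\Rightarrow$(ii): $m$-dissipativity contains dissipativity by definition, and invertibility means $0\in\rho(A)$, so $(0-A)^{-1}\in\LX$ and in particular $\ran(0-A)=X$; since $\ran(-A)=-\ran A=\ran A$, this is exactly $\ran A=X$, i.e. surjectivity.

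For (ii)$\Rightarrow$(i) I would first pass to the closure $\overline A$: the inequality $\|\lambda x-y\|\ge\lambda\|x\|$ ($\lambda>0$) survives limits, so $\overline A$ is dissipative, and $\ran\overline A\supseteq\ran A=X$ keeps it surjective; I will recover $A=\overline A$ at the end. Dissipativity gives, for every $\lambda>0$, that $\lambda-\overline A$ is injective with $\|(\lambda-\overline A)^{-1}z\|\le\lambda^{-1}\|z\|$ on its range, so producing the range condition $\ran(\lambda-\overline A)=X$ for a single $\lambda>0$ already yields $m$-dissipativity. To get it I write $\lambda-\overline A=(0-\overline A)+\lambda\Id$: the relation $0-\overline A$ is surjective and closed, while $\lambda-\overline A$ is a bounded perturbation of it of norm $\lambda$, hence converges to it in the gap metric as $\lambda\to0^+$; since the set of surjective relations is open, $\ran(\lambda-\overline A)=X$ for all small $\lambda>0$.

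The delicate point, and the one I expect to be the main obstacle, is invertibility, i.e. $0\in\rho(\overline A)$: here the contraction bound $\lambda^{-1}$ is useless because it blows up at the endpoint $\lambda=0$, and a multivalued ``kernel'' $(x_0,0)\in\overline A$ is entirely compatible with the resolvent identity, so it cannot be ruled out by analytic continuation of the resolvent---surjectivity must enter in an essential way. The way around this is again the openness result, now in the quantitative form underlying it (lower semicontinuity of the reduced minimum modulus): the inverses stay uniformly bounded, $\|(\lambda-\overline A)^{-1}\|\le M$ for $0<\lambda<\lambda_0$. Given $(x,y)\in 0-\overline A$, one has $(x,y+\lambda x)\in\lambda-\overline A$, whence $x=(\lambda-\overline A)^{-1}(y+\lambda x)$ and $\|x\|\le M\|y\|+\lambda M\|x\|$; fixing $\lambda<1/M$ gives $\|x\|\le C\|y\|$. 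Thus $0-\overline A$ is injective and bounded below, and being also surjective it has a bounded inverse, i.e. $0\in\rho(\overline A)$.

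Finally I would transfer everything back to $A$. Since $(\overline A)^{-1}\in\LX$, we have $\overline A=\{((\overline A)^{-1}y,y):y\in X\}$; for each $y\in X$ surjectivity of $A$ provides $x$ with $(x,y)\in A\subseteq\overline A$, and injectivity of $\overline A$ forces $x=(\overline A)^{-1}y$, so $((\overline A)^{-1}y,y)\in A$ for every $y$ and hence $\overline A\subseteq A$. Therefore $A=\overline A$ is closed, $m$-dissipative and invertible, which completes (ii)$\Rightarrow$(i).
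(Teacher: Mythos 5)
Your proof is correct, and for the crucial step it takes a genuinely different route from the paper. Both arguments begin identically: pass to $\overline A$, which is dissipative and surjective, obtain $\ran(\lambda-\overline A)=X$ for small $\lambda>0$ from the openness of surjectivity (Corollary~\ref{cor:2.7}), conclude $m$-dissipativity of $\overline A$, and at the end recover $A=\overline A$ from $\ker\overline A=\{0\}$ exactly as you do. The divergence is in proving $\ker\overline A=\{0\}$ (equivalently $0\in\rho(\overline A)$). The paper proves a separate statement (Proposition~\ref{prop:4.5}): if an $m$-dissipative relation has a nonzero kernel element $u$, a weak\textsuperscript{*} cluster point of the net $\lambda R(\lambda,A)'x'$ as $\lambda\to 0^+$ produces $u'\in\ker\overline A'$ with $\langle u',u\rangle=\|u\|\neq 0$, contradicting $\ker\overline A'=(\ran\overline A)^\perp=\{0\}$. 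You instead extract the quantitative content of the perturbation argument --- the dual lower bound $\alpha\|y'\|\le\|x'\|$ degrades only to $\alpha-\lambda$ under the perturbation $\lambda\Id$, which via $\|R(\lambda,\overline A)\|=\|R(\lambda,\overline A)'\|$ yields $\|(\lambda-\overline A)^{-1}\|\le(\alpha-\lambda)^{-1}\le M$ uniformly for small $\lambda$ --- and then run the elementary estimate $\|x\|\le M\|y\|+\lambda M\|x\|$ to see that $\overline A$ is bounded below, hence invertible by Proposition~\ref{prop:2.1}. Your route buys a more elementary proof (no nets, no weak\textsuperscript{*} compactness, no auxiliary proposition about kernels of adjoints), at the cost of having to open up the proof of Corollary~\ref{cor:2.7} rather than cite its statement: the one step you should write out explicitly is the passage from the dual estimate $(\alpha-\lambda)\|y'\|\le\|z'\|$ on $(\lambda-\overline A)'$ to the bound on $\|R(\lambda,\overline A)\|$, which uses that $\lambda-\overline A$ is already known to be invertible so that $(\lambda-\overline A)'$ is the graph of $R(\lambda,\overline A)'$. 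The paper's Proposition~\ref{prop:4.5} has independent interest (it applies to any $m$-dissipative relation, not only surjective ones), which is presumably why the authors chose that path.
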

 It is remarkable that  an operator  (or a relation) $A$ satisfying $(ii)$ is automatically closed. 
 
 One purpose of the article is to study $m$-dissipative relations. On reflexive spaces they always generate a strongly continuous contractive semigroup $T: [0,\infty) \to{\mathcal L}(X)$ where $T(0)$ is a projection. On general Banach spaces a closed relation is $m$-dissipative if and only if it generates a $1$-Lipschitz continuous integrated semigroup, and this is in turn, can be characterized by the well-posedness of an evolutionary problem. This situation is different for holomorphic semigroups (which here may be degenerate). A closed relation generates a bounded holomorphic semigroup, i.e.  a bounded, holomorphic mapping $T:\Sigma_\alpha\to {\mathcal L}(X)$ satisfying $T(z_1+z_2)=T(z_1)T(z_2)$ for all $z_1,z_2\in \Sigma_\alpha$ if and only if $A$ satisfies the usual holomorphic estimate. Here $\Sigma_\alpha$ is an open sector of angle $0<\alpha\leq \frac{\pi}{2}$ and the semigroup may be degenerate (i.e. there may exist $0\neq x\in X$ such that $T(z)x=0$ for all $z\in \Sigma_\alpha$). One of the main points is to characterize   strong convergence of sequences of semigroups (or integrated semigroups) in terms of the resolvents of their generators. The strongest results are obtained in the case of holomorphic semigroups. Here things are even easier to formulate than in the case of operators since the limit needs merely to be a relation  and the limit semigroup may be degenerate. These results are applied to study domain perturbation. If $\Omega_n,\Omega$ are open sets all contained in a large ball $B$, we consider solutions $u_n:(0,\infty)\to C (\overline{B})$ of 
 \[  \dot{u_n}(t)=\Delta u_n (t)\mbox{ on }\Omega_n,\,\, u_n(t)=0\mbox{ on }\overline{B}\setminus \Omega_n\mbox{ and }u_n(0)=u_0\in C(\overline{B})\]
 and show that $u_n(t)\to u(t)$ uniformly on $\overline{B}$ if $\Omega_n\to\Omega$ in a  suitable sense, where $u$ is the corresponding  solution for $\Omega$. The novelty is uniform convergence, which, so far, seems not to be known for the parabolic problem. For the elliptic problems nearly optimal results are known (see \cite{AD07,AD08})    which can be applied here. 
 
 Several results in our study of relations can  be taken over from the case of operators. For those   we just refer to the proofs in the literature. For others, new ideas are needed. The Lumer--Phillips Theorem in the form of Theorem~\ref{th:1.2} seems new even in the case of  operators; the convergence theorems are more delicate, and also the well-posedness results  need new arguments. 
  We expect
  that the abstract results presented here will also be useful for other applications (than the domain convergence put in the focus here) where multivalued operators occur naturally  (for example for the Dirichlet-to-Neumann operator in certain spectral situations or for Robin boundary conditions on bad domains).

The paper is organized as follows. In Section~\ref{sec:2} we consider arbitrary closed (linear) relations, prove the closed-range theorem and show that surjective relations are open. Section~\ref{sec:3} is devoted to the spectral theory of relations; i.e. we consider the resolvent of a relation. Then $m$-dissipative relations are introduced in Section~\ref{sec:4} where also the new version, Theorem~\ref{th:1.2}, of the Lumer--Phillips Theorem is proved. Also maximal dissipative relations are considered  in this section and a convergence result of Trotter--Kato type is proved. 

In Section~\ref{sec:5} we establish generation theorems and extend the convergence result from resolvents to semigroups and  integrated semigroups. Then we study domain approximation for the heat equation with Dirichlet boundary conditions in Section~\ref{sec:6}.  Section~\ref{sec:7} is devoted to relations generating a holomorphic semigroup with a corresponding Trotter--Kato Theorem. Finally we show in Section~\ref{sec:8} that the holomorphic semigroups generated by the Dirichlet Laplacian converge if the domains converge.

\section{Surjective relations}\label{sec:2}
Let $X$ and $Y$ be Banach spaces. A (linear) \emph{relation} from $X$ to $Y$ is a subspace $A$ of $X\times Y$. 
Since we merely consider linear relations here, we will omit "linear" in general.  

A relation $A$ is \emph{closed} whenever it is closed in $X\times Y$ equipped with the product topology.  The closure $\overline{A}$ of $A$ is defined in the space $X\times Y$. 

We define the \emph{domain}, \emph{range}, \emph{kernel} and \emph{multivalued part} of $A$ by 
 \[ \dom A=\{ x\in X: \exists y\in Y ,(x,y)\in A\}\]
 \[ \ran A =\{y\in Y:\exists x\in X, (x,y)\in A \}\]
 \[  \ker A =\{ x\in X:(x,0)\in A\}\]
 \[   \mul A  =\{  y\in Y: (0,y) \in A \}.\]
 We write $y\in Ax$ if $(x,y)\in A$, which is also equivalent to $(x,y+z)\in A$ for all $z\in \mul A$.  \\
  We say that $A$ is an \emph{operator} if $\mul A=\{0\}$.  In this case there exists a linear map $A_0:\dom A\to Y$ whose graph is $A$, i.e. \[A=\{(x,A_0 x):x\in\dom A\},\]
  and obviously $y\in Ax$ means that $y=A_0x$.  \\

  We denote by ${\mathcal L}(X,Y)$ (resp. ${\mathcal L}(X)$) the space of all linear continuous mappings $Q:X\to Y$ (resp. $Q:X\to X$). We will not identify $Q$ with its graph $\{(x,Qx):x\in X\}$. \\
  
  Given a relation $A$, we define the \emph{inverse relation} $A^{-1}$ by 
  \[ A^{-1} =\{  (y,x):(x,y)\in A\}. \]  
 Our first aim is to characterize when the inverse  of a closed relation is associated with a bounded operator.  
 \begin{prop}\label{prop:2.1}
 Let $A\subset X\times Y$ be a closed relation. The following assertions are equivalent:
 \begin{itemize}
 	\item[(i)] $\exists \alpha >0$ such that $\alpha \|x\|\leq \|y\|$ for all $(x,y)\in A$;
 	\item[(ii)] $\ker A =\{ 0\}$  and $\ran A$ is closed;
 	\item[(iii)] $\ran A$ is closed and  there exists  $Q\in \mathcal L (\ran A, X)$ such that $A^{-1} =\{ (y,Qy):y\in \ran A\}$.  
 \end{itemize}
  \end{prop}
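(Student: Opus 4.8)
The plan is to prove the cycle $(i)\Rightarrow(ii)\Rightarrow(iii)\Rightarrow(i)$, organizing everything around the inverse relation $A^{-1}$, which is closed in $Y\times X$ because $A$ is closed in $X\times Y$ and $(x,y)\mapsto(y,x)$ is a homeomorphism. The guiding observation is that the condition $\ker A=\{0\}$ is precisely what makes $A^{-1}$ single-valued: if $(x_1,y),(x_2,y)\in A$ then $(x_1-x_2,0)\in A$ by linearity, so $x_1-x_2\in\ker A$. Hence, whenever $\ker A=\{0\}$, there is a well-defined linear map $Q\colon\ran A\to X$ with $A^{-1}=\{(y,Qy):y\in\ran A\}$, and the three conditions are in essence three ways of expressing that such a $Q$ exists as a bounded operator, together with closedness of its domain $\ran A$.

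For $(i)\Rightarrow(ii)$ I would first note that $(i)$ forces $\ker A=\{0\}$, since $(x,0)\in A$ gives $\alpha\|x\|\le 0$. To see that $\ran A$ is closed, take $y_n\to y$ with $y_n\in\ran A$ and choose $x_n$ with $(x_n,y_n)\in A$; applying $(i)$ to the differences $(x_n-x_m,y_n-y_m)\in A$ gives $\alpha\|x_n-x_m\|\le\|y_n-y_m\|$, so $(x_n)$ is Cauchy and converges to some $x\in X$. Since $A$ is closed, $(x,y)\in A$, whence $y\in\ran A$.

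The heart of the argument is $(ii)\Rightarrow(iii)$, and this is the step I expect to be the main obstacle, namely establishing that $Q$ is bounded. Since $\ker A=\{0\}$, the map $Q\colon\ran A\to X$ is well-defined and linear, and its graph is exactly $A^{-1}$, which is closed. As $\ran A$ is a closed subspace of the Banach space $Y$, it is itself a Banach space, and $X$ is Banach as well; the closed graph theorem then applies to the everywhere-defined closed operator $Q$ and yields $Q\in\mathcal L(\ran A,X)$. This is the only point where completeness and the hypothesis that $\ran A$ be closed are genuinely used. The remaining implication $(iii)\Rightarrow(i)$ is then immediate: for $(x,y)\in A$ we have $x=Qy$, hence $\|x\|=\|Qy\|\le\|Q\|\,\|y\|$, so the choice $\alpha=1/\max(\|Q\|,1)>0$ gives $\alpha\|x\|\le\|y\|$ for all $(x,y)\in A$.
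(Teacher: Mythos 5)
Your proposal is correct and follows essentially the same route as the paper: the cycle $(i)\Rightarrow(ii)\Rightarrow(iii)\Rightarrow(i)$, with the Cauchy-sequence argument for closedness of the range, the closed graph theorem applied to $Q$ on the Banach space $\ran A$, and the norm estimate $\|x\|\le\|Q\|\,\|y\|$ for the last step. Your choice $\alpha=1/\max(\|Q\|,1)$ is in fact a slightly cleaner way to handle the degenerate case than the paper's distinction on whether $Y=\{0\}$.
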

\begin{proof}
	$(i)\Rightarrow (ii)$ By (i), $x=0$ whenever $(x,0)\in A$. In other words $\ker A=\{ 0\}$. 
	Let $(y_n)_n\subset \ran A$, $y\in Y$ such that $y=\lim_{n\to\infty} y_n$. Then there exists $(x_n)_n\subset X$ such that $(x_n,y_n)\in A$ for all $n$. Moreover,  by (i), $(x_n)_n$ is a Cauchy sequence.
	 Therefore there exists $x\in X$ such that $\lim_n x_n=x$.    It follows that  $(x,y)=\lim_{n\to\infty} (x_n,y_n)\in \overline{A}=A$ and then $y\in \ran A$, i.e. $\ran A$ is closed. \\
	$(ii)\Rightarrow (iii)$ Since $\ker A =\{0\}$, for each $y\in \ran A$ there exists a unique $Qy\in X$ such that $(Qy,y)\in A$. Then $Q$ is a linear map from $\ran A$ to $X$ and $A=\{(Qy,y):y\in \ran A\}$, i.e. $A^{-1}= \{(y,Qy):y\in \ran A\}$. Since $A$ is closed, the graph of $Q$ is closed and consequently $Q\in {\mathcal L}(\ran A,X)$. \\
	$(iii)\Rightarrow (i)$ Let $(x,y)\in A$. Then $x=Qy$ and then $\|x\|\leq \|Q\| \|y\|$. If $Y\neq \{0\}$, we can choose $\alpha:=\|Q\|^{-1}$. If $Y=\{0\}$,  then $X=\{0\}$ and we may choose an arbitrary positive constant $\alpha$. 
\end{proof}
We call a  relation $A\subset X\times Y$ \emph{invertible} if $A$ is closed, $\ker A=\{0\}$ and $\ran A=Y$. Thus, by Propostion~\ref{prop:2.1}, $A$ is invertible if and only if there exists $Q\in {\mathcal L}(Y,X)$ such that  $A^{-1}=\{(y,Qy):y\in Y\}$. \\

Let $A$ be a relation in $X\times Y$. Then  the \emph{adjoint relation} $A'\subset Y'\times X'$ is defined by 
\[  A'=\{ (y',x'):  \langle x',x\rangle =\langle y',y\rangle \mbox{ for all } (x,y)\in A\}.\]
We identify $(X\times Y)'$ and $X'\times Y'$. 

If $Z$ is a Banach space, for a subspace $V\subset Z$ we let 
\[  V^\perp :=\{  z'\in Z':\langle z',v\rangle =0 \mbox{ for all }v\in V\}. \]
We also define, for $W\subset Z'$, 
\[ {}^\perp W :=\{ z\in Z,\langle w',z\rangle =0 \mbox{ for all }w'\in W\}.\] 
 We can now describe $A'$ as 
 \[ A'=(-A^{-1})^\perp.\]
 Note that $A'$ is a closed subspace in $Y'\times X'$.  It follows from the definitions that 
 \[  (A^{-1})'=(A')^{-1}.\]
 Our aim is to describe the range and kernel by dual expressions. 
 \begin{prop}\label{prop:2.2}
 Let $A\subset X\times Y$ be a closed relation. Then 
 \begin{enumerate}
 \item[(a)] $\ker A'=(\ran A)^\perp$.
 \item[(b)]	$\ker A={}^\perp (\ran A')$.
 \end{enumerate}
 \end{prop}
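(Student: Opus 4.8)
The plan is to obtain (a) by simply unwinding the definitions, and to reserve the Hahn--Banach theorem for (b). For (a), an element $y'\in Y'$ lies in $\ker A'$ exactly when $(y',0)\in A'$, which by the definition of the adjoint relation means $\langle y',y\rangle=\langle 0,x\rangle=0$ for every $(x,y)\in A$; since the second components of pairs in $A$ range precisely over $\ran A$, this says $y'\in(\ran A)^\perp$. Thus (a) holds, and notably it requires nothing about closedness.

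For (b), I would first dispose of the inclusion $\ker A\subset {}^\perp(\ran A')$, which is again purely formal: if $x\in\ker A$, so that $(x,0)\in A$, and if $x'\in\ran A'$ with $(y',x')\in A'$, then applying the defining identity of $A'$ to the pair $(x,0)$ gives $\langle x',x\rangle=\langle y',0\rangle=0$. As $x'\in\ran A'$ was arbitrary, this shows $x\in{}^\perp(\ran A')$.

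The reverse inclusion ${}^\perp(\ran A')\subset\ker A$ is the crux, and it is where the closedness of $A$ enters. I would argue by contraposition: suppose $x\notin\ker A$, i.e. $(x,0)\notin A$. Since $A$ is a closed subspace of $X\times Y$, the Hahn--Banach theorem furnishes a functional $(x',y')\in X'\times Y'=(X\times Y)'$ that annihilates $A$ yet satisfies $\langle x',x\rangle=\langle(x',y'),(x,0)\rangle\neq 0$. The key observation is that the condition that $(x',y')$ annihilates $A$ reads $\langle x',u\rangle+\langle y',v\rangle=0$, i.e. $\langle x',u\rangle=\langle -y',v\rangle$, for all $(u,v)\in A$; comparing this with the definition of the adjoint, it is exactly the statement that $(-y',x')\in A'$, whence $x'\in\ran A'$. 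But then $x\in{}^\perp(\ran A')$ would force $\langle x',x\rangle=0$, contradicting the choice of the separating functional, so $x\in\ker A$ after all. The one delicate point to get right is precisely this translation of ``$(x',y')$ annihilates $A$'' into membership in $A'$ (including the sign carried by $y'$), together with the fact that the point $(x,0)$ can genuinely be separated from $A$, for which the closedness of $A$ is indispensable.
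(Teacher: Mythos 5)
Your proof is correct and follows essentially the same route as the paper: (a) by unwinding definitions, the easy inclusion of (b) by direct computation, and the reverse inclusion by separating $(x,0)$ from the closed subspace $A$ via Hahn--Banach and observing that an annihilating functional $(x',y')$ of $A$ yields $(-y',x')\in A'$. The sign bookkeeping you flag as the delicate point is handled identically in the paper.
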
   
\begin{proof}
(a) follows directly from the definitions.\\
(b) "$\subset$" Let $u\in \ker A$, i.e. $(u,0)\in A$. Let $(y',x')\in A'$. Then $\langle y',y\rangle =\langle x',x\rangle $ for all 
$(x,y)\in A$.  Thus $\langle x',u\rangle =0$. \\
"$\supset$" Let  $u\in X$, $u\not\in \ker A$. Then $(u,0)\not\in A$. By the Hahn--Banach  Theorem there exists $(x'_0, y'_0)\in A^\perp$ such that $\langle x'_0,u\rangle \neq 0$. Since $\langle x'_0,x\rangle +\langle y'_0,y\rangle =0$ for all $(x,y)\in A$,  it follows that  $(-y'_0,x'_0)\in A'$. Therefore $x'_0\in \ran A'$ and thus $u\not\in {}^\perp (\ran A')$.  \\
 "$\subset$" Let $(x,0)\in A$, $x'\in \ran A'$. There exists $y'\in Y'$ such that  $(y',x')\in A'$. Thus $\langle x',x\rangle=0$. We have sgown that $x\in {}^\perp(\ran A')$.  
 \end{proof} 
  A remarkable fact is that $\ran A$ is closed if and only if $\ran A'$ is closed. when $A$ is a closed relation.  For the proof  we follow \cite{Bre11} where $A$ is a densely defined operator. 
  
  The basic result is \cite[Theorem 2.16]{Bre11}. 
  \begin{thm}\label{th:2.3}
  	Let $G$ and $L$ be two closed subspaces of a Banach space $E$. The following assertions are equivalent:
  	\begin{itemize}
  		\item[(a)] $G+L$ is closed in $E$;
  		\item[(b)] $G^\perp + L^\perp$ is closed in $E'$. 
  	\end{itemize} 
  \end{thm}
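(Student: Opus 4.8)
The plan is to deduce Theorem~\ref{th:2.3} from the closed range theorem for bounded operators between Banach spaces, which asserts that such an operator has closed range if and only if its adjoint does. The point is to choose an operator $S$ for which $\ran S$ encodes the closedness of $G+L$ while $\ran S'$ is \emph{exactly} $G^\perp+L^\perp$. The naive choice $G\times L\to E$, $(g,\ell)\mapsto g+\ell$, is unsatisfactory: its adjoint produces pairs of \emph{restrictions} living in the quotient duals rather than annihilators, and chasing the identification back to $G^\perp+L^\perp$ would drag in the bidual. Instead I would work with the quotient maps.

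First I would introduce $S\in {\mathcal L}\big(E,(E/G)\times(E/L)\big)$ defined by $Sx=(x+G,\,x+L)$, which is bounded since $G$ and $L$ are closed (so the quotients are Banach spaces). Using the canonical isometric identifications $(E/G)'=G^\perp$ and $(E/L)'=L^\perp$, a direct computation gives $S'(\psi,\chi)=\psi+\chi$ for $(\psi,\chi)\in G^\perp\times L^\perp$, so that $\ran S'=G^\perp+L^\perp$. By the closed range theorem, $\ran S'$ is closed if and only if $\ran S$ is closed; hence it remains to show that $\ran S$ is closed in $(E/G)\times(E/L)$ if and only if $G+L$ is closed in $E$.

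This last equivalence is the only genuinely new work, and I would isolate it as a lemma about the diagonal image $\Delta=\{(z+P,\,z+Q):z\in Z\}$ of a Banach space $Z$ into $(Z/P)\times(Z/Q)$ for closed subspaces $P,Q$: namely $\Delta$ is closed if and only if $P+Q$ is closed. Applied with $Z=E$, $P=G$, $Q=L$ this is precisely what is needed, since $\ran S=\Delta$. For "$P+Q$ closed $\Rightarrow\Delta$ closed" I would take a convergent sequence $(z_n+P,z_n+Q)\to(\alpha,\beta)$, choose representatives $\alpha=a+P$ and $\beta=b+Q$, and observe that $\dist(a-b,P+Q)\le \dist(a-z_n,P)+\dist(z_n-b,Q)\to 0$, so that $a-b\in\overline{P+Q}=P+Q$ yields a $z$ with $(z+P,z+Q)=(\alpha,\beta)$. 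For the converse I would note that $\{0\}\times\big((P+Q)/Q\big)=\Delta\cap\big(\{0\}\times(Z/Q)\big)$ is closed, whence $(P+Q)/Q$ is closed in $Z/Q$ and therefore $P+Q$, being its preimage under the continuous quotient map $Z\to Z/Q$, is closed.

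The main obstacle is conceptual rather than computational: it lies in setting up the operator so that duality sends the sum $G+L$ to the annihilator sum $G^\perp+L^\perp$ \emph{without} passing through the bidual. Once $S$ is chosen the closed range theorem does the heavy lifting, and the diagonal-image lemma is entirely elementary (no open mapping theorem is needed). The one step to carry out with care is the identification $(E/G)'=G^\perp$ together with the fact that under it the adjoint of the quotient map $E\to E/G$ becomes the inclusion $G^\perp\hookrightarrow E'$; this is exactly what makes $\ran S'=G^\perp+L^\perp$ come out on the nose.
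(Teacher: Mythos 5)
Your argument is correct: the operator $S\colon E\to (E/G)\times(E/L)$, $Sx=(x+G,x+L)$, does satisfy $\ran S'=G^\perp+L^\perp$ under the isometric identifications $(E/G)'\cong G^\perp$, $(E/L)'\cong L^\perp$, and your diagonal-image lemma (both directions, including the estimate $\dist(a-b,G+L)\le\dist(a-z_n,G)+\dist(z_n-b,L)$ and the identification $\Delta\cap(\{0\}\times(E/L))=\{0\}\times((G+L)/L)$) is sound and genuinely elementary. Note, however, that the paper does not prove this statement at all: it is quoted verbatim from Brezis \cite[Theorem 2.16]{Bre11}, whose own proof runs through a quantitative decomposition lemma (existence of a constant $C$ such that every $z\in G+L$ splits as $x+y$ with $\|x\|,\|y\|\le C\|z\|$) rather than through any closed range theorem. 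So your route is different from the cited one, and it is arguably cleaner once the right operator $S$ is on the table.

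The one point you must handle with care is the logical direction of the closed range theorem. In Brezis, the closed range theorem (his Theorem 2.19) is \emph{deduced from} Theorem 2.16 by exactly the graph-plus-$E\times\{0\}$ trick that the present paper reuses for relations in Theorem~\ref{th:2.4}; likewise the paper's own Theorem~\ref{th:2.4} is a consequence of Theorem~\ref{th:2.3}. So you cannot invoke either of those sources for your key step without circularity. You need the closed range theorem for bounded, everywhere-defined operators established independently of the two-subspace statement; such proofs exist (e.g.\ Rudin, \emph{Functional Analysis}, Theorem 4.14, or Kato, Theorem IV.5.13, both of which argue directly from the open mapping theorem). With that citation made explicit, your proof is complete.
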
  
From this we deduce the following. 
  \begin{thm}\label{th:2.4}[Closed range theorem]
  	Let $A\subset X\times Y$ be a closed relation. The following assertions are equivalent:
  \begin{itemize}
  	\item[(i)]  $\ran A$ is closed;
  	\item[(ii)]  $\ran A'$ is closed. 
  \end{itemize} 	 
  \end{thm}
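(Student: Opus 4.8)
The plan is to reduce Theorem~\ref{th:2.4} to the subspace result Theorem~\ref{th:2.3} by encoding each range condition as the closedness of a sum of two closed subspaces of a suitable product space. Concretely, I would work in $E=X\times Y$ and choose $G=A$, which is closed by hypothesis, together with $L=X\times\{0\}$. The first key observation is the identity $G+L=X\times \ran A$: adding an arbitrary $(x_0,0)\in L$ to a pair $(x,y)\in A$ makes the first coordinate range over all of $X$ while the second coordinate ranges exactly over $\ran A$. Consequently $G+L$ is closed in $X\times Y$ if and only if $\ran A$ is closed in $Y$, which takes care of assertion (i).

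Next I would translate everything to the dual side. Applying Theorem~\ref{th:2.3} to $G$ and $L$, the sum $G+L$ is closed if and only if $G^\perp+L^\perp$ is closed in $E'=X'\times Y'$. Here $L^\perp=\{0\}\times Y'$ and $G^\perp=A^\perp$. I would then use the relation between $A^\perp$ and the adjoint $A'$ already recorded in the excerpt, namely that $(x',y')\in A^\perp$ if and only if $(-y',x')\in A'$; thus $(x',y')\mapsto(-y',x')$ is a linear isomorphism of $A^\perp$ onto $A'$ sending the first ($X'$) coordinate of an element of $A^\perp$ to the $X'$-coordinate of the corresponding element of $A'$. Since in $A^\perp+(\{0\}\times Y')$ the $Y'$-component is completely free, the sum equals $\bigl(\pi_{X'}(A^\perp)\bigr)\times Y'=\ran A'\times Y'$, where $\pi_{X'}$ denotes projection onto the first factor.

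Finally I would note that a product $S\times Y'$ with $S$ a subspace of $X'$ is closed in $X'\times Y'$ if and only if $S$ is closed in $X'$, since the slice $x'\mapsto(x',0)$ recovers $S$ as the preimage of a closed set. Chaining the equivalences gives: $\ran A$ closed $\iff$ $G+L$ closed $\iff$ $G^\perp+L^\perp$ closed $\iff$ $\ran A'$ closed, which is the assertion. I expect the only delicate point to be the bookkeeping in the dual computation, in particular verifying $\pi_{X'}(A^\perp)=\ran A'$ with the correct sign and coordinate swap between $A^\perp$ and $A'$. A useful safeguard is that I never need to assert that the projection of a closed subspace is closed: $G^\perp+L^\perp$ is \emph{literally} the product $\ran A'\times Y'$, whose closedness is equivalent to that of $\ran A'$ directly, with no projection argument required. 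The verification of $G+L=X\times\ran A$ and of the two perp computations is then routine.
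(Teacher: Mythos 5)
Your proposal is correct and follows exactly the paper's own argument: the paper also sets $G=A$, $L=X\times\{0\}$ in $E=X\times Y$, notes $G+L=X\times\ran A$ and $G^\perp+L^\perp=\ran A'\times Y'$, and invokes Theorem~\ref{th:2.3}. The only difference is that you spell out the "easy to see" identities (in particular the bookkeeping identifying $\pi_{X'}(A^\perp)$ with $\ran A'$ via the coordinate swap between $A^\perp$ and $A'$), which the paper leaves to the reader; your verification is accurate.
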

  \begin{proof}
  	Let $Z=X\times Y$, $Z'=X'\times Y'$,  $G=A$ and $L=X\times \{0\}$. Then $X\times \ran A=G+L$ and $\ran A'\times Y'=G^\perp +L^\perp$ as it is easy to see. So the claim follows from Theorem~\ref{th:2.3}.  
  \end{proof}
  Now we obtain the following characterizations of sujectivity of a closed relation.
  \begin{thm}\label{th:2.5}
  Let $A$ be a closed relation in $X\times Y$. The following assertions are equivalent:
 \begin{itemize}
\item[(i)] $\ran A=Y$;
\item[(ii)] there exists $\alpha>0$ such that $\alpha \|y'\|\leq \|x'\|$ for all $(y',x')\in A'$; 
\item[(iii)] $ker A'=\{ 0\} $ and $\ran A'$ is closed. 
\end{itemize}
  \end{thm}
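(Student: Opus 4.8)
The plan is to reduce everything to the results already established for $A$ and to their duals, using the fact—noted just above the statement—that $A'$ is itself a closed relation in $Y'\times X'$. The equivalence of (ii) and (iii) then requires no new work: applying Proposition~\ref{prop:2.1} to the closed relation $A'$, with the roles of $X$ and $Y$ now played by $Y'$ and $X'$ respectively, condition (ii) is precisely assertion (i) of that proposition for $A'$ (the pairs of $A'$ being written $(y',x')$, so the inequality reads $\alpha\|y'\|\le\|x'\|$), while condition (iii) is precisely assertion (ii) of that proposition for $A'$. Hence $(ii)\Leftrightarrow(iii)$ is immediate.

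It therefore suffices to prove $(i)\Leftrightarrow(iii)$, and here the two tools are Proposition~\ref{prop:2.2}(a), which identifies $\ker A'=(\ran A)^\perp$, and the closed-range Theorem~\ref{th:2.4}, which transfers closedness of $\ran A$ to and from closedness of $\ran A'$. For $(i)\Rightarrow(iii)$ I would argue: if $\ran A=Y$ then $(\ran A)^\perp=Y^\perp=\{0\}$, so $\ker A'=\{0\}$ by Proposition~\ref{prop:2.2}(a); and since $\ran A=Y$ is trivially closed, Theorem~\ref{th:2.4} yields that $\ran A'$ is closed.

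For the converse $(iii)\Rightarrow(i)$ the two hypotheses are used separately. From $\ker A'=\{0\}$ together with Proposition~\ref{prop:2.2}(a) we get $(\ran A)^\perp=\{0\}$; by Hahn--Banach a subspace whose annihilator is trivial is dense, i.e. $\overline{\ran A}=\,^\perp\{0\}=Y$, so $\ran A$ is dense in $Y$. From $\ran A'$ closed and Theorem~\ref{th:2.4} we obtain that $\ran A$ is closed. A dense closed subspace of $Y$ equals $Y$, whence $\ran A=Y$.

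The only genuinely delicate point I expect is the application of Proposition~\ref{prop:2.1} to $A'$: one must keep careful track of the swapped roles of domain and codomain and of the ordering of the pairs in $A'=\{(y',x'):\dots\}$, so that the resulting norm inequality comes out as $\alpha\|y'\|\le\|x'\|$ and not the reverse. Everything else is a direct combination of Proposition~\ref{prop:2.2}(a) and the closed-range theorem, together with the elementary duality facts that $Y^\perp=\{0\}$ and $^\perp\{0\}=Y$.
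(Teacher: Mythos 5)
Your proposal is correct and follows essentially the same route as the paper: $(ii)\Leftrightarrow(iii)$ via Proposition~\ref{prop:2.1} applied to the closed relation $A'$, and $(i)\Leftrightarrow(iii)$ via Proposition~\ref{prop:2.2}(a) combined with the closed range Theorem~\ref{th:2.4}. The only cosmetic difference is that in $(iii)\Rightarrow(i)$ you phrase the conclusion as ``dense plus closed equals $Y$'' where the paper writes $\ran A={}^\perp((\ran A)^\perp)=Y$; these are the same argument.
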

\begin{proof}
$(ii)\Longleftrightarrow (iii)$ follows from Proposition~\ref{prop:2.1}. \\
$(i)\Rightarrow (iii)$ By Theorem~\ref{th:2.4}, $\ran A'$ is closed. Moreover, by Proposition~\ref{prop:2.2}, 
$ \ker A' =(\ran A)^\perp=Y^\perp =\{0\}$. \\
$(iii)\Rightarrow (i)$  By Proposition~\ref{prop:2.2}, $(\ran A)^\perp=\ker A'=\{0\}$.  Theorem~\ref{th:2.4}  implies that $\ran A$ is closed. Thus $\ran A={ }^\perp ((\ran A)^\perp )=Y$.  
	
\end{proof}
 We also note the following dual version whose proof is omitted since it goes along the same lines
  \begin{thm}\label{th:2.6}
  	Let $A$ be a closed relation. The following assertions are equivalent:
  	\begin{enumerate}
  		\item[(i)]  $\ran A'=X'$;
  		\item[(ii)] $\exists \alpha >0$ such that $\alpha \|x\|\leq \|y\|$ for all $(x,y)\in A$;
  		\item[(iii)] $\ker A=\{0\}$ and $\ran A$ is closed. 
  	\end{enumerate}  
  \end{thm}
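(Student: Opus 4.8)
The plan is to prove the three implications $(i)\Rightarrow(iii)$, $(iii)\Rightarrow(i)$ together with the equivalence $(ii)\Leftrightarrow(iii)$, reusing the duality machinery already developed. The equivalence $(ii)\Leftrightarrow(iii)$ requires nothing new: it is precisely the equivalence of conditions (i) and (ii) in Proposition~\ref{prop:2.1}, applied to $A$ itself (and not to $A'$, as was done for Theorem~\ref{th:2.5}).

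For $(i)\Rightarrow(iii)$ I would simply mirror the corresponding step of Theorem~\ref{th:2.5} with the roles of $A$ and $A'$ interchanged. Assuming $\ran A'=X'$, Proposition~\ref{prop:2.2}(b) gives $\ker A={}^\perp(\ran A')={}^\perp X'=\{0\}$, since the only vector annihilated by all of $X'$ is $0$. Moreover $\ran A'=X'$ is norm-closed, so the closed range theorem (Theorem~\ref{th:2.4}) yields that $\ran A$ is closed, which is (iii).

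The delicate direction is $(iii)\Rightarrow(i)$, and this is where I expect the main obstacle. The naive move is to transcribe the $(iii)\Rightarrow(i)$ argument of Theorem~\ref{th:2.5} verbatim: from $\ker A=\{0\}$ and Proposition~\ref{prop:2.2}(b) one obtains ${}^\perp(\ran A')=\{0\}$, from Theorem~\ref{th:2.4} that $\ran A'$ is closed, and one would like to conclude $\ran A'=\bigl({}^\perp(\ran A')\bigr)^\perp=X'$. This fails in general, however, because the bipolar identity $({}^\perp W)^\perp$ recovers only the weak$^*$-closure of a subspace $W\subset X'$, whereas Theorem~\ref{th:2.4} supplies merely norm-closedness of $\ran A'$; a norm-closed but weak$^*$-dense proper subspace of a non-reflexive dual (for instance a hyperplane $\ker\varphi\subset X'$ determined by some $\varphi\in X''\setminus X$) shows the gap is genuine.

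To circumvent this I would argue directly by Hahn--Banach instead of by a bipolar computation. Passing from (iii) to (ii) through Proposition~\ref{prop:2.1} in fact furnishes a bounded operator $Q\in{\mathcal L}(\ran A,X)$ with $A=\{(Qy,y):y\in\ran A\}$ and $\ran A$ closed in $Y$. Given an arbitrary $x'\in X'$, the functional $y\mapsto\langle x',Qy\rangle$ is linear and bounded on the closed subspace $\ran A$, with norm at most $\|x'\|\,\|Q\|$, so by Hahn--Banach it extends to some $y'\in Y'$. Then for every $(x,y)\in A$ one has $x=Qy$, whence $\langle y',y\rangle=\langle x',Qy\rangle=\langle x',x\rangle$, i.e. $(y',x')\in A'$ and $x'\in\ran A'$. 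As $x'$ was arbitrary, $\ran A'=X'$, giving (i). The decisive point, and the true departure from the proof of Theorem~\ref{th:2.5}, is exactly this replacement of the dual bipolar step by an explicit Hahn--Banach extension built from the bounded inverse $Q$.
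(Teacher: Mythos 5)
Your proof is correct, and it does something the paper does not: the paper omits the proof of Theorem~\ref{th:2.6} entirely, asserting it ``goes along the same lines'' as Theorem~\ref{th:2.5}, whereas you have correctly identified that the one substantive implication, $(iii)\Rightarrow(i)$, does \emph{not} mirror verbatim. In Theorem~\ref{th:2.5} the final step $\ran A={}^\perp\bigl((\ran A)^\perp\bigr)$ is the bipolar identity for a norm-closed subspace of the \emph{predual} space $Y$, which is a clean Hahn--Banach consequence; the mirrored step $\ran A'=\bigl({}^\perp(\ran A')\bigr)^\perp$ would require $\ran A'$ to be weak$^*$-closed, while Theorem~\ref{th:2.4} delivers only norm-closedness, and your hyperplane example ($\ker\varphi$ for $\varphi\in X''\setminus X$) shows that norm-closed plus weak$^*$-dense does not force equality with $X'$. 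Your replacement argument is sound: from $(iii)$, Proposition~\ref{prop:2.1} gives a bounded $Q$ with $A=\{(Qy,y):y\in\ran A\}$, and for any $x'\in X'$ a Hahn--Banach extension $y'$ of $y\mapsto\langle x',Qy\rangle$ satisfies $\langle y',y\rangle=\langle x',x\rangle$ for all $(x,y)\in A$, so $(y',x')\in A'$ and $x'\in\ran A'$. (Your treatment of $(i)\Rightarrow(iii)$ and $(ii)\Leftrightarrow(iii)$ is the straightforward dualization and matches what the paper intends.) The alternative way to close the gap would be to invoke the full Banach closed range theorem in the form $\ran A'$ closed $\Rightarrow\ \ran A'=(\ker A)^\perp$ (as in \cite[Theorem 2.19]{Bre11} for operators), but that is a strictly stronger statement than the paper's Theorem~\ref{th:2.4}; your Hahn--Banach construction is self-contained within the paper's toolkit and is, in effect, a direct proof of that identity in the case at hand.
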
	  
From Theorem~\ref{th:2.5} we deduce that surjectivity of closed relations is stable with respect to small perturbations by bounded operators. 

If $A\subset X\times Y$ is a closed relation and $B\in {\mathcal L}(X,Y)$, we define the relation $A+B\subset X\times Y$  by
\[ A+B:=\{  (x,y+Bx):(x,y)\in A\}.\]
Then $A+B$ is closed. Moreover, it is easy to see that
\begin{equation}\label{eq:2.1}
(A+B)' =A' + B'=\{   (y',x'+B' y'):(y',x')\in A'\},
\end{equation}	   
where $B'\in {\mathcal L}(Y',X')$ is the adjoint mapping of $B$. 
\begin{cor}\label{cor:2.7}
Let $A$ be a closed relation in $X\times Y$ such that $\ran A=Y$. Then there exists $\alpha >0$ such that $\ran (A+B)=Y$ whenever $B\in {\mathcal L}(X,Y)$, $\|B\|<\alpha$. 
\end{cor}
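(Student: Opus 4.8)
The plan is to pass to the adjoint and exploit the dual characterization of surjectivity provided by Theorem~\ref{th:2.5}. The reason is that the primal condition $\ran A=Y$ is not in any obvious way stable under perturbation, whereas its dual reformulation --- a coercivity-type lower bound on $A'$ --- visibly is. Concretely, since $\ran A=Y$, the implication $(i)\Rightarrow(ii)$ of Theorem~\ref{th:2.5} furnishes a constant $\alpha>0$ such that $\alpha\|y'\|\leq \|x'\|$ for all $(y',x')\in A'$. I claim this very $\alpha$ is the one asserted in the statement, and that $\|B\|<\alpha$ suffices.

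The next step is to describe the adjoint of the perturbed relation and to estimate on it. By formula~\eqref{eq:2.1}, every element of $(A+B)'$ has the form $(y',x'+B'y')$ with $(y',x')\in A'$, where $B'\in{\mathcal L}(Y',X')$ satisfies $\|B'\|=\|B\|$. For such an element the reverse triangle inequality combined with the bound from Theorem~\ref{th:2.5} gives
\[ \|x'+B'y'\|\geq \|x'\|-\|B'\|\,\|y'\|\geq \alpha\|y'\|-\|B\|\,\|y'\|=(\alpha-\|B\|)\|y'\|. \]
Thus, whenever $\|B\|<\alpha$, setting $\alpha':=\alpha-\|B\|>0$ we obtain $\alpha'\|y'\|\leq \|x'+B'y'\|$ for all elements $(y',x'+B'y')\in(A+B)'$; that is, $A+B$ satisfies condition $(ii)$ of Theorem~\ref{th:2.5} with constant $\alpha'$.

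Finally I would close the argument by going back to the primal side. The relation $A+B$ is closed (as noted before~\eqref{eq:2.1}), so the implication $(ii)\Rightarrow(i)$ of Theorem~\ref{th:2.5} applies and yields $\ran(A+B)=Y$, which is exactly the claim. There is no serious obstacle here: the whole content is the decision to work on the dual, and the only points requiring care are the correct identification of $(A+B)'$ through~\eqref{eq:2.1} and the routine fact that $\|B'\|=\|B\|$. Everything else is the elementary perturbation estimate displayed above.
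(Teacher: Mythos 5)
Your proof is correct and follows essentially the same route as the paper: pass to the adjoint via \eqref{eq:2.1}, use Theorem~\ref{th:2.5} $(i)\Rightarrow(ii)$ to get the lower bound $\alpha\|y'\|\leq\|x'\|$ on $A'$, derive $(\alpha-\|B\|)\|y'\|\leq\|x'+B'y'\|$ on $(A+B)'$, and conclude by $(ii)\Rightarrow(i)$. The only cosmetic difference is that you invoke the reverse triangle inequality directly where the paper rearranges the ordinary triangle inequality; the resulting estimate is identical.
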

\begin{proof}
	By Theorem~\ref{th:2.5} there exists $\alpha>0$ such that $\alpha \|y'\|\leq \|x'\|$ for all $(y',x')\in A'$.  Let $B\in {\mathcal L}(X,Y)$ such that $\|B\|<\alpha$. Then $\|B'\|<\alpha$.  For $(y',x'+B'y')\in A'+B'=(A+B)'$ with $(y',x')\in A'$, we have
	\[    \|y'\|\leq \frac{1}{\alpha} \|x'\|\leq \frac{1}{\alpha} \|x'+B'y'\| +\frac{1}{\alpha} \|B'y'\| \leq \frac{1}{\alpha} \|x'+B'y'\| +\frac{\|B'\|}{\alpha} \|y'\|.\]  
	Thus 
	\[  (\alpha -\|B'\|)\|y'\|\leq \|x'+B' y'\|\]
	for all $(y',x'+B'y')\in (A+B)'$.  It follows from Theorem~\ref{th:2.5} that $\ran (A+B)=X$. 
\end{proof}

\section{Resolvents of relations}\label{sec:3}
In this section we introduce some spectral properties of relations on Banach spaces. Some of them are also treated in the monograph \cite[Section 6, 1.2 and 1.9]{BHS20} in the Hilbert space case. Our notations are slightly different in order to fit better with semigroup theory. We give some proofs to be complete. Let $X$ be a Banach space over $\K=\R$ or $\C$ and let $A\subset X\times X$ be a closed (linear) relation.   \\

For $\lambda\in\K$ the relation $\lambda -A$ is defined by 
\[  \lambda -A=\{(x,\lambda x-y):(x,y)\in A\}.\]
Recall that $\lambda -A$ is called invertible if $\ker (\lambda -A)=\{0\}$ and $\ran (\lambda -A)=X$. \\

The \emph{resolvent set} $\rho(A)$ of $A$ is defined as 
\[  \rho (A):=\{ \lambda\in \K :\lambda -A\mbox{ is invertible}\}.\]
By Proposition~\ref{prop:2.1},   for each $\lambda\in \rho(A)$ there exists a unique mapping $R(\lambda, A)
\in{\mathcal  L}(X)$ such that 
\[   (\lambda -A)^{-1}=\{(y,R(\lambda, A)y):y\in X)\}.\]
We call $R(\lambda, A)$ the \emph{resolvent} of $A$ at $\lambda$. 

As in the case where $A$ is an operator, one has the following property 
(see also \cite[Section 1.2]{BHS20}) .   
\begin{prop}\label{prop:3.1}
Let $A$ be a relation. Let $\lambda_0\in \rho(A)$. Then for all $\lambda\in\K$ with 
\[  |\lambda-\lambda_0|\|R(\lambda_0,A)\|<1,\]
one has $\lambda\in\rho(A)$ and 
\begin{equation}\label{eq:3.1}
	R(\lambda ,A)=\sum_{n=0}^\infty (\lambda_0-\lambda)^n R(\lambda_0,A)^{n+1}.
\end{equation}  
Thus $\rho(A)$ is open, $R:\rho(A)\to{\mathcal L}(X)$, $\lambda\mapsto R(\lambda, A)$ is analytic and 
\begin{equation}\label{eq:3.2}
\frac{R^{(n)}(\lambda)}{n!}=(-1)^nR(\lambda,A)^{n+1}\mbox{ for all }\lambda\in\rho(A),n\in\N_0.	
\end{equation}	
Finally, the \emph{resolvent identity} holds
\begin{equation}\label{eq:3.3}
	R(\lambda)-R(\mu)=(\mu-\lambda)R(\lambda)R(\mu)\mbox{ for all }\lambda,\mu\in \rho(A).
\end{equation}
\end{prop}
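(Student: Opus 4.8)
The plan is to reduce everything to the classical Neumann series for bounded operators by writing $\lambda-A$ as a bounded perturbation of $\lambda_0-A$. First I would observe that, in the notation of Section~\ref{sec:2},
\[ \lambda-A=(\lambda_0-A)+(\lambda-\lambda_0)\Id, \]
so $\lambda-A$ is a closed relation. The heart of the argument is to characterize membership in this relation through the bounded operator $R(\lambda_0,A)$: for $(u,v)\in X\times X$ one has $(u,v)\in\lambda-A$ precisely when $\bigl(u,v-(\lambda-\lambda_0)u\bigr)\in\lambda_0-A$, which by the definition of the resolvent means $u=R(\lambda_0,A)\bigl(v-(\lambda-\lambda_0)u\bigr)$. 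Rearranging, this says exactly that
\[ \bigl(\Id-(\lambda_0-\lambda)R(\lambda_0,A)\bigr)u=R(\lambda_0,A)\,v. \]

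Since $|\lambda-\lambda_0|\,\|R(\lambda_0,A)\|<1$, the operator $\Id-(\lambda_0-\lambda)R(\lambda_0,A)$ is invertible in $\LX$ with inverse $\sum_{n=0}^\infty(\lambda_0-\lambda)^nR(\lambda_0,A)^n$. Hence for every $v\in X$ the displayed equation has the unique solution $u=Sv$, where $S:=\sum_{n=0}^\infty(\lambda_0-\lambda)^nR(\lambda_0,A)^{n+1}\in\LX$, the series converging in operator norm. This simultaneously gives $\ker(\lambda-A)=\{0\}$ (take $v=0$) and $\ran(\lambda-A)=X$, so $\lambda-A$ is invertible; by Proposition~\ref{prop:2.1} we conclude $\lambda\in\rho(A)$ and $R(\lambda,A)=S$, which is \eqref{eq:3.1}. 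The openness of $\rho(A)$ follows at once, and, exhibiting $R(\cdot,A)$ locally as a norm-convergent power series in $\lambda-\lambda_0$, so does its analyticity; reading off the Taylor coefficients (equivalently, differentiating \eqref{eq:3.1} term by term) yields $R^{(n)}(\lambda_0)/n!=(-1)^nR(\lambda_0,A)^{n+1}$, and since $\lambda_0\in\rho(A)$ was arbitrary this is \eqref{eq:3.2}.

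Finally I would prove the resolvent identity \eqref{eq:3.3} directly from the linearity of $A$ as a subspace, which needs no proximity of $\lambda$ and $\mu$. For fixed $y\in X$, both pairs $\bigl(R(\lambda,A)y,\lambda R(\lambda,A)y-y\bigr)$ and $\bigl(R(\mu,A)y,\mu R(\mu,A)y-y\bigr)$ lie in $A$; subtracting them gives $(a,b)\in A$ with $a=R(\lambda,A)y-R(\mu,A)y$ and $b=\lambda R(\lambda,A)y-\mu R(\mu,A)y$. Then $(a,\lambda a-b)\in\lambda-A$, so $a=R(\lambda,A)(\lambda a-b)$, and since $\lambda a-b=(\mu-\lambda)R(\mu,A)y$ this reads $R(\lambda,A)y-R(\mu,A)y=(\mu-\lambda)R(\lambda,A)R(\mu,A)y$ for every $y$, which is \eqref{eq:3.3}.

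The only point requiring care is to argue throughout at the level of membership in the relations rather than manipulating $(\lambda-A)^{-1}$ as though $A$ were single-valued; once the membership equivalence of the first paragraph is in hand, the classical Neumann series and the linearity of the subspace $A$ do the rest, so I expect no substantial obstacle beyond this bookkeeping.
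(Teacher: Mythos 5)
Your proof is correct and follows essentially the same route as the paper: the first part is exactly the Neumann-series argument ``as for operators'' that the paper invokes by citation (your membership equivalence $(u,v)\in\lambda-A\iff(\Id-(\lambda_0-\lambda)R(\lambda_0,A))u=R(\lambda_0,A)v$ is the right way to carry it out for relations), and your derivation of the resolvent identity by subtracting the two elements $(R(\lambda,A)y,\lambda R(\lambda,A)y-y)$ and $(R(\mu,A)y,\mu R(\mu,A)y-y)$ of the subspace $A$ is the same idea as the paper's, which instead adds $(\mu-\lambda)R(\lambda,A)y$ to one membership relation and applies the characterization at $\mu$. No gaps.
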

\begin{proof}
The first part is shown as for operators (see also \cite[Corollary 1.2.7]{BHS20}). Now we show the resolvent identity \eqref{eq:3.3} (see also \cite[Theorem 1.2.6]{BHS20}).  Let $\lambda,\mu\in\rho(A)$ and $y\in X$. Then for $x\in X$, 
\begin{equation}\label{eq:3.4}
x=R(\lambda, A)y\mbox{ if and only if }y\in (\lambda -A)x=\lambda x -Ax.
\end{equation}
Thus $y\in \lambda R(\lambda,A)y-AR(\lambda,A)y$. Adding $(\mu-\lambda)R(\lambda, A)y$ gives 
\[  (\mu-\lambda) R(\lambda, A)y+y\in \mu R(\lambda ,A)y-AR(\lambda,A)y. \] 
Now \eqref{eq:3.4} implies that  
\[ R(\lambda, A)y=R(\mu,A)((\mu-\lambda)R(\lambda,A)y +y),\]
which is \eqref{eq:3.3}. 
\end{proof}    
As a corollary we note the following property which is well-known if $A$ is an operator. 
\begin{cor}\label{cor:3.2}
Let $\lambda_0\in \K$. Assume that there exists a sequence $(\lambda_n)_{n\geq 1}\subset \rho(A)$ such that $\lim_{n\to\infty} \lambda_n=\lambda_0$ and $\sup_{n\geq 1} \|R(\lambda_n,A)\|<\infty$. Then $\lambda_0\in\rho(A)$.   	
\end{cor}	
We also identify the multivalued space of $A$. 
\begin{lem}\label{lem:3.3n}
Let $A\subset X\times X$ be a closed relation and $\lambda\in \rho(A)$. Then for $x\in X$, $(0,x)\in A$ if and only if $R(\lambda, A)x=0$. 
\end{lem}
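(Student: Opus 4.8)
The plan is to use the resolvent description directly. Recall from Proposition~\ref{prop:3.1} that for $\lambda\in\rho(A)$ we have the characterization
\[
x=R(\lambda,A)y \quad\text{if and only if}\quad y\in(\lambda-A)x,
\]
which was already exploited as \eqref{eq:3.4} in the proof of the resolvent identity. I would take this as the engine of the argument and specialize it to the element $x$ in the statement.

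First I would prove the forward implication. Suppose $(0,x)\in A$. By the definition of $\lambda-A$, this means $(0,\lambda\cdot 0 - x)=(0,-x)\in\lambda-A$, i.e. $-x\in(\lambda-A)0$. Applying the characterization \eqref{eq:3.4} with $y=-x$ and the point $0$ in place of $x$, we get $R(\lambda,A)(-x)=0$, hence $R(\lambda,A)x=0$ by linearity. Conversely, suppose $R(\lambda,A)x=0$. Then taking $y=x$ in \eqref{eq:3.4}, the equality $R(\lambda,A)x=0$ says exactly that $0=R(\lambda,A)x$, so $x\in(\lambda-A)0=\lambda\cdot 0 - A0=-A0$. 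Unwinding the definition of $\lambda-A$, this gives a pair $(0,\lambda\cdot 0 - z)=(0,-z)$ with $z\in A0$ equal to $(0,x)$ after renaming, so $(0,-z)=(0,x)$ forces $-z=x$ with $z\in A0$, i.e. $(0,x)\in A$ since $A$ is a subspace and $A0=\mul A$ is closed under negation.

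I expect the main subtlety to be purely bookkeeping: keeping straight the shift between $A$ and $\lambda-A$ when the domain point is $0$, since then $\lambda x$ vanishes and the two relations have identical fibers over $0$ up to sign. Concretely, $(0,v)\in A\iff(0,-v)\in\lambda-A$ for every $v$, so the statement reduces to the assertion $(0,-x)\in\lambda-A\iff R(\lambda,A)x=0$, which is the $x=0$ case of \eqref{eq:3.4} together with linearity of $R(\lambda,A)$. There is no analytic content and no obstacle beyond this translation; the whole proof is a direct application of the equivalence \eqref{eq:3.4} already established, so I would keep it to a few lines.
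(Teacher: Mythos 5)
Your proof is correct: the paper omits the argument entirely (``This is not difficult to see''), and your direct unwinding of the definition of $\lambda-A$ at the domain point $0$ via the equivalence \eqref{eq:3.4} is exactly the intended routine verification. Both implications are handled correctly, including the sign bookkeeping and the final appeal to linearity of $A$ to pass from $(0,-x)\in A$ to $(0,x)\in A$.
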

This is not difficult to see. We also omit the easy proof of the next result.  
\begin{lem}\label{lem:3.3}  
\begin{enumerate}	
\item[(a)] Let $A\subset X\times X$ be a relation and $\lambda_0\in \rho(A)$. Then 
\[  A=\{  (R(\lambda_0,A)u,\lambda_0 R(\lambda_0,A)u-u):u\in X    \}.\]	
\item[(b)] Let $Q\in {\mathcal L}(X)$, $\lambda_0\in \K$. Then $A=\{  (Qu, \lambda_0Q u-u):u\in X\}$ is the unique relation such that $\lambda_0\in\rho (A)$ and  $R(\lambda_0,A)=Q$. 	
\end{enumerate}
\end{lem}
The following result is also given in \cite[Proposition 1.2.9]{BHS20}.
\begin{prop}\label{prop:3.4}
 Let $\Omega$ be a non-empty subset of $\K$ and $R:\Omega\to {\mathcal L}(X)$ a \emph{pseudo-resolvent} (i.e. $R$ satisfies $R(\lambda)-R(\mu)=(\mu-\lambda)R(\lambda) R(\mu)$ for all $\lambda,\mu\in \Omega$).  Then there exists a unique relation $A$ on $X$ such that $\Omega\subset \rho(A)$ and $R(\lambda)=R(\lambda, A)$ for all $\lambda\in \Omega$. 
	\end{prop}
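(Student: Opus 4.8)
The plan is to reduce everything to Lemma~\ref{lem:3.3}(b), which already settles the single-point situation, and then to show that the relation it produces does not depend on which point of $\Omega$ is used as a base point. Fix an arbitrary $\lambda_0\in\Omega$ and put $Q:=R(\lambda_0)\in{\mathcal L}(X)$. By Lemma~\ref{lem:3.3}(b) there is a unique relation
\[ A:=\{(Qu,\lambda_0 Qu-u):u\in X\} \]
with $\lambda_0\in\rho(A)$ and $R(\lambda_0,A)=R(\lambda_0)$. The entire content of the proposition is then to show that this single relation $A$ simultaneously realizes $R(\mu)$ as its resolvent at every other $\mu\in\Omega$, together with the uniqueness claim.

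Writing $A_\lambda:=\{(R(\lambda)u,\lambda R(\lambda)u-u):u\in X\}$ for $\lambda\in\Omega$, I would prove that $A_\lambda=A_\mu$ for all $\lambda,\mu\in\Omega$. To obtain $A_\lambda\subseteq A_\mu$, take $u\in X$ and search for $v\in X$ realizing $(R(\lambda)u,\lambda R(\lambda)u-u)=(R(\mu)v,\mu R(\mu)v-v)$. Matching the two coordinates (using the first to substitute into the second) forces the candidate $v:=u+(\mu-\lambda)R(\lambda)u$. One then verifies that $R(\mu)v=R(\lambda)u$ and, consequently, $\mu R(\mu)v-v=\lambda R(\lambda)u-u$, so the element lies in $A_\mu$. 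Interchanging the roles of $\lambda$ and $\mu$ gives the reverse inclusion, hence $A_\lambda=A_\mu$.

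The one computation on which everything hinges is the identity $R(\mu)v=R(\lambda)u$ for $v=u+(\mu-\lambda)R(\lambda)u$. Expanding yields $R(\mu)v=R(\mu)u+(\mu-\lambda)R(\mu)R(\lambda)u$, and the pseudo-resolvent relation, applied with $\lambda$ and $\mu$ interchanged, reads $(\mu-\lambda)R(\mu)R(\lambda)=R(\lambda)-R(\mu)$; substituting this collapses the right-hand side to $R(\lambda)u$. No separate commutativity argument is needed, since the defining identity used in both orderings does all the work. Once $A=A_{\lambda_0}=A_\mu$ is established for every $\mu\in\Omega$, Lemma~\ref{lem:3.3}(b) applied at $\mu$ gives $\mu\in\rho(A)$ and $R(\mu,A)=R(\mu)$, which is exactly $\Omega\subseteq\rho(A)$ with the prescribed resolvents.

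For uniqueness, suppose $B$ is any relation with $\Omega\subseteq\rho(B)$ and $R(\lambda,B)=R(\lambda)$ for all $\lambda\in\Omega$. Evaluating at the fixed point $\lambda_0$ gives $\lambda_0\in\rho(B)$ and $R(\lambda_0,B)=R(\lambda_0)=Q$, so the uniqueness clause of Lemma~\ref{lem:3.3}(b) forces $B=A$. I expect the only delicate point in the whole argument to be the coordinate-matching bookkeeping of the second paragraph that isolates the correct $v$; once the right candidate is written down, the verification is a direct application of the pseudo-resolvent identity, and the remainder is a routine appeal to the single-point result.
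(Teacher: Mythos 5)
Your proposal is correct and follows essentially the same route as the paper: fix $\lambda_0$, build $A$ from Lemma~\ref{lem:3.3}(b), and verify via the substitution $v=u+(\mu-\lambda)R(\lambda)u$ and the pseudo-resolvent identity that the same relation realizes $R(\mu)$ at every $\mu\in\Omega$ (the paper phrases this as computing $(\mu-A)^{-1}$ directly, which is an equivalent bookkeeping of the identical calculation). Your explicit uniqueness argument via the uniqueness clause of Lemma~\ref{lem:3.3}(b) is a welcome addition, as the paper leaves that step implicit.
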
 
\begin{proof}
Let $\lambda_0\in \Omega$ and define $A=\{  (R(\lambda_0)u,\lambda_0 R(\lambda_0) u-u):u\in X\}$. Then by Lemma~\ref{lem:3.3}, $\lambda_0\in\rho(A)$ and $R(\lambda_0,A)=R(\lambda_0)$. Let $\mu\in \Omega$. 
We claim that $(\mu-A)^{-1}=\{ (u,R(\mu)u):u\in X\}$. \\
"$\subset$" Note that  	
\[  (\mu -A)^{-1} =\{  ((\mu -\lambda_0)R(\lambda_0)x+x,R(\lambda_0)x):x\in X\}. \]
Let $(u,R(\lambda_0)x)\in (\mu-A)^{-1}$ where $u=R(\lambda_0)(\mu-\lambda_0)x +x$. Then 
\[  R(\mu) u=(\mu-\lambda_0)R(\mu)R(\lambda_0)x+R(\mu)x=R(\lambda_0)x-R(\mu)x +R(\mu) x=R(\lambda_0)x.\]
Thus $(u,R(\lambda_0)x)=(u,R(\mu)u)$. \\
"$\supset$" Let $u\in X$. We want to know that $(u,R(\mu)u)\in (\mu-A)^{-1}$. Define $x:=(\lambda_0-\mu)R(\mu)u +u$. Then 
\[ R(\lambda_0)x =(\lambda_0-\mu)R(\lambda_0)R(\mu)u+R(\lambda_0)u=R(\mu)u.\]
 Then we have 
\begin{eqnarray*}
	(\mu-\lambda_0)R(\lambda_0)x + x & = & (\mu-\lambda_0) R(\mu) u\ +  (\lambda_0 -\mu)R(\mu)u +u=u
 \end{eqnarray*}
Thus 
\[ (u,R(\mu) u)= ((\mu-\lambda_0)R(\lambda_0) x+x,R(\lambda_0)x)\in (\mu-A)^{-1}.\]
\end{proof}
We need the following lemma. 
\begin{lem}\label{lem:3.6new}
Let $A\subset X\times X$ be a closed relation. Let $U\subset \C$ be open, connected and $R:U\to {\mathcal L}(X)$ holomorphic. Assume that there exists an infinite compact set $K\subset U$ such that $K\subset \rho(A)$ and $R(\lambda)=R(\lambda,A)$ for all $\lambda \in K$. Then $U\subset \rho(A)$ and $R(\lambda )=R(\lambda, A)$ for all $\lambda\in U$.     
\end{lem}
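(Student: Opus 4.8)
The plan is to exploit the fact that, by Proposition~\ref{prop:3.1}, the resolvent $R(\cdot, A)$ is already holomorphic on the open set $\rho(A)$, and that $K \subset \rho(A)$ by hypothesis. The two functions $R(\cdot)$ and $R(\cdot, A)$ are both holomorphic where defined and agree on the infinite compact set $K$. Since $K$ is infinite and contained in the connected open set $U$, it has an accumulation point in $U$; the classical identity theorem for holomorphic functions (here, $\mathcal L(X)$-valued, so one applies it coordinatewise via functionals $\langle \cdot \, x, x'\rangle$ for $x \in X$, $x' \in X'$, or uses the vector-valued identity theorem directly) then forces $R(\cdot) = R(\cdot, A)$ on any connected open subset of $U \cap \rho(A)$ containing that accumulation point.

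The main work is thus to \emph{propagate} the inclusion $U \subset \rho(A)$, not merely the equality of the two functions. First I would set $V := \{\lambda \in U : \lambda \in \rho(A) \text{ and } R(\lambda) = R(\lambda, A)\}$. This set is nonempty (it contains $K$) and relatively open in $U$: indeed $\rho(A)$ is open by Proposition~\ref{prop:3.1}, and on the open set $U \cap \rho(A)$ the identity theorem guarantees that the agreement $R = R(\cdot, A)$ holds on the whole connected component meeting $K$, so $V$ contains a full neighborhood of each of its points. The crux is to show $V$ is also relatively closed in $U$.

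For closedness I would use Corollary~\ref{cor:3.2}: suppose $(\lambda_n) \subset V$ with $\lambda_n \to \lambda_0 \in U$. Because $R$ is holomorphic, hence continuous, on $U$, the values $R(\lambda_n) = R(\lambda_n, A)$ converge to $R(\lambda_0) \in \mathcal L(X)$, so $\sup_n \|R(\lambda_n, A)\| < \infty$. Corollary~\ref{cor:3.2} then yields $\lambda_0 \in \rho(A)$, and continuity of $R(\cdot, A)$ near $\lambda_0$ together with continuity of $R$ gives $R(\lambda_0) = R(\lambda_0, A)$, so $\lambda_0 \in V$. Hence $V$ is clopen and nonempty in the connected set $U$, whence $V = U$, which is exactly the conclusion.

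I expect the identity-theorem step to be the conceptually routine part, and the genuine obstacle to lie in establishing relative closedness of $V$; without Corollary~\ref{cor:3.2} one would have no a priori reason for the limit point $\lambda_0$ to belong to $\rho(A)$ at all. The Neumann-series estimate underlying that corollary is precisely what converts a uniform resolvent bound along a convergent sequence into membership in the resolvent set, and it is the linchpin of the argument. A minor point to handle carefully is that the $\mathcal L(X)$-valued identity theorem should be invoked in its vector-valued form (or reduced to the scalar case through the duality pairing), but this is standard and introduces no real difficulty.
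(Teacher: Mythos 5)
Your route is genuinely different from the paper's. The paper never runs a connectedness argument on $\rho(A)$: for fixed $\lambda\in K$ it observes that $\mu\mapsto(\mu-\lambda)R(\lambda)R(\mu)$ and $\mu\mapsto R(\lambda)-R(\mu)$ are holomorphic on $U$ and agree on $K$, hence agree on all of $U$ by the identity theorem; a second pass in the other variable shows that $R$ is a pseudo-resolvent on $U$, and Proposition~\ref{prop:3.4} (combined with Lemma~\ref{lem:3.3}(b), which recovers the relation from its resolvent at a single point of $K$) then delivers $U\subset\rho(A)$ and $R=R(\cdot,A)$ on $U$ in one stroke, with no need for Corollary~\ref{cor:3.2}. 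Your clopen argument instead propagates membership in $\rho(A)$ pointwise, and you are right that Corollary~\ref{cor:3.2} is the linchpin of the closedness step; what the paper's approach buys is that Proposition~\ref{prop:3.4} manufactures the relation globally from the functional equation, so no topology on $\rho(A)$ is ever needed.

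One step of your write-up does not hold as stated: the relative openness of $V=\{\lambda\in U:\lambda\in\rho(A),\ R(\lambda)=R(\lambda,A)\}$. On $U\cap\rho(A)$ this is the zero set of the holomorphic function $R-R(\cdot,A)$, and zero sets are closed, not open; the identity theorem only gives $R\equiv R(\cdot,A)$ on those connected components of $U\cap\rho(A)$ in which the agreement set accumulates. A priori $V$ could also contain isolated agreement points in other components, and around such a point $V$ contains no neighbourhood, so ``$V$ contains a full neighborhood of each of its points'' is not justified by what precedes it. The repair is routine: work instead with the union $V_0$ of all connected components of $U\cap\rho(A)$ on which $R\equiv R(\cdot,A)$. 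This set is open by construction and nonempty because the infinite compact set $K$ has an accumulation point, which lies in such a component by the identity theorem. Your closedness argument then goes through: if $\lambda_n\in V_0$ converges to $\lambda_0\in U$, continuity of $R$ bounds $\|R(\lambda_n,A)\|=\|R(\lambda_n)\|$, Corollary~\ref{cor:3.2} gives $\lambda_0\in\rho(A)$, and since the (open) component $C_0$ of $U\cap\rho(A)$ containing $\lambda_0$ eventually contains the $\lambda_n$, it coincides with their component and hence lies in $V_0$. With this modification your proof is complete.
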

\begin{proof}
a)  Let $\lambda\in K$. Then letting $F_1(\mu)=(\mu-\lambda)R(\lambda)R(\mu)$ and $F_2(\mu)=R(\lambda)-R(\mu)$ we obtain two holomorphic functions on $U$ which coincide on $K$. Thus $F_1(\mu)=F_2(\mu)$ for all $\mu\in U$ by the Uniqueness Theorem.
 
b) Now let $\lambda\in U$ and define $F_1,F_2$ as before. Then $F_1(\mu)=F_2(\mu)$ for $\mu\in K$ by a). Hence $F_1(\mu)=F_2(\mu)$ for all $\mu\in U$. Thus $R:U\to{\mathcal L}(X)$ is a pseudo-resolvent. Now the claim follows from Proposition~\ref{prop:3.4}.       
 
\end{proof}

Next we consider convergence of relations. Let $A_n\subset X\times X$ be relations, $n\in\N$. We define the \emph{limit} of $A_n$ as the relation 
\[  A:=\{  (x,y)\in X:  \exists (x_n,y_n)\in A_n\mbox{ such that } \lim_{n\to\infty}(x_n,y_n)=(x,y)\mbox{ in }X\times X\},\]
and write $A=\lim_{n\to\infty} A_n$. Then  $A$ is a closed relation.  This a consequence of the following lemma applied to $Z=X\times X$. 
\begin{lem}\label{lem:3.7new}
Let $Z$ be a Banach space and $Z_n\subset Z$, $n\in\N$, subspaces.  	Then 
\[  Z_\infty := \{  x\in Z:\mbox{ there exist }x_n\in Z_n\mbox{ such that }\lim_{n\to\infty} x_n=x\} \]
is a closed subspace of $Z$.
\end{lem}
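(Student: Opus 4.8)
The plan is to show that $Z_\infty$ is closed under addition and scalar multiplication, and then that it is topologically closed in $Z$. For the algebraic part, if $x,\tilde{x}\in Z_\infty$, then there exist $x_n,\tilde{x}_n\in Z_n$ with $x_n\to x$ and $\tilde{x}_n\to \tilde{x}$; since each $Z_n$ is a subspace, $x_n+\tilde{x}_n\in Z_n$ and $\lambda x_n\in Z_n$, and these converge to $x+\tilde{x}$ and $\lambda x$ respectively, so $Z_\infty$ is a subspace. This part is entirely routine.

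The real content is showing $Z_\infty$ is closed, and here the main obstacle is a diagonal-sequence issue: a point in the closure of $Z_\infty$ is a limit of points each of which is itself only a limit of elements of the $Z_n$. First I would take a sequence $(y^{(k)})_k\subset Z_\infty$ with $y^{(k)}\to y$ in $Z$ and aim to produce $x_n\in Z_n$ with $x_n\to y$. Passing to a subsequence, I may assume $\|y^{(k)}-y\|\leq 2^{-k}$. For each fixed $k$, since $y^{(k)}\in Z_\infty$, there is a sequence in $\bigcup_n Z_n$ converging to $y^{(k)}$; concretely there exist elements $w\in Z_n$ (for suitable large $n$) with $\|w-y^{(k)}\|$ as small as we like. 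The key step is to choose, for each $k$, an index $n_k$ (strictly increasing in $k$) and an element $x_{n_k}\in Z_{n_k}$ with $\|x_{n_k}-y^{(k)}\|\leq 2^{-k}$, so that $\|x_{n_k}-y\|\leq \|x_{n_k}-y^{(k)}\|+\|y^{(k)}-y\|\leq 2^{1-k}\to 0$.

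The delicate point is that the definition of $Z_\infty$ requires a genuine sequence $x_n\in Z_n$ indexed by \emph{all} $n\in\N$, not just along a subsequence $n_k$. To repair this I would, for indices $n$ not of the form $n_k$, simply set $x_n=0\in Z_n$. Then the full sequence $(x_n)_n$ satisfies $x_n\in Z_n$ for every $n$, and along the subsequence $n_k$ we have $x_{n_k}\to y$ while the intermediate terms are $0$. This does \emph{not} give $x_n\to y$ in general. The correct fix is instead to define $x_n=x_{n_k}$ for $n_k\leq n<n_{k+1}$; since $x_{n_k}\to y$ and the blocks are constant, one checks directly that $x_n\to y$, and each $x_n$ lies in $Z_n$ only if $x_{n_k}\in Z_n$ for all $n$ in the block, which fails. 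Recognizing this, the clean argument is: for each $k$ pick $n_k$ and $x_{n_k}\in Z_{n_k}$ as above, and for every $n$ choose $k(n)$ to be the largest $k$ with $n_k\leq n$ and set $x_n$ to be \emph{some} element of $Z_n$ within distance $2^{-k(n)}$ of $y^{(k(n))}$, which exists because $y^{(k(n))}\in Z_\infty$. Then $\|x_n-y\|\leq 2^{-k(n)}+\|y^{(k(n))}-y\|\to 0$ as $n\to\infty$, since $k(n)\to\infty$. This yields $x_n\in Z_n$ for all $n$ with $x_n\to y$, hence $y\in Z_\infty$, completing the proof. I expect the careful bookkeeping of this diagonal choice — ensuring a valid element exists in \emph{every} $Z_n$ with the required distance bound — to be the only subtle step.
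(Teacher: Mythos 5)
Your overall strategy is the same blockwise diagonal argument the paper uses, and your rejection of the two naive fixes (padding with $0$, and constant blocks $x_n=x_{n_k}$) is correct. But the final ``clean argument'' has one genuine gap, precisely at the point you flag as delicate: you set $x_n$ to be ``some element of $Z_n$ within distance $2^{-k(n)}$ of $y^{(k(n))}$, which exists because $y^{(k(n))}\in Z_\infty$.'' That justification does not suffice. Membership $y^{(k)}\in Z_\infty$ gives a sequence $w_m\in Z_m$ with $w_m\to y^{(k)}$, hence $\dist(y^{(k)},Z_m)\leq 2^{-k}$ only for all \emph{sufficiently large} $m$, say $m\geq N_k$; it says nothing about a fixed index $m$ (e.g.\ $Z_1$ could be $\{0\}$ with $y^{(k)}$ far from $0$). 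Your prescription for $n_k$ only asks that \emph{one} good element $x_{n_k}$ exist in $Z_{n_k}$, so for $n$ in the block $[n_k,n_{k+1})$ with $n<N_k$ the required element of $Z_n$ need not exist, and the construction breaks down.

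The repair is exactly the extra condition you anticipate needing: choose $n_k$ not merely as an index where a good approximant happens to exist, but as a tail index for the approximating sequence of $y^{(k)}$, i.e.\ $n_k>n_{k-1}$ with $\|w^k_n-y^{(k)}\|\leq 2^{-k}$ for \emph{all} $n\geq n_k$, where $(w^k_n)_n$ is a sequence with $w^k_n\in Z_n$ converging to $y^{(k)}$. Then on each block $n_k\leq n<n_{k+1}$ you may take $x_n=w^{k}_n\in Z_n$, and $\|x_n-y\|\leq 2^{-k}+2^{-k}\to 0$ since $k(n)\to\infty$. With this one-line modification your argument coincides with the paper's proof, which runs the same induction over $k$, carrying along the full sequences $(y^k_n)_{n\in\N}$ with $y^k_n\in Z_n$ and choosing $n_k$ so that the whole tail is within $1/k$ of the target.
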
	
\begin{proof}
It is obvious that $Z_\infty$ is a subspace of $Z$. We show that $Z_\infty$ is closed. Let $z\in \overline{Z_\infty}$. We construct  inductively over $k\in\N$ sequences $(y_n^k)_{n\in\N}$ and numbers $n_k\in\N$ such that $y_n^k\in Z_n$ for all $n\in \N$, $n_k<n_{k+1}$ and 
\[  \|z-y_n^k\|_Z<\frac{1}{k}\mbox{ for all }n\geq n_k.\]  
Let $k=1$. Choose $x\in Z_\infty$ such that $\|z-x\|_Z<\frac{1}{2}$. There exists $y_n^1\in Z_n$ such that $\lim_{n\to\infty} y_n^1=x$. Choose $n_1\in\N$ such that $\|y_n^1-x\|_Z<\frac{1}{2}$ for all $n\geq n_1$. Then $\|y_n^1-z\|_Z <1$ for all $n\geq n_1$. 

Now let $k>1$ and assume that the sequences $(y_n^m)_{n\in\N}$ and $n_m$ are constructed for $m\leq k-1$. Choose $x\in Z_\infty$ such that $\|x-z\|_Z<\frac{1}{2k}$. There exist $y_n^k\in Z_n$ such that $x=\lim_{n\to\infty} y_n^k$. Choose $n_k>n_{k-1}$ such that $\|x-y_n^k\|_Z<\frac{1}{2k}$ for all $n\geq n_k$. Then $\|y_n^k-z\|_Z<\frac{1}{k}$ for all $n\geq n_k$. This proves the inductive statement. Now we define $z_n\in Z_n$ as follows. For $n=1,\cdots, n_2-1$ we let $z_n=y_n^1$ and for $k>1$, $z_n=y_n^k$ for $n_k\leq n<n_{k+1}$. Then $z_n\in Z_n$ and $\|z_n-z\|<\frac{1}{k}$ for all $n\geq n_k$. Thus $z=\lim_{n\to\infty} z_n$ and consequently  $z\in Z_\infty$.     
\end{proof}	

It was pointed out  in \cite[Section 1.9]{BHS20} that $\lim_{n\to\infty}A_n$, which  can be defined for any sequence of relations, gives the right limit for resolvent convergence.  Slighly different versions of the following theorem are \cite[Theorem 3.7]{taiwan} and \cite[Theorem 1.9.4]{BHS20}.     
\begin{thm}\label{th:3.5}
Let $A_n$ be relations, $n\in\N$, $A=\lim_{n\to\infty} A_n$. Let $\lambda\in \rho(A_n)$  for all $n\in \N$ such that $\sup_{n\in\N} \|R(\lambda, A_n)\|<\infty$. The following assertions are equivalent:
\begin{enumerate}
	\item[(i)] $\ran (\lambda -A)$ is dense;
	\item[(ii)] $(R(\lambda, A_n))_{n\in\N}$ is strongly convergent. 
\end{enumerate}  	
In that case $\lambda\in\rho(A)$ and 
\[   R(\lambda,A)=\lim_{n\to\infty} R(\lambda,A_n)\mbox{ strongly}. \]
\end{thm}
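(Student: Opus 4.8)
The plan is to prove the equivalence by setting $R_n := R(\lambda, A_n)$ and $M := \sup_n \|R_n\| < \infty$, and working throughout with the fixed value of $\lambda$. The key observation tying everything together is the graph characterization from Lemma~\ref{lem:3.3}(a): for each $n$ we have $A_n = \{(R_n u, \lambda R_n u - u) : u \in X\}$, so that $(R_n u, \lambda R_n u - u) \in A_n$ for every $u \in X$, i.e. $\lambda R_n u - u \in A_n(R_n u)$. Equivalently, writing $x_n = R_n u$, the pair $(R_n u, u)$ lies in $(\lambda - A_n)^{-1}$. I would use this to translate statements about the limit relation $A = \lim_n A_n$ into statements about the sequence $(R_n)$ evaluated at points $u \in X$.

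First I would establish that if $(R_n)$ converges strongly, say $R_n u \to Qu$ for all $u$, then $Q \in \mathcal{L}(X)$ by the uniform boundedness already assumed ($\|Q\| \le M$), and I would show $\lambda \in \rho(A)$ with $R(\lambda, A) = Q$. To see this, take any $u \in X$; then $(R_n u, \lambda R_n u - u) \in A_n$ and this converges to $(Qu, \lambda Qu - u)$, so by definition of the limit relation $(Qu, \lambda Qu - u) \in A$, which says $(\lambda - A)^{-1} \supseteq \{(u, Qu) : u \in X\}$, i.e. $\ran(\lambda - A) = X$ and $Q$ is a right inverse. The point that needs care is that $\lambda - A$ is actually \emph{injective} (so that $Q$ is the genuine resolvent and not just a one-sided inverse); I would verify $\ker(\lambda - A) = \{0\}$ by a direct argument using the resolvent bound, namely that any $(x, \lambda x) \in A$ arises as a limit of $(x_n, y_n) \in A_n$ with $y_n - \lambda x_n \to 0$, and then $x_n = -R_n(y_n - \lambda x_n) \to 0$ forces $x = 0$. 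Since $\lambda \in \rho(A)$ trivially gives $\ran(\lambda - A) = X$ dense, this direction delivers both the implication (ii)$\Rightarrow$(i) and the final convergence statement.

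For the converse (i)$\Rightarrow$(ii), I would exploit that $\ran(\lambda - A)$ is dense together with the uniform bound. The strategy is to prove strong convergence of $(R_n)$ first on the dense set $\ran(\lambda - A)$ and then extend by the standard $3\varepsilon$-argument using $\sup_n \|R_n\| \le M$. Given $y = (\lambda - A)x$ in the range, by definition of $A$ there exist $(x_k, z_k) \in A$ (I mean approximating pairs from the $A_n$) with $(x_k, z_k) \to (x, \lambda x - y)$; more precisely, since $(x, \lambda x - y) \in A = \lim_n A_n$, I can pick $(x_n', y_n') \in A_n$ with $(x_n', y_n') \to (x, \lambda x - y)$. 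Then $\lambda x_n' - y_n' \to y$ and $R_n(\lambda x_n' - y_n') = x_n' \to x$; combined with $\|R_n\| \le M$ and $\lambda x_n' - y_n' \to y$, this yields $R_n y \to x$. Thus $(R_n)$ converges pointwise on the dense set $\ran(\lambda - A)$, and the uniform bound promotes this to strong convergence on all of $X$.

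The main obstacle I anticipate is the injectivity verification in the first direction and, symmetrically, making the approximation-by-limit-pairs bookkeeping precise in the second: the definition of $A = \lim_n A_n$ only guarantees \emph{some} approximating sequence $(x_n, y_n) \in A_n$ for each element of $A$, and one must consistently pass between ``$(x,y) \in A$'' and ``$(x, \lambda x - y) \in \lambda - A$'' while keeping track of which quantity the resolvent bound controls. The resolvent estimate $\|R_n\| \le M$ is exactly what converts convergence of the second coordinate ($\lambda x_n' - y_n' \to y$) into convergence of the first ($x_n' = R_n(\lambda x_n' - y_n') \to x$), so the whole argument hinges on applying that bound at the right moment. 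Once the dense-range convergence is in hand, extending to all of $X$ is routine.
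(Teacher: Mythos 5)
Your proposal is correct and follows essentially the same route as the paper: both arguments hinge on the identity $x_n = R(\lambda,A_n)(\lambda x_n - y_n)$ for approximating pairs $(x_n,y_n)\in A_n$, using the uniform bound to convert convergence of $\lambda x_n - y_n$ into convergence of $x_n$, and both verify $\ker(\lambda-A)=\{0\}$ from the same estimate. The only cosmetic difference is that the paper first records the inequality $\|x\|\le c\|\lambda x - y\|$ for all $(x,y)\in A$ and invokes Proposition~\ref{prop:2.1} to conclude that $\ran(\lambda-A)$ is closed (so density immediately gives surjectivity and $\lambda\in\rho(A)$ before any convergence is proved), whereas you prove convergence on the dense range and extend by equicontinuity; both are valid and rest on the same mechanism.
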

\begin{proof}
	Let $c>0$ such that $\|R(\lambda, A_n)\|\leq c$ for all $n\in\N$. This implies that 
	\begin{equation}\label{eq:3.5}
		\|x\|\leq c\|\lambda x- y\| \mbox{ for all }(x,y)\in A.
	\end{equation}
In fact, let $(x,y)=\lim_{n\to\infty} (x_n,y_n)\in A$ with $(x_n,y_n)\in A_n$. Then 
$x_n=R(\lambda, A_n)(\lambda x_n -y_n)$. This implies \eqref{eq:3.5}. Proposition~\ref{prop:2.1} implies that $\ker (\lambda -A)=\{ 0\}$ and $\ran (\lambda -A)$ is closed. Now assume $(i)$. Then $\ran (\lambda -A)=X$. Thus $\lambda \in\rho(A)$. Let $v\in X$. We show that 
\[  \lim_{n\to\infty} R(\lambda,A_n)v=R(\lambda, A)v\]
There exists $(x,y)\in A$ such that $x-\lambda y=v$. Let $(x_n,y_n)\in A_n$ such  that $\lim_{n\to\infty} (x_n,y_n)=(x,y)$. Then 
\[  x_n=R(\lambda , A_n)(\lambda x_n-y_n)=R(\lambda, A_n)((\lambda x_n-y_n)-(\lambda x-y))+ R(\lambda,A_n)(\lambda x-y).\]
The first term converges to $0$. Since $\lambda x-y=v$, it follows that 
\[   \lim_{n\in\infty}R(\lambda , A_n)v=\lim_{n\to\infty}x_n=x=R(\lambda, A)v.\] 
We have shown that $(i)$ implies $(ii)$ and the additional assertion.\\
$(ii)\Rightarrow (i)$ Let $v\in X$. There exists $x$ such that $x_n:=R(\lambda ,A_n)v \to x$ as $n\to\infty$. Then, for each $n\in\N$,  there exists  $y_n\in X$ such that $(x_n,y_n)\in A_n$ and $\lambda x_n-y_n=v$. Hence $\lim_{n\to\infty} y_n=\lambda x-v$ and $(x,\lambda x-v)\in A$.  Thus 
\[   (x,v)=(x,\lambda x -(\lambda x -v))\in (\lambda -A).\]
Thus $v\in\ran (\lambda -A)$. We have shown that $\ran (\lambda -A)=X$.   
\end{proof} 
\section{$m$-dissipative relations }\label{sec:4}
Let $X$ be a Banach space over $\K=\R$ or $\C$. A relation $A\subset X\times X$ is called \emph{dissipative} if $\|\lambda x\|\leq \|\lambda x-y\|$ for all $(x,y)\in A$ and all $\lambda >0$. 

As in the case of operators, we have a characterization in terms of the duality map. For $x\in X$, let 
\[  dN (x):=\{  x'\in X':\|x'\|\leq 1, \langle x',x\rangle =\|x\|\}. \]
Then the following holds. 
\begin{prop}\label{prop:4.1}
Let $A\subset X\times X$ be a relation. Then $A$ is dissipative  if and only if for all $(x,y)\in A$ there exists $x'\in dN(x)$ such that $\re \langle x',y\rangle \leq 0$. 
\end{prop}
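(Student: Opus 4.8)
The plan is to argue pair by pair: since both the definition of dissipativity and the stated condition quantify over individual elements $(x,y)\in A$, the multivalued structure plays no role and the argument is literally the classical one from the operator case (Lumer--Phillips). Throughout I fix $(x,y)\in A$ and use that, after dividing by $\lambda$, dissipativity of $A$ amounts to $\lambda\|x\|=\|\lambda x\|\leq\|\lambda x-y\|$ for every $\lambda>0$.

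For the implication ($\Leftarrow$) I would simply compute, given $x'\in dN(x)$ with $\re\langle x',y\rangle\leq 0$ and any $\lambda>0$,
\[\lambda\|x\|=\langle x',\lambda x\rangle=\re\langle x',\lambda x-y\rangle+\re\langle x',y\rangle\leq\re\langle x',\lambda x-y\rangle\leq\|\lambda x-y\|,\]
where the first equality uses $\langle x',x\rangle=\|x\|$, the middle inequality uses $\re\langle x',y\rangle\leq 0$, and the last uses $\|x'\|\leq 1$. This is exactly the dissipativity inequality.

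The harder direction is ($\Rightarrow$). Here, for each $\lambda>0$, I would choose a norming functional $z'_\lambda\in dN(\lambda x-y)$, so that $\|z'_\lambda\|\leq 1$ and $\langle z'_\lambda,\lambda x-y\rangle=\|\lambda x-y\|$. Taking real parts and combining with dissipativity yields
\[\lambda\|x\|\leq\|\lambda x-y\|=\lambda\,\re\langle z'_\lambda,x\rangle-\re\langle z'_\lambda,y\rangle\leq\lambda\|x\|-\re\langle z'_\lambda,y\rangle,\]
where the final step uses $\re\langle z'_\lambda,x\rangle\leq\|x\|$. Two facts drop out: first $\re\langle z'_\lambda,y\rangle\leq 0$ for every $\lambda$; and second, rearranging and dividing by $\lambda$, $\|x\|\geq\re\langle z'_\lambda,x\rangle\geq\|x\|-\tfrac{1}{\lambda}|\re\langle z'_\lambda,y\rangle|\geq\|x\|-\tfrac{\|y\|}{\lambda}$, so that $\re\langle z'_\lambda,x\rangle\to\|x\|$ as $\lambda\to\infty$.

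Finally I would pass to the limit. The $z'_\lambda$ lie in the closed unit ball of $X'$, which is weak$^*$-compact by Banach--Alaoglu, so along a subnet with $\lambda\to\infty$ they converge weak$^*$ to some $x'$ with $\|x'\|\leq 1$. Continuity of the pairings against the fixed vectors $x$ and $y$ then gives $\re\langle x',x\rangle=\|x\|$ and $\re\langle x',y\rangle\leq 0$; since also $|\langle x',x\rangle|\leq\|x\|$, the imaginary part of $\langle x',x\rangle$ must vanish, so $\langle x',x\rangle=\|x\|$ and hence $x'\in dN(x)$, as required (the degenerate case $x=0$ is automatic, since then $dN(0)$ is the full unit ball and $x'=0$ already works). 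I expect the only genuine subtlety to be this limiting step: because the unit ball of $X'$ need not be weak$^*$-sequentially compact when $X$ is non-separable, one must extract a convergent \emph{subnet} rather than a subsequence, and one must check that weak$^*$ convergence upgrades the inequality $\re\langle x',x\rangle=\|x\|$ into the full norming identity $\langle x',x\rangle=\|x\|$.
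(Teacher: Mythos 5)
Your proof is correct and is exactly the classical Lumer--Phillips argument (norming functionals of $\lambda x-y$, weak$^*$ compactness, passage to a subnet as $\lambda\to\infty$) that the paper invokes by simply citing the operator case and noting that, since both conditions quantify over individual pairs $(x,y)\in A$, the proof carries over verbatim to relations. So you have written out in full the very proof the paper refers to; no discrepancy.
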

  \begin{proof}
  	The proof for operators is also valid for relations, see  e.g. \cite[Lemma 3.4.2]{ABHN11}.
  \end{proof} 
\begin{cor}\label{cor:4.2}
Let $H$ be a Hilbert space over $\K=\R$ or $\C$, $A\subset H\times H$ a relation.  Then $A$ is dissipative if and only if $\re \langle x,y\rangle_H\leq 0$ for all $(x,y)\in H$. 
\end{cor}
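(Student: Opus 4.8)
The plan is to derive the Hilbert space criterion directly from the Banach space characterization given in Proposition~\ref{prop:4.1}, exploiting the fact that in a Hilbert space the duality map is essentially a singleton determined by the Riesz representation. Recall that for a Hilbert space $H$ with inner product $\langle\cdot,\cdot\rangle_H$, the norm-duality set $dN(x)$ for $x\neq 0$ consists precisely of the single functional represented by $x/\|x\|$; that is, $x'\in dN(x)$ acts as $\langle x',z\rangle=\langle z,x\rangle_H/\|x\|$ for all $z\in H$ (with the appropriate convention on conjugation when $\K=\C$, so that $\langle x',x\rangle=\|x\|$). For $x=0$ the set $dN(0)$ is the whole closed unit ball, but in that case the condition $\re\langle x,y\rangle_H\leq 0$ reads $0\leq 0$ and is vacuously satisfied, so this degenerate case needs only a brief remark.

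First I would prove the forward implication. Assume $A$ is dissipative. By Proposition~\ref{prop:4.1}, for each $(x,y)\in A$ there exists $x'\in dN(x)$ with $\re\langle x',y\rangle\leq 0$. For $x\neq 0$, the uniqueness of the duality element forces $x'$ to be the functional $z\mapsto\langle z,x\rangle_H/\|x\|$, so that $\langle x',y\rangle=\langle y,x\rangle_H/\|x\|$. Hence $\re\langle y,x\rangle_H\leq 0$, and since $\re\langle y,x\rangle_H=\re\langle x,y\rangle_H$, we obtain $\re\langle x,y\rangle_H\leq 0$. The case $x=0$ is immediate as noted above.

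Conversely, suppose $\re\langle x,y\rangle_H\leq 0$ for all $(x,y)\in A$. For $x\neq 0$, I would simply exhibit the duality element $x'$ defined by $\langle x',z\rangle=\langle z,x\rangle_H/\|x\|$; this satisfies $\|x'\|=1$ and $\langle x',x\rangle=\|x\|$, so $x'\in dN(x)$, and $\re\langle x',y\rangle=\re\langle y,x\rangle_H/\|x\|\leq 0$ by hypothesis. For $x=0$ any $x'\in dN(0)$ works since $\langle x',y\rangle$ is irrelevant to the inequality. Applying the reverse direction of Proposition~\ref{prop:4.1} then yields dissipativity of $A$. I expect the only genuine subtlety to be bookkeeping in the complex case: one must keep track of which argument of the inner product is conjugated so that the duality functional is genuinely linear (antilinear in $x$) and the identity $\re\langle y,x\rangle_H=\re\langle x,y\rangle_H$ is applied correctly. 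Note incidentally that the statement as printed contains a typographical slip, writing ``$(x,y)\in H$'' where ``$(x,y)\in A$'' is clearly intended; the proof treats it as the latter.
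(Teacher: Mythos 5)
Your proof is correct and is exactly the derivation the paper intends: the corollary is stated without proof precisely because, as you observe, Proposition~\ref{prop:4.1} combined with the Riesz identification of $dN(x)$ with the singleton $\{x/\|x\|\}$ for $x\neq 0$ gives the equivalence immediately (and you are right that ``$(x,y)\in H$'' is a typo for ``$(x,y)\in A$''). One word of caution on your converse for $x=0$: it is not true that \emph{any} $x'\in dN(0)$ satisfies $\re\langle x',y\rangle\leq 0$ (take $x'$ represented by $y/\|y\|$), but the existence required by Proposition~\ref{prop:4.1} holds trivially with $x'=0$, so the argument stands.
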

A relation $A$ is called \emph{$m$-dissipative} if in addition to dissipativity, the \emph{range condition} 
$\ran (\lambda -A)=X$ is satisfied for  some $\lambda >0$. As in the case  where $A$ is an operator, $m$-dissipativity can be characterized as follows.
\begin{prop}\label{prop:4.3}
	Let $A\subset X\times X$ be a relation. The following assertions are equivalent:
	\begin{itemize}
		\item[(i)] $A$ is $m$-dissipative;
		\item[(ii)] $(0,\infty)\in \rho(A)$ and $\|\lambda R(\lambda , A)\|\leq 1$ for all $\lambda>0$.   
	\end{itemize} 
\end{prop}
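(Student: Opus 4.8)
The plan is to run the classical Lumer--Phillips bootstrap, but keeping track of the multivalued part so that everything stays inside the relation framework of Sections~\ref{sec:2} and~\ref{sec:3}. The implication $(ii)\Rightarrow(i)$ is immediate: if $(0,\infty)\subset\rho(A)$, then each $\lambda-A$ is invertible, so in particular $\ran(\lambda-A)=X$ and the range condition holds; and for dissipativity I would fix $(x,y)\in A$ and $\lambda>0$, note that $(x,\lambda x-y)\in\lambda-A$, hence $x=R(\lambda,A)(\lambda x-y)$, so the bound $\|\lambda R(\lambda,A)\|\le 1$ yields $\lambda\|x\|\le\|\lambda x-y\|$.

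For $(i)\Rightarrow(ii)$ the first thing I would record is a general observation that makes the norm estimate free: if $A$ is merely dissipative and some $\lambda>0$ happens to lie in $\rho(A)$, then automatically $\|\lambda R(\lambda,A)\|\le 1$. Indeed, for $w\in X$ put $x=R(\lambda,A)w$; then $(x,w)\in\lambda-A$, so $w=\lambda x-y$ for some $(x,y)\in A$, and dissipativity gives $\lambda\|x\|\le\|\lambda x-y\|=\|w\|$. Consequently the estimate in $(ii)$ comes for nothing once $(0,\infty)\subset\rho(A)$ is known, and the whole task reduces to proving that inclusion.

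Next I would manufacture a single point $\lambda_0\in\rho(A)$ out of the range condition. Dissipativity at $\lambda_0$ reads $\lambda_0\|x\|\le\|\lambda_0 x-y\|$ for all $(x,y)\in A$, i.e.\ on the relation $\lambda_0-A$ the second coordinate controls $\lambda_0$ times the first. This forces $\ker(\lambda_0-A)=\{0\}$, so $(\lambda_0-A)^{-1}$ is single-valued; since $\ran(\lambda_0-A)=X$ by hypothesis it is everywhere defined, and the same estimate shows it is bounded with norm $\le 1/\lambda_0$. Thus $(\lambda_0-A)^{-1}=\{(w,Qw):w\in X\}$ for some $Q\in\mathcal L(X)$, whose graph is automatically closed, so $\lambda_0-A$ is closed and invertible in the sense of Proposition~\ref{prop:2.1}; that is, $\lambda_0\in\rho(A)$ with $\|\lambda_0 R(\lambda_0,A)\|\le 1$. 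I would stress that closedness of $A$ is not assumed but emerges here as a byproduct, in accordance with the remark after Theorem~\ref{th:1.2}; the only point needing care relative to the operator case is that the multivalued part $\mul A=\ker R(\lambda_0,A)$ may be nontrivial, which is harmless since invertibility of $\lambda_0-A$ does not demand $\mul(\lambda_0-A)=\{0\}$.

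Finally I would spread $\lambda_0$ over the whole half-line. Put $\Lambda:=\rho(A)\cap(0,\infty)$; it is nonempty by the previous step and open in $(0,\infty)$ because $\rho(A)$ is open (Proposition~\ref{prop:3.1}). To see it is also closed in $(0,\infty)$, take $\lambda_n\in\Lambda$ with $\lambda_n\to\mu\in(0,\infty)$; by the general observation $\|R(\lambda_n,A)\|\le 1/\lambda_n$, and since $\mu>0$ this stays bounded, so Corollary~\ref{cor:3.2} gives $\mu\in\rho(A)$. Connectedness of $(0,\infty)$ then forces $\Lambda=(0,\infty)$, and the estimate holds throughout by the observation. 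The hard part is exactly this last passage from one value of $\lambda$ to all of them: it is what genuinely uses dissipativity, via the uniform bound $1/\lambda_n$ that survives precisely because the limit $\mu$ is strictly positive, combined with the openness of the resolvent set and the limiting criterion of Corollary~\ref{cor:3.2}.
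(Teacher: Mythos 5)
Your proof is correct and is precisely the adaptation of the classical operator argument that the paper invokes (its proof is a one-line citation to \cite[Theorem 3.4.5]{ABHN11}): the a priori bound $\|\lambda R(\lambda,A)\|\le 1$ extracted from dissipativity, the observation that the range condition at one $\lambda_0>0$ already yields $\lambda_0\in\rho(A)$ together with closedness of $A$, and the open--closed connectedness bootstrap via Corollary~\ref{cor:3.2} to cover all of $(0,\infty)$. Your explicit attention to the points where the relation setting differs from the operator setting (closedness emerging rather than being assumed, and $\mul A$ being irrelevant to invertibility of $\lambda_0-A$) is exactly the content of the phrase ``can easily be adapted.''
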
  
\begin{proof}
	The proof of \cite[Theorem 3.4.5]{ABHN11} can easily be adapted. 
\end{proof}  
The following observation, easy to check,  is useful. 
\begin{lem}\label{lem:4.4}
Let $A$ be dissipative. Then also the closure $\overline{A}$ is dissipative. 
\end{lem}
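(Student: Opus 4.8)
The plan is to show that dissipativity of $A$ transfers to the closure $\overline{A}$ by passing to the limit in the defining inequality. Recall that $A\subset X\times X$ is dissipative precisely when
\[
\|\lambda x\|\leq \|\lambda x-y\|\quad\text{for all }(x,y)\in A\text{ and all }\lambda>0.
\]
The strategy is entirely elementary: take an arbitrary pair $(x,y)\in\overline{A}$ together with a fixed $\lambda>0$, approximate $(x,y)$ by a sequence of elements of $A$, apply the dissipativity inequality to each approximant, and let the approximation error tend to zero.

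First I would fix $(x,y)\in\overline{A}$ and $\lambda>0$. By definition of the closure in the product topology on $X\times X$, there exists a sequence $(x_n,y_n)\in A$ with $x_n\to x$ and $y_n\to y$ in $X$. Since each $(x_n,y_n)\in A$, dissipativity of $A$ gives
\[
\|\lambda x_n\|\leq \|\lambda x_n-y_n\|\quad\text{for every }n.
\]
Next I would let $n\to\infty$. The maps $z\mapsto\|\lambda z\|$ and $(u,v)\mapsto\|\lambda u-v\|$ are continuous on $X$ and on $X\times X$ respectively, so $\|\lambda x_n\|\to\|\lambda x\|$ and $\|\lambda x_n-y_n\|\to\|\lambda x-y\|$. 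Passing to the limit in the inequality above preserves the weak inequality, yielding $\|\lambda x\|\leq\|\lambda x-y\|$. Since $(x,y)\in\overline{A}$ and $\lambda>0$ were arbitrary, this establishes that $\overline{A}$ is dissipative.

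There is no real obstacle here; the whole argument is the standard fact that a non-strict inequality defined by continuous functions is stable under taking limits, and hence the set of pairs satisfying it is closed. The only point requiring a word of care is the choice of a \emph{single} approximating sequence that works simultaneously for all $\lambda>0$: this is automatic, since the approximation $(x_n,y_n)\to(x,y)$ is chosen independently of $\lambda$, and the limit argument is then carried out for each fixed $\lambda$ separately. Indeed, one could phrase the proof even more abstractly by noting that $\overline{A}$ is contained in the closed set $\{(u,v):\|\lambda u\|\leq\|\lambda u-v\|\}$ for each $\lambda>0$, and $A$ being dissipative means $A$ lies in the intersection of these closed sets, whence so does its closure.
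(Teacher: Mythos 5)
Your proof is correct, and it is exactly the elementary limiting argument the authors have in mind when they label this observation ``easy to check'' and omit the proof: since $X\times X$ is a normed (hence metric) space, every $(x,y)\in\overline{A}$ is a sequential limit of elements of $A$, and the defining inequality $\|\lambda x\|\leq\|\lambda x - y\|$ passes to the limit by continuity of the norm. Your closing remark, that $\overline{A}$ sits inside the intersection over $\lambda>0$ of the closed sets $\{(u,v):\|\lambda u\|\leq\|\lambda u-v\|\}$, is a clean way to package the same argument.
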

In some cases the range condition is easier to prove for $\lambda=0$. And indeed, surjective and dissipative relations are $m$-dissipative. 
\begin{thm}\label{th:4.4}
Let $A\subset X\times X$ be a dissipative relation. If $\ran A=X$, then $A$ is $m$-dissipative and $0\in \rho(A)$.   
\end{thm}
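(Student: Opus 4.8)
The plan is to first pass to the closure $\overline{A}$, establish all conclusions there, and then argue that $A$ is already closed so that $A=\overline{A}$. By Lemma~\ref{lem:4.4} the relation $\overline{A}$ is again dissipative, and since $\ran A=X$ we also have $\ran\overline{A}=X$. So it suffices to treat the closed case and prove: if $\overline{A}$ is closed, dissipative and surjective, then $\overline{A}$ is $m$-dissipative with $0\in\rho(\overline{A})$.

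The $m$-dissipativity of $\overline{A}$ I would obtain from the openness of the surjective relations. Writing $\lambda-\overline{A}=(-\overline{A})+\lambda\Id$ and noting that $-\overline{A}$ is closed with $\ran(-\overline{A})=X$, Corollary~\ref{cor:2.7} yields an $\alpha>0$ such that $\ran(\lambda-\overline{A})=X$ for all $0<\lambda<\alpha$ (apply it to the perturbation $B=\lambda\Id$). Together with dissipativity this is the range condition, so $\overline{A}$ is $m$-dissipative, and Proposition~\ref{prop:4.3} upgrades this to $(0,\infty)\subset\rho(\overline{A})$ with $\|\lambda R(\lambda,\overline{A})\|\leq 1$ for all $\lambda>0$.

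The crux, and the step I expect to be the main obstacle, is to push the resolvent down to $\lambda=0$: the contraction bound gives only $\|R(\lambda,\overline{A})\|\leq 1/\lambda$, which blows up, so I would instead extract a \emph{uniform} bound from surjectivity via the uniform boundedness principle. Given $v\in X$, surjectivity provides $x_v$ with $(x_v,v)\in\overline{A}$; then $\lambda x_v-v\in(\lambda-\overline{A})x_v$, so \eqref{eq:3.4} gives $x_v=R(\lambda,\overline{A})(\lambda x_v-v)=\lambda R(\lambda,\overline{A})x_v-R(\lambda,\overline{A})v$, whence $R(\lambda,\overline{A})v=\lambda R(\lambda,\overline{A})x_v-x_v$ and therefore $\|R(\lambda,\overline{A})v\|\leq 2\|x_v\|$ for every $\lambda>0$. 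Thus $\{R(\lambda,\overline{A}):\lambda>0\}$ is pointwise bounded, hence uniformly bounded, and Corollary~\ref{cor:3.2} applied to any sequence $\lambda_n\to 0$ gives $0\in\rho(\overline{A})$; in particular $\overline{A}$ is invertible, so $\ker\overline{A}=\{0\}$.

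It remains to remove the closure. Since $0\in\rho(\overline{A})$ forces $\ker\overline{A}=\{0\}$, I would show $A=\overline{A}$ directly: if $(x,y)\in\overline{A}$, then $y\in X=\ran A$, so there is $(x',y)\in A\subset\overline{A}$ with the same second coordinate; subtracting, $(x-x',0)\in\overline{A}$ gives $x=x'$, hence $(x,y)\in A$. Thus $A$ is closed, $A=\overline{A}$, and $A$ itself is $m$-dissipative with $0\in\rho(A)$ (and, by Proposition~\ref{prop:2.1}, $A^{-1}$ is a bounded operator on $X$). The only genuinely new ingredient is the uniform-boundedness argument of the third paragraph; the remaining steps are assembled from Corollary~\ref{cor:2.7}, Proposition~\ref{prop:4.3} and Corollary~\ref{cor:3.2}.
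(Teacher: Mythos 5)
Your proof is correct, and the decisive step is genuinely different from the one in the paper. Both arguments begin identically: pass to $\overline{A}$, which is dissipative by Lemma~\ref{lem:4.4} and still surjective, and invoke Corollary~\ref{cor:2.7} to get $\ran(\lambda-\overline{A})=X$ for small $\lambda>0$, hence $m$-dissipativity; and both end identically, recovering $A=\overline{A}$ from $\ker\overline{A}=\{0\}$ by comparing preimages of $y\in\ran A$. Where you diverge is in establishing $0\in\rho(\overline{A})$. The paper argues by contradiction through $\ker\overline{A}$: if $0\neq u\in\ker\overline{A}$, Proposition~\ref{prop:4.5} produces $u'\in\ker\overline{A}'$ with $\langle u',u\rangle\neq 0$ via weak${}^*$ compactness of the dual unit ball and a net of the operators $\lambda R(\lambda,\overline{A})'x'$ as $\lambda\to 0$, and this contradicts $\ker\overline{A}'=(\ran\overline{A})^\perp=\{0\}$ from Proposition~\ref{prop:2.2}; invertibility of $\overline{A}$ (closed, injective, surjective) then gives $0\in\rho(\overline{A})$. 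You instead prove $0\in\rho(\overline{A})$ directly and deduce $\ker\overline{A}=\{0\}$ afterwards: from $(x_v,v)\in\overline{A}$ and \eqref{eq:3.4} you get $R(\lambda,\overline{A})v=\lambda R(\lambda,\overline{A})x_v-x_v$, hence the pointwise bound $\|R(\lambda,\overline{A})v\|\leq 2\|x_v\|$ uniformly in $\lambda>0$, and the uniform boundedness principle together with Corollary~\ref{cor:3.2} (applied to $\lambda_n\to 0$) closes the argument. Your route is more elementary, avoiding duality and the net/subnet machinery altogether; the paper's route isolates Proposition~\ref{prop:4.5}, a statement about $\ker A$ versus $\ker A'$ for $m$-dissipative relations that has independent interest. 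One small point worth making explicit if you write this up: the preimage $x_v$ need not be unique, but pointwise boundedness only requires some finite bound for each fixed $v$, so any choice of $x_v$ suffices.
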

Theorem~\ref{th:4.4} was proved by Sonja Thomaschewski in her thesis \cite[Theorem 3.4.13]{Tho03} if $A$ is a densely defined operator. It was used there to investigate non-autonomous equations. We will apply it to relations in Section~\ref{sec:6}. 

For the proof, we need some preparation.   
\begin{prop}\label{prop:4.5}
Let $A$ be an $m$-dissipative relation on $X$. If $0\neq u\in \ker A$, then there exists $u'\in \ker A'$ such that $\langle u',u\rangle\neq 0$.   
\end{prop}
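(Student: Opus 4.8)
The plan is to reduce the statement to a fixed-point assertion about the resolvent and its adjoint. Fix any $\lambda>0$; by Proposition~\ref{prop:4.3} we have $\lambda\in\rho(A)$ and the bounded operator $P:=\lambda R(\lambda,A)\in{\mathcal L}(X)$ satisfies $\|P\|\leq 1$. First I would record that $u\in\ker A$ forces $Pu=u$: indeed $(u,0)\in A$ gives $(u,\lambda u)\in\lambda-A$, hence $R(\lambda,A)(\lambda u)=u$, i.e. $\lambda R(\lambda,A)u=u$. Consequently $P^k u=u$ for all $k\in\N_0$.

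Next I would identify $\ker A'$ as the fixed-point set of the adjoint $P'\in{\mathcal L}(X')$. The key algebraic fact is $\ran A=\ran(P-I)$. The inclusion ``$\subseteq$'' is the computation already made in the proof of Proposition~\ref{prop:3.1}, which shows $(R(\lambda,A)y,\lambda R(\lambda,A)y-y)\in A$, so that $(P-I)y=\lambda R(\lambda,A)y-y\in\ran A$ for every $y$. For ``$\supseteq$'', given $z\in\ran A$, say $(x,z)\in A$, I set $w:=\lambda x-z$; then $(x,w)\in\lambda-A$, so $R(\lambda,A)w=x$ and $(P-I)w=\lambda x-w=z$. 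Combining this with Proposition~\ref{prop:2.2}(a) gives $\ker A'=(\ran A)^\perp=(\ran(P-I))^\perp=\ker(I-P')$; thus $u'\in\ker A'$ if and only if $P'u'=u'$.

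It therefore suffices to produce $u'\in X'$ with $P'u'=u'$ and $\langle u',u\rangle\neq 0$, and this is the heart of the argument and the step where $m$-dissipativity (through $\|P\|\leq 1$) is indispensable. I would start from $u_0'\in dN(u)$, so that $\|u_0'\|\leq 1$ and $\langle u_0',u\rangle=\|u\|>0$, and form the Cesàro means
\[ v_n':=\frac{1}{n}\sum_{k=0}^{n-1}(P')^k u_0'. \]
Since $\|P'\|=\|P\|\leq 1$, we have $\|v_n'\|\leq 1$; since $P^k u=u$ we get $\langle v_n',u\rangle=\langle u_0',u\rangle=\|u\|$ for every $n$; and $\|(I-P')v_n'\|=\frac{1}{n}\|u_0'-(P')^n u_0'\|\leq\frac{2}{n}\to 0$. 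By Banach--Alaoglu the bounded net $(v_n')$ has a weak$^*$ cluster point $u'$; the weak$^*$ continuity of $P'$ together with $(I-P')v_n'\to 0$ in norm yields $P'u'=u'$, while $\langle u',u\rangle=\|u\|\neq 0$ passes to the limit.

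By the identification of the second step, $u'\in\ker A'$, and $\langle u',u\rangle=\|u\|\neq 0$, which is the assertion. The main obstacle is the third step: the existence of a nontrivial fixed vector of the adjoint $P'$ that pairs nontrivially with the given fixed vector $u$ of $P$. This is a mean-ergodic type construction that genuinely relies on the contraction bound $\|P\|\leq 1$ (to keep the averages in the unit ball so that weak$^*$ compactness applies); for a general bounded operator a fixed point $Pu=u$ need not be detected by any fixed functional of $P'$.
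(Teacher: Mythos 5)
Your proof is correct, and it is worth comparing with the paper's, because the averaging device is different even though the overall strategy (produce, via weak$^*$ compactness, a functional fixed by the adjoint resolvent that still pairs with $u$ to give $\|u\|$) is the same. The paper takes a weak$^*$ cluster point $u'$ of the family $\lambda R(\lambda,A)'x'$ along a subnet with $\lambda\to 0^+$ -- an Abel-type mean over the whole resolvent family -- and then verifies $u'\in\ker A'$ by using the resolvent identity to show $R(1,A)'u'=u'$. You instead fix one $\lambda$, set $P=\lambda R(\lambda,A)$, and run the classical mean ergodic argument on the powers of the contraction $P'$: the telescoping bound $\|(I-P')v_n'\|\le 2/n$ makes the fixed-point property of the weak$^*$ cluster point completely explicit, and your identity $\ran A=\ran(P-I)$, combined with Proposition~\ref{prop:2.2}(a), gives $\ker A'=\ker(I-P')$ cleanly -- a step the paper leaves essentially implicit when it passes from $R(1,A)'u'=u'$ to $u'\in\ker A'$. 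Your route avoids the subnet bookkeeping over the directed family of $\lambda$'s and is more self-contained; the paper's route has the mild advantage of exhibiting $u'$ directly as a limit of the adjoint resolvents at $\lambda\to 0^+$, which matches the surrounding theme of replacing the range condition at $\lambda>0$ by surjectivity at $\lambda=0$. In both arguments $m$-dissipativity enters in exactly the same way, through $\|\lambda R(\lambda,A)\|\le 1$, which keeps the averaged functionals in the dual unit ball. One cosmetic slip: in your verification of $\ran A=\ran(P-I)$ the labels of the two inclusions are interchanged (the computation $(R(\lambda,A)y,\lambda R(\lambda,A)y-y)\in A$ proves $\ran(P-I)\subseteq\ran A$, and the construction $w=\lambda x-z$ proves the reverse inclusion); both inclusions are in fact established, so nothing is missing.
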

\begin{proof}
	Let $0\neq u\in \ker A$. There exists $x'\in X'$ such that $\langle x',u\rangle =\|u\|$, $\|x'\|=1$. Using  Proposition~\ref{prop:4.3}, and since the dual unit ball  is compact for the weak* topology, there exists a directed set $I$ and a  convergent subset  $(y'_i)_{i\in I} $ of $(\lambda R(\lambda , A)'x')_{\lambda>0}$ in the following sense: there exists a mapping $\lambda:I\to(0,\infty)$ such that 
	\begin{itemize}
\item[(a)] $y'_i=\lambda_iR(\lambda_i,A)'x'$, $i\in I$;
\item[(b)] for all $\lambda_0>0$, there exists $i_0\in I$ such that  $\lambda_i\leq \lambda_0$ for all $i_0\prec i$ and 
\item[(c)]$\lim_{i\in I} y'_i=u'$ exists for the weak*-topology (see \cite[IV.2]{RS}). 
\end{itemize}
 Since $u\in \ker A$ one has $\lambda R(\lambda ,A)u=u$ for all $\lambda >0$. Thus 
\[  \langle y'_i,u\rangle =\langle \lambda_i R(\lambda_i, A)'x',u\rangle=\langle x',\lambda_i R(\lambda_i, A)u\rangle =\langle x',u\rangle=\|u\|  \]
for all $i\in I$. It follows that $\langle u',u\rangle =\|u\|$; in particular $u'\neq 0$. 
It follows from $(c)$ that $\lim_{i\in I}R(1,A)'y_i'=R(1,A)'u'$ for the weak*-topology. On the other hand, by the resolvent identity \eqref{eq:3.3}, 
\begin{eqnarray*}
R(1,A)'y_i'=  R(1,A)' \lambda_i R(\lambda_i,A)'x'& = & \frac{\lambda_i}{1-\lambda_i}R(\lambda_i,A)'x'-\frac{\lambda_i}{1-\lambda_i}R(1,A)'x'\\
 & = & \frac{y'_i}{1-\lambda_i}-\frac{\lambda_i}{1-\lambda_i}R(1,A)'x' .
\end{eqnarray*}
Using (b) and (c), $\lim_{i\in I} R(1,A)'y'_i=u'$ for the weak*-topology. We have shown that $R(1,A)'u'=u'$. This implies that $u'\in \ker A'$. 
 \end{proof}
\begin{proof}[Proof of Theorem~\ref{th:4.4}]
Let $A$ be a dissipative relation and assume that $\ran A=X$ . Then $\overline{A}$ is a closed dissipative relation and $\ran \overline{A}=X$.  It follows from Corollary~\ref{cor:2.7} that there exists $\lambda>0$ such that $\ran (\lambda-\overline{A})=X$. Thus $\overline{A}$ is $m$-dissipative. We show that $\ker \overline{A}=\{0\}$. Otherwise, by Proposition~\ref{prop:4.5}, there exists $0\neq u'\in \ker \overline{A}'$.  But 
 by Proposition~\ref{prop:2.2},
\[ \{0\} =X^\perp= (\ran \overline{A})^\perp =\ker \overline{A}'.\]
This is a contradiction since $u'\in \ker \overline{A}'$. Thus $\ker \overline{A}=\{ 0\}$. Now let $(x,y)\in \overline{A}$. By assumption there exists $x_1\in X$ such that $(x_1,y)\in A$. It follows that $x-x_1\in \ker \overline{A}$. Hence $x-x_1=0$. Thus $(x,y)=(x_1,y)\in A$. We have shown that $\overline{A}\subset A$.   
\end{proof}	
Next we consider maximal dissipative relations.  
\begin{defn}\label{del:4.6}
A relation $A\subset X\times X$ is \emph{maximal dissipative} if $A$ is dissipative and has no proper dissipative extension. 
\end{defn}
\begin{lem}\label{lem:4.7}
Each $m$-dissipative relation is maximal dissipative.  
\end{lem}
\begin{proof}
Let $A$ be $m$-dissipative and $A\subset B\subset X\times X$, $B$ dissipative. Let $(x,y)\in B$. Since $\ran (1-A)=X$, there exists $(x_1,y_1)\in A$ such that $x_1-y_1=x-y$. Thus $(x,x-y)\in 1-B$ and \[(x_1,x-y)=(x_1, x_1-y_1)\in 1-A\subset 1-B.\]
Hence $(x-x_1,0)\in 1-B$.  Since $B$ is dissipative, it follows that $x-x_1=0$. Hence $(x,y)=(x_1,y_1)\in A$.   
\end{proof}
We will now prove that the converse is true on a Hilbert space  $H$ over $\K=\R$ or $\C$. 
\begin{prop}\label{prop:4.8}
Let $A\subset H\times H$ be a dissipative relation. If $(\ran (1-A))^\perp\neq 0$, then there exists a proper dissipative extension $B$ of $A$. If $A$ is an operator, then one may choose as $B$ an operator.  	
\end{prop}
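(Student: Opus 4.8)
The plan is to exhibit an explicit one-dimensional enlargement of $A$ built from the deficiency vector. By hypothesis there is a nonzero $z \in H$ with $z \in (\ran(1-A))^\perp$; recalling that $1-A = \{(x,x-y):(x,y)\in A\}$, this means
\[ \langle z, x-y\rangle = 0 \quad \mbox{for all } (x,y)\in A. \]
My candidate extension is $B := A + \span\{(z,-z)\}$, that is, the set of all pairs $(x+\alpha z,\, y-\alpha z)$ with $(x,y)\in A$ and $\alpha\in\K$. This is a subspace of $H\times H$, hence a relation containing $A$.

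First I would check that the extension is proper. If $(z,-z)$ belonged to $A$, then $(z, z-(-z)) = (z,2z)\in 1-A$, so $2z\in\ran(1-A)$, whence $0 = \langle z,2z\rangle = 2\|z\|^2$, contradicting $z\neq 0$. Thus $(z,-z)\in B\setminus A$ and $A\subsetneq B$.

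The key computation is the dissipativity of $B$, which I verify through Corollary~\ref{cor:4.2}. For $(x,y)\in A$ and $\alpha\in\K$, expanding the inner product yields
\[ \re\langle x+\alpha z,\, y-\alpha z\rangle = \re\langle x,y\rangle + \re\big(\alpha\langle z, y-x\rangle\big) - |\alpha|^2\|z\|^2. \]
The middle term vanishes because $\langle z, y-x\rangle = -\langle z, x-y\rangle = 0$; the first term is $\leq 0$ since $A$ is dissipative, and the last is trivially $\leq 0$. Hence $\re\langle \xi,\eta\rangle \leq 0$ for every $(\xi,\eta)\in B$, so $B$ is dissipative. The main point of the argument is precisely this cancellation: the orthogonality of $z$ to $\ran(1-A)$ is exactly what kills the cross term, so that no smallness restriction on $\alpha$ is ever required.

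Finally, for the operator case I would first note that $z\notin\dom A$. Indeed, if $(z,y)\in A$ for some $y$, then $z-y\in\ran(1-A)$ forces $\langle z, z-y\rangle = 0$, i.e. $\re\langle z,y\rangle = \|z\|^2 > 0$, contradicting the dissipativity of $A$. Assuming now $\mul A = \{0\}$, suppose $(0,w)\in B$, say $(0,w) = (x,y) + \alpha(z,-z)$ with $(x,y)\in A$. Then $x = -\alpha z$; if $\alpha\neq 0$ this would put $z\in\dom A$, which is impossible, so $\alpha = 0$ and $(0,w)\in A$, giving $w\in\mul A = \{0\}$. Thus $\mul B = \{0\}$ and $B$ is again an operator. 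I expect the only genuinely delicate step to be the vanishing of the cross term in the dissipativity estimate; once $z\notin\dom A$ is observed, the remaining verifications are routine.
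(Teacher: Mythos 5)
Your proof is correct and follows essentially the same route as the paper: both extend $A$ by pairs $(v,-v)$ with $v\in(\ran(1-A))^\perp$ and use the orthogonality to kill the cross term in the dissipativity estimate, and both handle the operator case by showing such $v$ cannot lie in $\dom A$. The only (immaterial) difference is that you adjoin the span of a single deficiency vector $z$, whereas the paper adjoins all of $(\ran(1-A))^\perp$ at once.
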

\begin{proof}
a) Let $R=\ran (1-A)$. Assume that $R^\perp\neq \{0\}$. Define $B:=\{  (x+v,y-v):(x,y)\in A, v\in R^\perp\}$. Then $A\subsetneq B$ and $B$ is dissipative. In fact,
\begin{eqnarray*}
 \re \langle x+v,y-v\rangle_H & =	 &\re \langle x,y\rangle_H-\re \langle x,v\rangle_H +\re \langle v,y\rangle_H - \langle v,v\rangle_H\\
  & \leq & -\re \langle x,v\rangle_H  + \re \overline{\langle y,v\rangle_H}\\
   & = & -\re \langle x,v\rangle_H  + \re {\langle y,v\rangle_H}\\
 & = & -\re \langle x-y,v\rangle_H\\
  & = & 0
\end{eqnarray*}
since $v\in (\ran (1-A))^\perp$. \\
b) We claim  that $\dom A\cap R^\perp =\{0\}$. Indeed, let $x\in \dom A\cap R^\perp$.  There exists $y\in X$ such that $(x,y)\in A$. Hence $\langle x-y,x\rangle_H=0$. Thus $\langle x,x\rangle_H =\re \langle y,x\rangle_H\leq 0$ and then  $x=0$. \\
c) Assume that $A$ is an operator. Then also the relation $B$ from $a)$ is an operator. In fact, let $(x,y)\in A$, $v\in R^\perp$ so that $(x+v,y-v)\in B$. If $x+v=0$, then by $b)$ $x=v=0$. Hence $(0,y)\in A$, and so $y=0$.  
\end{proof}
\begin{lem}\label{lem:4.9}
Let $A$ be a dissipative relation. Then $\overline{\ran (1-A)}=\ran (1-\overline{A})$. In particular, $A$ is closed if and only if  $\ran (1-A)$ is closed.    
\end{lem}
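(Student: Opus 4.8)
The plan is to reduce everything to the single relation $B:=1-A$ together with the elementary map $\Phi:X\times X\to X\times X$, $\Phi(x,y)=(x,x-y)$. First I would observe that $\Phi$ is a linear involution ($\Phi\circ\Phi=\Id$) which is bounded with bounded inverse, hence a homeomorphism of $X\times X$. By the definition of $1-A$ we have $\Phi(A)=1-A=B$, and since a homeomorphism commutes with taking closures, $\overline{B}=\overline{\Phi(A)}=\Phi(\overline A)=1-\overline A$. Consequently the asserted identity $\overline{\ran(1-A)}=\ran(1-\overline A)$ is exactly $\overline{\ran B}=\ran\overline B$, and the whole lemma becomes a statement about $B$ and $\overline B$.

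The key structural input comes from dissipativity. Taking $\lambda=1$ in the definition gives $\|x\|\le\|x-y\|$ for every $(x,y)\in A$, i.e. $\|u\|\le\|v\|$ for every $(u,v)\in B$. Passing to limits, the same inequality holds on $\overline B$ (this also follows from Lemma~\ref{lem:4.4}). Thus $\overline B$ is a closed relation satisfying condition $(i)$ of Proposition~\ref{prop:2.1} with $\alpha=1$, so that proposition yields at once $\ker\overline B=\{0\}$ and, crucially, that $\ran\overline B$ is closed. With this in hand the identity $\overline{\ran B}=\ran\overline B$ is immediate: for ``$\subseteq$'' note $\ran B\subseteq\ran\overline B$ because $B\subseteq\overline B$, and the right-hand side is closed, so it contains $\overline{\ran B}$; for ``$\supseteq$'', if $v\in\ran\overline B$ pick $u$ with $(u,v)\in\overline B$ and approximating elements $(u_n,v_n)\in B$ converging to $(u,v)$, so that $v_n\in\ran B$ with $v_n\to v$ gives $v\in\overline{\ran B}$. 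This proves the first assertion.

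For the ``in particular'', the forward direction is a formality: if $A$ is closed then $A=\overline A$, so $\ran(1-A)=\ran(1-\overline A)=\overline{\ran(1-A)}$ is closed. The converse is the only delicate point, and I expect it to be the main obstacle, since one must upgrade equality of ranges to equality of relations. Assuming $\ran B$ is closed, the first part gives $\ran B=\overline{\ran B}=\ran\overline B$. Then, given $(u,v)\in\overline B$, we have $v\in\ran\overline B=\ran B$, so there is $u'$ with $(u',v)\in B\subseteq\overline B$; subtracting yields $(u-u',0)\in\overline B$, and the inequality $\|u-u'\|\le\|0\|=0$ valid on $\overline B$ forces $u=u'$. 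Hence $(u,v)\in B$, so $\overline B=B$, i.e. $1-A=1-\overline A$ and therefore $A=\overline A$ is closed. The injectivity $\ker\overline B=\{0\}$ supplied by dissipativity is precisely what makes this last step go through, and it is where the hypothesis cannot be dropped.
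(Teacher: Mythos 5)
Your proof is correct and follows exactly the route the paper intends: its entire proof of this lemma is the single sentence that the claim follows from the inequality $\|x\|\leq\|x-y\|$ for all $(x,y)\in A$, and your argument is precisely the careful elaboration of that hint, using the inequality (via Proposition~\ref{prop:2.1} applied to $\overline{1-A}$) to get closedness of $\ran(1-\overline{A})$ and triviality of $\ker(1-\overline{A})$. Nothing further is needed.
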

\begin{proof}
This follows from the inequality $\|x\|\leq \|x-y\|$ for all $(x,y)\in A$.  
\end{proof}
\begin{thm}\label{th:4.10}
A relation $A\subset H\times H$ is $m$-dissipative if and only if $A$ is maximal dissipative. 
\end{thm}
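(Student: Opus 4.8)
The plan is to prove the two implications separately, noting that one of them is already available. The implication that every $m$-dissipative relation is maximal dissipative is precisely Lemma~\ref{lem:4.7}, so nothing remains to be done in that direction. The content of the theorem therefore lies in the converse: assuming that $A$ is maximal dissipative, I want to produce the range condition $\ran(1-A)=H$, which together with dissipativity gives $m$-dissipativity by definition (the characterization via Proposition~\ref{prop:4.3} is not even needed).

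First I would show that a maximal dissipative relation is automatically closed. By Lemma~\ref{lem:4.4} the closure $\overline{A}$ is again dissipative, and it obviously extends $A$. Since $A$ admits no proper dissipative extension, we must have $\overline{A}=A$, so $A$ is closed. Lemma~\ref{lem:4.9} then applies: because $A$ is closed, its range $\ran(1-A)$ is closed in $H$.

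The second ingredient is density of this range, which is where maximality is used in earnest. Here I invoke Proposition~\ref{prop:4.8} in contrapositive form: if we had $(\ran(1-A))^\perp\neq\{0\}$, the proposition would manufacture a proper dissipative extension $B\supsetneq A$, contradicting maximality. Hence $(\ran(1-A))^\perp=\{0\}$, and in the Hilbert space $H$ this forces $\overline{\ran(1-A)}=H$. Combining this with the closedness obtained in the previous step yields $\ran(1-A)=H$, so $A$ is dissipative with the range condition satisfied at $\lambda=1$, i.e.\ $m$-dissipative.

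I do not expect any serious obstacle, since the two enabling results do the heavy lifting; the only point requiring attention is that maximality must be exploited twice and for two different purposes. It is used once through Lemma~\ref{lem:4.4}, to pass to the closure and thereby guarantee that the range is \emph{closed}, and once through Proposition~\ref{prop:4.8}, to rule out a nontrivial orthogonal complement and thereby guarantee that the range is \emph{dense}. Neither closedness nor density alone suffices; it is their conjunction, valid precisely because $H$ is a Hilbert space (where a dense closed subspace is the whole space), that delivers surjectivity of $1-A$.
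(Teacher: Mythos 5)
Your proposal is correct and follows essentially the same route as the paper: both directions are handled identically, with Lemma~\ref{lem:4.7} giving one implication, and the converse obtained by using Lemma~\ref{lem:4.4} and maximality to get $A=\overline{A}$, Lemma~\ref{lem:4.9} to get closedness of $\ran(1-A)$, and Proposition~\ref{prop:4.8} in contrapositive form to get density. Your closing remark that maximality is exploited twice, once for closedness and once for density of the range, is exactly the structure of the paper's argument.
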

\begin{proof}
	By Lemma~\ref{lem:4.7},  $m$-dissipativity implies maximal dissipativity.  Conversely, let $A$ be maximal dissipative.  Then $\overline{A}$ is dissipative and  thus $A=\overline{A}$.  Then $\ran A$ is closed by Lemma~\ref{lem:4.9}. Now Proposition~\ref{prop:4.8}  implies that $\ran (1-A)=H$.    
\end{proof}
An application of Zorn's lemma shows that a dissipative relation has a maximal dissipative extension (which then is $m$-dissipative). Similarly, each dissipative operator has a maximal dissipative operator as extension. However, by an example of Phillips \cite[footnote 6 ]{Ph59}, there exists a maximal dissipative operator $A$ which is not closed . Thus, $\overline{A}$ is not an operator and $A$ is not $m$-dissipative. However, the following holds. 
\begin{thm}\label{th:4.11}
Let $A$ be a \emph{maximal dissipative operator} (i.e. if $A\subset B$ where $B$ is a dissipative operator, then $A=B$). Then $\overline{A}$ is an $m$-dissipative relation. 
\end{thm}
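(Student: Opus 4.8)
The plan is to reduce the whole statement to the range condition $\ran(1-\overline{A})=H$ and, crucially, to exploit the maximality hypothesis at the level of the operator $A$ rather than at the level of its closure. By Lemma~\ref{lem:4.4} the relation $\overline{A}$ is dissipative, so by the very definition of $m$-dissipativity it suffices to show that $\ran(1-\overline{A})=H$; this establishes that $\overline{A}$ is an $m$-dissipative relation with $\lambda=1>0$.

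First I would apply Proposition~\ref{prop:4.8} to $A$ itself. Note that $A$ is a dissipative operator. Were $(\ran(1-A))^\perp\neq\{0\}$, the operator part of Proposition~\ref{prop:4.8} would produce a proper dissipative extension $B$ of $A$ that is again an operator. This contradicts the hypothesis that $A$ is maximal dissipative among operators. Hence $(\ran(1-A))^\perp=\{0\}$, which in a Hilbert space means precisely that $\ran(1-A)$ is dense in $H$.

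Next I would pass to the closure. By Lemma~\ref{lem:4.9} we have $\ran(1-\overline{A})=\overline{\ran(1-A)}$. Since $\ran(1-A)$ is dense by the previous step, its closure is all of $H$, so $\ran(1-\overline{A})=H$. Combined with the dissipativity of $\overline{A}$ this yields the range condition, and therefore $\overline{A}$ is $m$-dissipative.

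The main subtlety, and the point at which a naive argument breaks down, is the decision to invoke Proposition~\ref{prop:4.8} for $A$ rather than for $\overline{A}$. One cannot argue directly that $\overline{A}$ admits no proper dissipative extension: by the example of Phillips discussed above, $\overline{A}$ is in general a genuine multivalued relation, so the operator part of Proposition~\ref{prop:4.8} is unavailable for $\overline{A}$, and a proper dissipative \emph{relation} extension of $\overline{A}$ would not contradict the maximality of $A$ \emph{among operators}. Applying the maximality exactly where the extension produced is guaranteed to be an operator is what makes the argument succeed.
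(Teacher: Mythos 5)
Your proof is correct and follows essentially the same route as the paper's: both apply the operator part of Proposition~\ref{prop:4.8} to $A$ itself (where maximality among operators gives the contradiction) and then transfer the resulting density of $\ran(1-A)$ to the closedness of $\ran(1-\overline{A})$ via Lemma~\ref{lem:4.9}. Your write-up merely makes explicit the density step and the role of Lemma~\ref{lem:4.4} that the paper's terse argument leaves implicit.
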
     
\begin{proof}
Assume that $R:=\ran (1-\overline{A})	\neq H$. Then by Proposition~\ref{prop:4.8}, $A$ has a proper dissipative extension which is an operator. Thus $\ran (1-\overline{A})=H$, i.e. $\overline{A}$ is $m$-dissipative.    
\end{proof}	
For densely defined operators, things are different. The following holds. 
\begin{thm}\label{th:4.12}
Let $A\subset H\to H$ be an operator. The following assertions are equivalent:
\begin{itemize}
	\item[(i)] $A$ is $m$-dissipative;
	\item[(ii)] \begin{itemize}
		\item[(a)] $\dom A$ is dense and 
		\item[(b)] $A$ is a  maximal dissipative operator.   
	\end{itemize} 
\end{itemize}
\end{thm}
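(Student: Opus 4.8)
The plan is to prove the two implications separately, using Lemma~\ref{lem:4.7} and Theorem~\ref{th:4.11} as the main structural inputs, and supplying the Hilbert-space content through two computations: one showing that an $m$-dissipative operator has dense domain, the other showing that a densely defined dissipative operator is closable. Throughout I use the Hilbert-space form of dissipativity from Corollary~\ref{cor:4.2}.

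For $(i)\Rightarrow(ii)$, assertion $(b)$ is almost immediate: by Lemma~\ref{lem:4.7} an $m$-dissipative relation is maximal dissipative, so in particular $A$ admits no proper dissipative relation extension, and \emph{a fortiori} no proper dissipative \emph{operator} extension, i.e. $A$ is a maximal dissipative operator. The substantive point is density $(a)$. Here I would take $z\in(\dom A)^\perp$ and test it against the resolvent: since $1\in\rho(A)$, the vector $x:=R(1,A)z$ lies in $\dom A$ and satisfies $Ax=x-z$. Dissipativity gives $\re\langle x,Ax\rangle_H\le 0$, while $x\perp z$ yields $\re\langle x,Ax\rangle_H=\re\langle x,x-z\rangle_H=\|x\|^2$. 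Hence $x=0$, so $R(1,A)z=0$, and Lemma~\ref{lem:3.3n} then says $(0,z)\in A$, i.e. $z\in\mul A=\{0\}$ since $A$ is an operator. Thus $(\dom A)^\perp=\{0\}$ and $\dom A$ is dense.

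For $(ii)\Rightarrow(i)$ I would start from Theorem~\ref{th:4.11}, which already gives that $\overline{A}$ is an $m$-dissipative relation; it remains to promote this to the statement that $A$ itself is a closed $m$-dissipative operator, and the key step is to show that $\overline{A}$ is again an operator, i.e. that $A$ is closable. I would prove closability directly from dissipativity and density: if $x_n\to 0$ and $Ax_n\to y$, then for every $w\in\dom A$ and $t>0$ the inequality $\re\langle A(x_n+tw),x_n+tw\rangle_H\le 0$ expands into a sum of four terms; letting $n\to\infty$ kills the terms containing $x_n$ except for $t\,\re\langle y,w\rangle_H$, leaving $t\,\re\langle y,w\rangle_H+t^2\re\langle Aw,w\rangle_H\le 0$, and dividing by $t$ and letting $t\to0^+$ gives $\re\langle y,w\rangle_H\le 0$. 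Replacing $w$ by $-w$ and (in the complex case) by $iw$ forces $\langle y,w\rangle_H=0$ for all $w\in\dom A$, whence $y=0$ by density. So $\overline{A}$ has $\mul\overline{A}=\{0\}$, i.e. it is a dissipative operator (Lemma~\ref{lem:4.4}) extending the maximal dissipative operator $A$; maximality gives $A=\overline{A}$, so $A$ is closed and, being equal to the $m$-dissipative relation $\overline{A}$, is $m$-dissipative.

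The main obstacle is precisely these two Hilbert-space arguments, which genuinely exploit the inner product via Corollary~\ref{cor:4.2}; in $(ii)\Rightarrow(i)$ the density hypothesis $(a)$ is indispensable, since it is exactly what rescues closability, and Phillips' example (cited after Theorem~\ref{th:4.11}) shows that without it a maximal dissipative operator can have a closure that is not an operator, so that the implication genuinely fails. Everything else is bookkeeping: the observation that maximality among operators follows from maximality among relations, and the identification of $\mul A$ via Lemma~\ref{lem:3.3n}.
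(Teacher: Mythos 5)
Your proof is correct and follows essentially the same route as the paper: $(b)$ via Lemma~\ref{lem:4.7}, and $(ii)\Rightarrow(i)$ via Theorem~\ref{th:4.11} together with the fact that a densely defined dissipative operator has an operator as its closure, whence $A=\overline{A}$ by maximality. The only difference is that where the paper cites the literature for the density of $\dom A$ and for the closability of $A$ (\cite[Proposition 3.3.8]{ABHN11} and \cite[Lemma 3.4.4]{ABHN11}), you supply correct self-contained Hilbert-space arguments.
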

 \begin{proof}
 $(i)\Rightarrow (ii)$ $(a)$ follows from \cite[Proposition 3.3.8]{ABHN11} and $(b)$ from Lemma~\ref{lem:4.7}\\
 $(ii)\Rightarrow (i)$ Let $A$ be a maximal dissipative operator with dense domain. By Theorem~\ref{th:4.11}, $\overline{A}$ is $m$-dissipative. It is well-known that $\overline{A}$ is an operator (since $\dom A$ is dense), see \cite[Lemma 3.4.4]{ABHN11}. Thus $A=\overline{A}$ by maximality.   
 \end{proof}
It was Phillips who has shown that $(ii)\Rightarrow (i)$ in Theorem~\ref{th:4.12}. He used the Cayley transform. The more direct proof we give here is from \cite{RDV3}, where the operator case of Theorem~\ref{th:4.10} is proved.  
\section{Generation theorems}\label{sec:5}
In this section we establish generation theorems for $m$-dissipative relations. We want to use the following definition. Let $X$ be a Banach space over $\K=\R$ or $\C$. 
\begin{defn}\label{def:5.1}
A mapping $T:(0,\infty)\to {\mathcal L}(X)$ is a \emph{semigroup} if $T(t+s)=T(t)T(s)$ for all $t,s>0$. We speak of a  \emph{strongly continuous semigroup} if the mapping $T$ is strongly continuous. We call $T$ \emph{non-degenerate} if for $x\in X$,
\[ T(t)x=0\mbox{ for all }t\geq 0\mbox{ implies }x=0.\]  
\end{defn}    
A $C_0$-semigroup is a semigroup $T$ such that $\lim_{n\to\infty} T(t)=\Id$ strongly. This implies that $T$ is strongly continuous. Our definition deviates from the literature where the term "strongly continuous semigroup" is frequently used synonymously for $C_0$-semigroup.  

First we define the generator of a bounded strongly continuous semigroup. 
\begin{thm}\label{th:5.2}
Let $T:(0,\infty)\to {\mathcal L}(X)$ be a strongly continuous semigroup such that $\|T(t)\|\leq M$ for all $t>0$. Then there exists  a unique closed relation $A\subset X\times X$ such that $(0,\infty)\subset \rho(A)$ and 
\[ R(\lambda, A)x=\int_0^\infty  e^{-\lambda t} T(t)x \,dt\mbox{ for all }\lambda >0\mbox{ and }x\in X.\]
 We call $A$ the \emph{generator} of $T$. Thus $A$ is a closed relation and 
 \[\|\lambda R(\lambda,A)\|\leq M \mbox{ for all }\lambda >0.\]  
\end{thm}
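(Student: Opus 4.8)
The plan is to build the relation $A$ from the Laplace transform of the semigroup and then invoke Proposition~\ref{prop:3.4}. For each $\lambda>0$ and $x\in X$, I would define
\[ R(\lambda)x:=\int_0^\infty e^{-\lambda t}T(t)x\,dt. \]
First I would check this is well defined. Since $T$ is strongly continuous on $(0,\infty)$, the map $t\mapsto T(t)x$ is continuous there, hence Bochner measurable, and $\|e^{-\lambda t}T(t)x\|\leq M\|x\|e^{-\lambda t}$, which is integrable over $(0,\infty)$; thus the Bochner integral exists (integrability near $t=0$ holds because the integrand is bounded by $M\|x\|$, even though $T$ need not extend continuously to $0$). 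Estimating under the integral gives $\|R(\lambda)x\|\leq M\|x\|\int_0^\infty e^{-\lambda t}\,dt=\frac{M}{\lambda}\|x\|$, so $R(\lambda)\in\LX$ with $\|\lambda R(\lambda)\|\leq M$ for all $\lambda>0$. This is exactly the estimate claimed at the end of the statement, once $R(\lambda)$ is identified with $R(\lambda,A)$.

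The heart of the argument is to show that $R$ is a \emph{pseudo-resolvent}, i.e. that it satisfies the resolvent identity required by Proposition~\ref{prop:3.4}. Fix $\lambda,\mu>0$ with $\lambda\neq\mu$ and $x\in X$. Writing the product as a double integral and using the semigroup law $T(s)T(t)=T(s+t)$ (valid for $s,t>0$) gives
\[ R(\lambda)R(\mu)x=\int_0^\infty\int_0^\infty e^{-\lambda s}e^{-\mu t}T(s+t)x\,dt\,ds. \]
The integrand is continuous on $(0,\infty)^2$ and dominated in norm by $M\|x\|e^{-\lambda s-\mu t}$, which is integrable on $(0,\infty)^2$, so Fubini's theorem applies. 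Substituting $r=s+t$ in the inner integral and then interchanging the order of integration yields
\[ R(\lambda)R(\mu)x=\int_0^\infty e^{-\mu r}T(r)x\left(\int_0^r e^{(\mu-\lambda)s}\,ds\right)dr=\frac{1}{\mu-\lambda}\int_0^\infty\left(e^{-\lambda r}-e^{-\mu r}\right)T(r)x\,dr, \]
which equals $\frac{1}{\mu-\lambda}\bigl(R(\lambda)x-R(\mu)x\bigr)$. Hence $R(\lambda)-R(\mu)=(\mu-\lambda)R(\lambda)R(\mu)$, the required identity (the case $\lambda=\mu$ being trivial).

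With $R$ shown to be a pseudo-resolvent on $\Omega=(0,\infty)$, Proposition~\ref{prop:3.4} produces a unique relation $A$ with $(0,\infty)\subset\rho(A)$ and $R(\lambda)=R(\lambda,A)$ for all $\lambda>0$; this relation is automatically closed, since $\lambda-A$ is invertible and invertibility includes closedness. The bound $\|\lambda R(\lambda,A)\|\leq M$ then follows from the estimate of the first paragraph, and the uniqueness of $A$ is exactly the uniqueness asserted in Proposition~\ref{prop:3.4}. The main obstacle is the middle step: one must justify Bochner integrability using only strong continuity on the open half-line (where $T$ may fail to extend to $t=0$ and may be degenerate) and carefully apply Fubini's theorem via the integrable dominating function, after which the change of variables and the passage to the resolvent identity are routine.
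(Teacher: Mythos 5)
Your proposal is correct and follows essentially the same route as the paper: define $R(\lambda)x=\int_0^\infty e^{-\lambda t}T(t)x\,dt$, verify that $R$ is a pseudo-resolvent, and apply Proposition~\ref{prop:3.4}, concluding with the same norm estimate. The only difference is that you carry out the Fubini computation for the resolvent identity explicitly, whereas the paper cites the proof of Theorem 3.1.7 in \cite{AB93} for this step.
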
   
\begin{proof}
Let $R(\lambda)x:=\int_0^\infty e^{-\lambda t}T(t)x\,dt$.  Then $R:(0,\infty)\to{\mathcal L}(X)$ is a pseudo-resolvent by the proof of \cite[Theorem 3.1.7]{AB93}. By Proposition~\ref{prop:3.4}  there exists a unique closed relation $A\subset X\times X$ such that $(0,\infty)\subset \rho(A)$ and $R(\lambda)=R(\lambda, A)$ for all $\lambda >0$. Thus for all $x\in X$, $\lambda >0$, 
\begin{eqnarray*}
\|\lambda R(\lambda ,A)x\| & = & \|\int_0^\infty \lambda e^{-\lambda t} T(t)x dt\|\\
 & \leq & M\int_0^\infty \lambda e^{-\lambda t} dt \|x\|=M\|x\|. 	   
 \end{eqnarray*}
\end{proof}
In the situation of Theorem~\ref{th:5.2} one has
\begin{equation}\label{eq:5.1}
	\mul A=\{x\in X:T(t)x=0\mbox{ for all }t\geq 0\}.
\end{equation}
In fact, by Lemma~\ref{lem:3.3}, $\mul A=\ker R(\lambda, A)$ for all $\lambda>0$.  Thus the claim follows from the Uniquenss Theorem \cite[Theorem 1.7.3]{ABHN11}. In particular, the relation $A$ is an operator if and only $T$ is non-degenerate. 

If $T$ is a strongly continuous semigroup such that $\|T(t)\|\leq 1$ for all $t>0$, then  the generator $A$ of $T$ is an m-dissipative relation. We will see that the converse is true on Banach spaces with Radon--Nikodym property, but not in general.  We start investigating when a  strongly continuous $T$ has a limit as $t\to 0+$. This has the following consequences. 
\begin{prop}\label{prop:5.3}
	Let $T:(0,\infty)\to {\mathcal L}(X)$ be a strongly continuous semigroup. Assume that the strong limit 
	\[ T(0)=\lim_{t\to 0+} T(t)\]
	exists. Then $T(0)\in {\mathcal L}(X)$ is a projection such that $T(t)T(0)=T(0)T(t)$ for all $t\geq 0$. Let $X_1=T(0)X$, $X_0=\ker T(0)$. Then $X_0$ and $X_1$ are invariant under $T$, $X=X_0\oplus X_1$, 
	\[ t\mapsto T(t)_{|X_1}:[0,\infty)\to {\mathcal L}(X_1)\]
	is a $C_0$-semigroup and $T(t)_{|X_0}=0$ for all $t\geq 0$.     
\end{prop}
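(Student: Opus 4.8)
The plan is to extract everything from the semigroup law $T(t+s)=T(t)T(s)$ by letting $s\to 0+$ and exploiting strong continuity at interior points. First I would record that $T(0)\in\mathcal{L}(X)$: since $T(t)x$ converges as $t\to 0+$ for every $x$ and $t\mapsto T(t)x$ is continuous on each $[\delta,1]$, every orbit $\{T(t)x:0<t\le 1\}$ is bounded, so the uniform boundedness principle yields $M:=\sup_{0<t\le 1}\|T(t)\|<\infty$; hence the pointwise limit $T(0)x:=\lim_{t\to 0+}T(t)x$ defines a bounded linear operator.

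The key step is to show that $T(t)T(0)=T(t)=T(0)T(t)$ for every $t>0$. Fixing $t>0$ and $x\in X$, and using that $T(t)$ is bounded and that $T$ is strongly continuous at the interior point $t$, one has
\[
T(t)T(0)x=\lim_{s\to 0+}T(t)T(s)x=\lim_{s\to 0+}T(t+s)x=T(t)x,
\]
and symmetrically $T(0)T(t)x=\lim_{s\to 0+}T(s)T(t)x=\lim_{s\to 0+}T(s+t)x=T(t)x$. From the first identity I then obtain that $T(0)$ is idempotent: applying the hypothesis to the vector $T(0)x$ gives $\lim_{t\to 0+}T(t)\bigl(T(0)x\bigr)=T(0)^2x$, while $T(t)T(0)x=T(t)x$ forces the same limit to equal $\lim_{t\to 0+}T(t)x=T(0)x$, whence $T(0)^2=T(0)$. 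The commutation relation $T(t)T(0)=T(0)T(t)$ for all $t\ge 0$ is then immediate, since for $t>0$ both sides equal $T(t)$ and the case $t=0$ is trivial.

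With $T(0)$ a bounded projection, the topological direct sum $X=X_0\oplus X_1$, where $X_0=\ker T(0)$ and $X_1=T(0)X$, is standard, and both summands are closed and invariant under $T$: if $x\in X_0$ then $T(0)T(t)x=T(t)T(0)x=0$, so $T(t)x\in X_0$; if $x=T(0)y\in X_1$ then $T(t)x=T(0)T(t)y\in X_1$. Moreover $T(t)$ annihilates $X_0$, because for $x\in X_0$ and $t>0$ we have $T(t)x=T(t)T(0)x=0$, while $T(0)x=0$ by definition. Finally, on $X_1$ the projection $T(0)$ acts as the identity (if $x=T(0)y$ then $T(0)x=T(0)^2y=x$), so $\lim_{t\to 0+}T(t)x=T(0)x=x$ for $x\in X_1$; combined with the invariance and strong continuity, this exhibits $t\mapsto T(t)|_{X_1}$ as a $C_0$-semigroup on $X_1$.

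The routine parts are the direct-sum decomposition and the invariance checks; the only point demanding genuine care is the interchange of limits in the key step, where I must invoke strong continuity of $T$ at the interior point $t>0$ to pass the limit $s\to 0+$ through $T(t)$ and through the semigroup law, and then feed the relation $T(t)T(0)=T(t)$ back into the hypothesis (applied to $T(0)x$) to deduce idempotency. Beyond keeping the order of these two limits straight, I do not anticipate any substantive obstacle.
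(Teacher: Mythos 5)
Your proof is correct and follows essentially the same route as the paper: define $T(0)$ as the strong limit, get boundedness from uniform boundedness, show it is a projection commuting with $T(t)$, and decompose $X$ accordingly. You actually supply details the paper only asserts (the identities $T(t)T(0)=T(t)=T(0)T(t)$ and $T(0)^2=T(0)$), and your derivation of $T(t)_{|X_0}=0$ directly from $T(t)T(0)=T(t)$ is slightly cleaner than the paper's $\limsup$ argument via $T(t)x=T(t-s)T(s)x$.
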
      
\begin{proof}
	Let $P x=\lim_{t\to 0+} T(t)x$. Then $P\in {\mathcal L}(X)$ by the Banach--Steinhaus Theorem and $P^2=P$. Moreover, $T(t)P=PT(t)$. Thus $X_1=PX$ is invariant and for $x\in X_1$, $\lim_{t\to 0+} T(t)x=Px=x$. Thus $T_{|X_1}$ is a $C_0$-semigroup.  With the help of the Uniform Boundedness Principle, one finds $\delta>0$ and $M\geq 0$ such that $\|T(t)\|\leq M$ if $t\in [0,\delta]$. Let $x\in \ker P$, $t\in (0,\delta]$. Since for $0<s<t$, $T(t)x=T(t-s)T(s)x$, one has \[\|T(t)x\|\leq \limsup_{s\to 0+}M\|T(s) x\|=M\|Px\|=0\mbox{ for all }t>0\mbox{ and }x\in \ker P.\]     
\end{proof} 
Next we want to find criteria for $T(t)$ having a strong limit as $t\to 0+$. 
\begin{prop}\label{prop:5.4}
	Let $A\subset X\times X$ be a closed relation such that $(0,\infty)\subset \rho(A)$ and $\sup_{\lambda >0}\|\lambda R(\lambda ,A)\|<\infty$. 
\begin{enumerate}
\item[(a)] The following assertions are equivalent:
\begin{enumerate}
	\item[(i)] $P=\lim_{\lambda \to\infty}\lambda R(\lambda,A)$ exists for the strong operator topology;
	\item[(ii)] $\mul A+\dom A$ is dense in $X$;
	\item[(iii)] $\mul A\oplus \overline{\dom A }=X$.
\end{enumerate} 
In that case $P$ is the projection onto $\overline{\dom A}$ along the decomposition of (iii). 
\item[(b)]	If $X$ is reflexive, then the assertions $(i)-(iii)$ are automatically true.  
\end{enumerate}	 
\end{prop}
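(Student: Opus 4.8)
The plan is to work throughout with the uniformly bounded operators $J_\lambda := \lambda R(\lambda, A) \in \LX$, which satisfy $\|J_\lambda\| \le M := \sup_{\mu>0}\|\mu R(\mu,A)\| < \infty$. Two structural identities are recorded first: by Lemma~\ref{lem:3.3n} one has $\ker J_\lambda = \mul A$ for every $\lambda>0$, and by Lemma~\ref{lem:3.3}(a) the range is $\ran J_\lambda = \ran R(\lambda,A)=\dom A$. Next I would establish the basic convergence lemma: for $y\in\dom A$, writing $y=R(\mu,A)z$ and using the resolvent identity \eqref{eq:3.3}, a direct computation gives $J_\lambda y - y = \frac{\mu}{\lambda-\mu}R(\mu,A)z-\frac{1}{\lambda-\mu}J_\lambda z\to 0$ as $\lambda\to\infty$; since $\|J_\lambda\|\le M$, this extends by a density argument to $\lim_{\lambda\to\infty}J_\lambda y=y$ for all $y\in\overline{\dom A}$. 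In particular, if $x\in\mul A\cap\overline{\dom A}$ then $0=J_\lambda x\to x$, so $\mul A\cap\overline{\dom A}=\{0\}$; as $\mul A=\ker J_\lambda$ and $\overline{\dom A}$ are closed, the sum $\mul A+\overline{\dom A}$ is always direct.

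I would then prove the cycle $(i)\Rightarrow(iii)\Rightarrow(ii)\Rightarrow(i)$, deferring the first implication to the next paragraph. For $(iii)\Rightarrow(ii)$: if $X=\mul A\oplus\overline{\dom A}$, then approximating the $\overline{\dom A}$-component by elements of $\dom A$ shows $X=\mul A+\overline{\dom A}\subseteq\overline{\mul A+\dom A}$, which is density. For $(ii)\Rightarrow(i)$: on the dense subspace $\mul A+\dom A$ the net converges, since $J_\lambda(u+v)=J_\lambda v\to v$ for $u\in\mul A$, $v\in\dom A$; combined with $\|J_\lambda\|\le M$, a standard $3\varepsilon$-argument shows $(J_\lambda x)$ is Cauchy as $\lambda\to\infty$ for every $x\in X$, hence convergent by completeness, giving $(i)$.

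The heart of the matter is $(i)\Rightarrow(iii)$. Assume $P:=\lim_{\lambda\to\infty}J_\lambda$ exists strongly. Since $J_\lambda x\in\dom A$ we get $Px\in\overline{\dom A}$, so $\ran P\subseteq\overline{\dom A}$; and by the convergence lemma $P$ is the identity on $\overline{\dom A}$, whence $P$ is a projection with $\ran P=\overline{\dom A}$. Clearly $\mul A=\ker J_\lambda\subseteq\ker P$. For the reverse inclusion I would use the product formula $J_\mu J_\lambda=\frac{\lambda}{\lambda-\mu}J_\mu-\frac{\mu}{\lambda-\mu}J_\lambda$, again a direct consequence of \eqref{eq:3.3}: letting $\lambda\to\infty$ for fixed $\mu$ and using the continuity of $J_\mu$ yields $J_\mu P=J_\mu$, so any $x\in\ker P$ satisfies $J_\mu x=J_\mu Px=0$ for all $\mu$, i.e. $x\in\mul A$. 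Thus $\ker P=\mul A$ and $X=\ran P\oplus\ker P=\overline{\dom A}\oplus\mul A$, which is $(iii)$; this same computation exhibits $P$ as the projection onto $\overline{\dom A}$ along $\mul A$, proving the final assertion. I expect the identification $\ker P=\mul A$ to be the main obstacle, as it is the only place where the full resolvent (product) identity, rather than mere uniform boundedness, is needed.

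For part $(b)$ I would verify $(iii)$ directly, exploiting reflexivity. Fix $x\in X$ and a sequence $\lambda_n\to\infty$; since $\{J_{\lambda_n}x\}$ is bounded by $M\|x\|$, reflexivity furnishes a subsequence with a weak limit $y$, and $y\in\overline{\dom A}$ because this subspace, being convex and norm-closed, is weakly closed. Applying the bounded, hence weak-to-weak continuous, operator $J_\mu$ and using $J_\mu J_{\lambda_n}x\to J_\mu x$ strongly (from the product formula) gives $J_\mu y=J_\mu x$, whence $J_\mu(x-y)=0$ for all $\mu$, i.e. $x-y\in\mul A$. Therefore $x=(x-y)+y\in\mul A+\overline{\dom A}$, and as $x$ was arbitrary, together with directness this yields $X=\mul A\oplus\overline{\dom A}$, which is $(iii)$. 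The only subtlety here is again the passage from a weak subsequential limit to membership in $\mul A$, supplied by the identity $J_\mu y=J_\mu x$.
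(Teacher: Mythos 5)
Your proposal is correct and follows essentially the same route as the paper: the convergence $\lambda R(\lambda,A)y\to y$ on $\dom A$ via the resolvent identity plus equicontinuity, the identification $x-Px\in\mul A$ via the product/commutation identity (the paper phrases this as $R(\mu,A)(x-Px)=0$, you as $J_\mu P=J_\mu$, which is the same computation), and weak subsequential limits in the reflexive case. No gaps.
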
 
\begin{proof}
	(a) 1) For $x\in \overline{\dom A}$ one has $\lim_{\lambda \to\infty} \lambda R(\lambda ,A)x=x$. In fact, by equicontinuity, it sufficies to prove the claim when $x\in \dom(A)$. Then there exists $y\in X$ such that $x=R(1,A)y$. Then 
	\[\lambda R(\lambda,A)x=\frac{\lambda}{\lambda-1}R(1,A)y-\frac{1}{\lambda-1}\lambda R(\lambda,A)y\to R(1,A)y=x\mbox{ as }\lambda\to\infty.\]  
	2) It follows from 1) that $\mul A\cap \overline{\dom(A)}=\{0\}$. \\
	$(i)\Rightarrow (iii)$ Let $x\in X$, then $Px=\lim_{\lambda\to\infty} \lambda R(\lambda ,A)x\in \overline{\dom A}$. Moreover, for all $\mu>0$, 
	\[R(\mu,A)(x-Px)=R(\mu,A)x-\lim_{\lambda\to\infty} \lambda R(\lambda,A)R(\mu,A)x=0\]
	by 1). Thus $x-Px\in \mul A$. We have shown that that $X=\mul A\oplus \overline{\dom A}$ and that $P$ is the projection onto $\overline{\dom A}$ along this decomposition.\\
	$(iii)\Rightarrow (ii)$ This is trivial.\\
	$(ii)\Rightarrow (i)$ This follows from 1).\\
	(b) Assume that $X$ is reflexive. We show that property (iii) is satisfied. Let $x\in X$. Then there exists $y\in X$ and a sequence $(\lambda_k)_{k\geq 0}$ such that $\lambda_k>0$ for all $k$ and $\lim_{k\to\infty}\lambda _k=\infty$ such that 
	\[  \lambda_k R(\lambda_k,A)x\rightharpoonup y\mbox{ as }k\to\infty\mbox{ (weak convergence).}\]  
	Since $R(1,\lambda)$ is weakly continuous, it follows that 
	\[ z_k:=R(1,A)(\lambda_k R(\lambda_k,A))\rightharpoonup R(1,A)y\mbox{ as }k\to\infty.\]
	But \[z_k=\frac{\lambda_k}{\lambda_k-1}R(1,A)x-\frac{1}{\lambda_k -1}\lambda_k R(\lambda_k,A)x\to R(1,A)x \mbox{ as } k\to\infty.\]
	Hence $x-y\in \ker R(1,A)=\mul A$. Since $y\in \overline{\dom A}$, we deduce that $x=(x-y)+y\in\mul A \oplus \overline{\dom A}.$   
\end{proof}
\begin{cor}\label{cor:5.5}
	Let $T:(0,\infty)\to{\mathcal L}(X)$ be a strongly continuous bounded semigroup with generator $A\subset X\times X$. The following assertions are equivalent:
	\begin{enumerate}
		\item[(i)] $T(0):=\lim_{t\to 0+} T(t)$ exists for the strong topology;
		\item[(ii)] $X=\mul A\oplus \overline{\dom A}$. 
	\end{enumerate} 
In that case $T(0)$ is the projection onto $\overline{\dom A}$ along the decomposition $(ii)$.   
\end{cor}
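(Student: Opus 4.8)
The plan is to read condition (ii) through Proposition~\ref{prop:5.4}. Since $A$ is the generator of a bounded strongly continuous semigroup, Theorem~\ref{th:5.2} guarantees $(0,\infty)\subset\rho(A)$ and $\sup_{\lambda>0}\|\lambda R(\lambda,A)\|\le M$, so Proposition~\ref{prop:5.4} applies and shows that (ii) is \emph{equivalent} to the existence, in the strong operator topology, of $P:=\lim_{\lambda\to\infty}\lambda R(\lambda,A)$, which is then the projection onto $\overline{\dom A}$ along $\mul A$. The whole proof therefore reduces to comparing the strong limit of $T(t)$ as $t\to 0+$ with the strong Abel limit $\lim_{\lambda\to\infty}\lambda R(\lambda,A)$: I would show each exists iff the other does, and that the two coincide. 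The final assertion that $T(0)$ is the stated projection then follows immediately from Proposition~\ref{prop:5.4}.

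For $(i)\Rightarrow(ii)$ I would use the Laplace representation $\lambda R(\lambda,A)x=\int_0^\infty \lambda e^{-\lambda t}T(t)x\,dt$ from Theorem~\ref{th:5.2} together with the fact that the kernels $\lambda e^{-\lambda t}\,dt$ form an approximate identity concentrating at $t=0$ as $\lambda\to\infty$. Writing $y:=T(0)x$ and using $\int_0^\infty\lambda e^{-\lambda t}\,dt=1$, one splits $\lambda R(\lambda,A)x-y=\int_0^\infty\lambda e^{-\lambda t}(T(t)x-y)\,dt$ at a small $\delta$: the contribution of $[0,\delta)$ is controlled by $\sup_{0<t<\delta}\|T(t)x-y\|$ and the tail over $[\delta,\infty)$ by $(M\|x\|+\|y\|)e^{-\lambda\delta}\to 0$. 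Hence $\lambda R(\lambda,A)x\to T(0)x$ strongly, so $P$ exists and equals $T(0)$, and Proposition~\ref{prop:5.4} yields (ii).

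The implication $(ii)\Rightarrow(i)$ is where the real work lies. Assuming (ii), Proposition~\ref{prop:5.4} provides the decomposition $X=\mul A\oplus\overline{\dom A}$, so by boundedness of $T$ it suffices to produce $\lim_{t\to 0+}T(t)x$ on each summand. On $\mul A$ this limit is $0$ by \eqref{eq:5.1}, since $T(t)x=0$ for all $t>0$. On $\dom A$ I would exploit that $\dom A=\ran R(1,A)$, so each such $x$ can be written $x=R(1,A)u$; using that $T(t)$ commutes with the strong integral defining $R(1,A)$ and the substitution $r=t+s$, one obtains the explicit formula $T(t)x=e^{t}\int_t^\infty e^{-r}T(r)u\,dr$. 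Letting $t\to 0+$, the factor $e^{t}\to 1$ and the integral tends to $\int_0^\infty e^{-r}T(r)u\,dr=R(1,A)u=x$, because the omitted piece $\int_0^t e^{-r}T(r)u\,dr$ has norm at most $Mt\|u\|$. Thus $T(t)x\to x$ on $\dom A$, and a routine density-and-boundedness argument extends this to all of $\overline{\dom A}$. Combining the two summands shows that $T(0):=\lim_{t\to 0+}T(t)$ exists and equals the projection $P$ onto $\overline{\dom A}$ along $\mul A$, which is also the final assertion. The main obstacle is precisely the derivation and justification of the formula $T(t)x=e^{t}\int_t^\infty e^{-r}T(r)u\,dr$, that is, the commutation of $T(t)$ with the resolvent integral and the change of variables; once this is in hand the convergence is immediate.
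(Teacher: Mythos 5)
Your proposal is correct and follows essentially the same route as the paper: the implication $(i)\Rightarrow(ii)$ via the Abelian argument identifying $T(0)x$ with $\lim_{\lambda\to\infty}\lambda R(\lambda,A)x$ and then invoking Proposition~\ref{prop:5.4}, and the implication $(ii)\Rightarrow(i)$ via the formula $T(t)R(1,A)u=e^{t}\int_t^\infty e^{-r}T(r)u\,dr$ on $\dom A$, vanishing of $T$ on $\mul A$ by \eqref{eq:5.1}, and a density argument. The step you flag as the main obstacle is routine, since a bounded operator passes through a Bochner integral and the substitution only uses the semigroup law.
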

\begin{proof}
	$(i)\Rightarrow (ii)$ By a usual Abelian argument \cite[Theorem 4.1.2]{ABHN11},
	\[T(0) x=   \lim_{\lambda\to\infty} \lambda \int_0^\infty e^{-\lambda t}T(t) x\,dt=\lim_{\lambda \to\infty}\lambda R(\lambda,A)x\mbox{ for all }x\in X.\]
	Now $(ii)$ follows from Proposition~\ref{prop:5.4}. \\
	$(ii)\Rightarrow (i)$ Let $x\in \dom A$. Then there exists $y\in X$ such that $x=R(1,A)y$. Hence 
	\begin{eqnarray*}
		T(t)x & = & T(t)\int_0^\infty e^{-s}T(s) y \, ds\\
		 & = & e^{t}\int_t^\infty e^{-r}T(r) y \, dr.\\
		 & \to & \int_0^\infty e^{-r}T(r) y \,dr =x\mbox{ as }t\to 0+.
	\end{eqnarray*}  
Since $T(t)_{|\mul A}=0$ and $\mul A +\dom A$ is dense in $X$, $(i)$ follows.
\end{proof}
\begin{rem}
	The analogous assertion of Corollary~\ref{cor:5.5}  does not hold for $t\to\infty$. In fact, if $A$ generates a bounded $C_0$-semigroup $T$, then 
	$\overline{\ran A}\oplus \ker A=X$ if and only if $\lim_{t\to\infty}\frac{1}{t}\int_0^t T(r)x\, dr$ converges as $t\to\infty$ for all $x\in X$; but this does not imply strong converegnce of $T(t)$ as $t\to\infty$, even if $X$ is reflexive (consider the shift semigroup on $L^2(\R)$).   
\end{rem}
\begin{cor}\label{cor:5.6}
Assume that $X$ is reflexive. Let $A\subset X\times X$ be a closed relation. The following assertions are equivalent:
\begin{enumerate}
\item[(i)] $A$ generates a contractive, strongly continuous semigroup $T:(0,\infty)\to {\mathcal L}(X)$;
\item[(ii)] $A$ is $m$-dissipative. 	
\end{enumerate}	  
In that case $T(t)$ converges strongly as $t\to 0+$.  
\end{cor}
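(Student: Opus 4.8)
The plan is to prove the two implications separately, with reflexivity entering only in the passage from $m$-dissipativity to the existence of the semigroup, and to deduce the final convergence assertion from Corollary~\ref{cor:5.5}. The implication $(i)\Rightarrow(ii)$ needs no reflexivity: if $A$ generates a contractive strongly continuous semigroup $T$, then $\|T(t)\|\le 1$ for all $t>0$, so Theorem~\ref{th:5.2} (with $M=1$) gives $(0,\infty)\subset\rho(A)$ together with $\|\lambda R(\lambda,A)\|\le 1$ for all $\lambda>0$, and by Proposition~\ref{prop:4.3} this is precisely $m$-dissipativity, as already recorded in the remark following Theorem~\ref{th:5.2}.

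For $(ii)\Rightarrow(i)$ I would first extract a splitting of $X$. Since $A$ is $m$-dissipative, Proposition~\ref{prop:4.3} gives $(0,\infty)\subset\rho(A)$ and $\sup_{\lambda>0}\|\lambda R(\lambda,A)\|\le 1$, so the reflexive case of Proposition~\ref{prop:5.4}(b) applies: the strong limit $P:=\lim_{\lambda\to\infty}\lambda R(\lambda,A)$ exists, it is the projection onto $\overline{\dom A}$ along $\mul A$, and $X=\mul A\oplus\overline{\dom A}$. Write $X_1:=\overline{\dom A}$ and $X_0:=\mul A=\ker P$; as a strong limit of contractions, $\|P\|\le 1$. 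By Lemma~\ref{lem:3.3n} one has $\ker R(\lambda,A)=\mul A=X_0$, whence $R(\lambda,A)=R(\lambda,A)P$, while $\ran R(\lambda,A)=\dom A\subset X_1$.

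The decisive step is to realize the resolvent on $X_1$ as that of a densely defined dissipative operator. I would set $R_1(\lambda):=R(\lambda,A)|_{X_1}\in\mathcal L(X_1)$, which is well defined because $R(\lambda,A)$ maps into $\dom A\subset X_1$. As a restriction of a pseudo-resolvent, $R_1$ is again a pseudo-resolvent, with $\ker R_1(\lambda)=X_0\cap X_1=\{0\}$ and $\ran R_1(\lambda)=\ran R(\lambda,A)=\dom A$ dense in $X_1$. Proposition~\ref{prop:3.4} then yields a unique relation $A_1$ on $X_1$ with $R(\lambda,A_1)=R_1(\lambda)$; Lemma~\ref{lem:3.3n} forces $\mul A_1=\ker R_1(\lambda)=\{0\}$, so $A_1$ is an operator with dense domain $\dom A_1=\ran R_1(\lambda)=\dom A$ and $\|\lambda R(\lambda,A_1)\|\le 1$. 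Thus $A_1$ is a densely defined dissipative operator on $X_1$ satisfying the range condition, and the classical Lumer--Phillips Theorem~\ref{th:1.1} provides a contractive $C_0$-semigroup $T_1$ on $X_1$ generated by $A_1$. I would then define $T(t):=T_1(t)P$ for $t>0$: the semigroup law follows from $P|_{X_1}=\Id$, strong continuity on $(0,\infty)$ from that of $T_1$, and $\|T(t)\|\le\|T_1(t)\|\,\|P\|\le 1$. Computing $\int_0^\infty e^{-\lambda t}T(t)x\,dt=R(\lambda,A_1)Px=R(\lambda,A)Px=R(\lambda,A)x$ and invoking the uniqueness in Theorem~\ref{th:5.2}, the generator of $T$ is $A$, which is $(i)$. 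Finally, whenever $(i)$ and $(ii)$ hold, $A$ generates a contractive (hence bounded) strongly continuous semigroup and $X=\mul A\oplus\overline{\dom A}$ by reflexivity, so Corollary~\ref{cor:5.5} gives the strong convergence of $T(t)$ to $P$ as $t\to 0+$.

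The main obstacle, and the only place requiring more than bookkeeping, is the reduction to the operator $A_1$ on $X_1$: one must verify simultaneously that restricting the resolvent to $X_1$ removes exactly the multivalued part, produces an \emph{injective} pseudo-resolvent with \emph{dense} range, and preserves the contraction bound, so that the classical generation theorem becomes applicable. Everything else is a transfer through the projection $P$, whose contractivity $\|P\|\le 1$ is precisely what keeps the reassembled semigroup $T(t)=T_1(t)P$ contractive on all of $X$.
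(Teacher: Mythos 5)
Your proof is correct and follows essentially the same route as the paper: split $X=\mul A\oplus\overline{\dom A}$ via Proposition~\ref{prop:5.4}(b), reduce to a densely defined $m$-dissipative operator $A_1$ on $X_1=\overline{\dom A}$, generate $T_1$ there by the classical theorem, and extend by the projection $P$. The only cosmetic difference is that you realize $A_1$ through the restricted pseudo-resolvent and Proposition~\ref{prop:3.4}, whereas the paper takes $A_1=A\cap(X\times X_1)$ directly and invokes Hille--Yosida rather than Lumer--Phillips; these yield the same operator and semigroup.
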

\begin{proof}
$(i)\Rightarrow (ii)$ We have noticed this after Theorem~\ref{th:5.2}; reflexivity is not needed for this implication. \\
$(ii)\Rightarrow (i)$ By Proposition~\ref{prop:5.4}, $X=\mul A\oplus \overline{\dom A}$. Consider $X_1=\overline{\dom A}$ and 
$A_1=A\cap (X\times X_1)$. Then $A_1$ is an $m$-dissipative operator which is densely defined. In fact, let $z\in \overline{\dom A}$. Then $\lim_{\lambda \to\infty} \lambda R(\lambda,A)z=z$ by part (a)1) of the proof of Proposition~\ref{prop:5.4}   and $R(\lambda ,A)z\in \dom A_1$ for all $\lambda >0$.   By the Hille--Yosida Theorem, $A_1$ generates a contractive $C_0$-semigroup $T_1$ on $X_1$. Then $T(t)(x_0+x_1):=T_1(t)x_1$ defines a strongly continuous semigroup $T:(0,\infty)\to{\mathcal L}(X)$ whose generator is $A$.  The strong convergence of $T(t)$ as $t\to 0+$ follows from Corollary~\ref{cor:5.5}. 
\end{proof}
From the proof of Proposition~\ref{prop:5.4} we also deduce the following corollary. 
\begin{cor}\label{cor:5.8new}
	  Let $A$ be a dissipative relation on a  Banach space $X$. Let $X_1=\overline{\dom A}$ and let $A_1=A\cap (X\times X_1)$ be the part of $A$ in $X_1$. Then $A_1$ is a densely defined $m$-dissipative operator on $X_1$, i.e. $A_1$ generates a contractive $C_0$-semigroup $T_1$ on $X_1$. In particular, each densely defined $m$-dissipative relation is an operator.    
\end{cor}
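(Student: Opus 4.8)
The argument will need the resolvent of $A$, so I read the hypothesis as $m$-dissipativity (a merely dissipative relation need not satisfy $\rho(A)\neq\emptyset$, and the range condition below would then fail). Proposition~\ref{prop:4.3} then supplies $(0,\infty)\subset\rho(A)$ with $\|\lambda R(\lambda,A)\|\le 1$ for all $\lambda>0$, so Proposition~\ref{prop:5.4} and the two preliminary facts established in its proof are available for $A$. The plan is to check, one after another, that $A_1$ is an operator, that $\dom A_1$ is dense in $X_1$, that $A_1$ is dissipative, and that $A_1$ satisfies the range condition on $X_1$; the generation statement is then immediate from the Lumer--Phillips Theorem~\ref{th:1.1}.

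First I would observe that $A_1$ is an operator. Indeed $\mul A_1=\{y\in X_1:(0,y)\in A\}=\mul A\cap\overline{\dom A}$, and this intersection is $\{0\}$ by part (a)2) of the proof of Proposition~\ref{prop:5.4}: if $x\in\mul A$ then $R(\lambda,A)x=0$ by Lemma~\ref{lem:3.3n}, whereas $\lambda R(\lambda,A)x\to x$ whenever $x\in\overline{\dom A}$, forcing $x=0$.

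The core of the argument is a single computation that yields both density and surjectivity at once. Fix $z\in X_1=\overline{\dom A}$ and $\lambda>0$, and set $x:=R(\lambda,A)z\in\dom A$. Then $(x,\lambda x-z)\in A$, and since $x\in\dom A\subseteq X_1$ and $z\in X_1$, the second component $\lambda x-z$ lies in the subspace $X_1$; hence $(x,\lambda x-z)\in A\cap(X\times X_1)=A_1$. Because $A_1$ is an operator this reads $A_1x=\lambda x-z$, so $(\lambda-A_1)x=z$; as $z$ was arbitrary, $\ran(\lambda-A_1)=X_1$ for every $\lambda>0$. The same $x$ also gives density: $\lambda x=\lambda R(\lambda,A)z\in\dom A_1$ (a subspace), and by part (a)1) of the proof of Proposition~\ref{prop:5.4} one has $\lambda R(\lambda,A)z\to z$ as $\lambda\to\infty$, so $z\in\overline{\dom A_1}$ and $\dom A_1$ is dense in $X_1$.

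Finally, $A_1\subseteq A$ is dissipative because $A$ is, so together with the range condition just proved $A_1$ is a densely defined $m$-dissipative operator on $X_1$; Theorem~\ref{th:1.1} then produces the contractive $C_0$-semigroup $T_1$. For the last assertion, if $A$ is itself densely defined then $X_1=\overline{\dom A}=X$ and $A_1=A\cap(X\times X)=A$, so $A$ inherits the operator property. I expect the only genuine obstacle to be the verification that $\lambda x-z\in X_1$ --- i.e.\ that the output of $A$ at $x=R(\lambda,A)z$ stays inside $\overline{\dom A}$; once this is seen, everything else is bookkeeping built on Proposition~\ref{prop:5.4}.
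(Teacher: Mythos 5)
Your proof is correct and follows essentially the same route as the paper's: both rest on part (a)1) of the proof of Proposition~\ref{prop:5.4} and on the key observation that $(R(\lambda,A)z,\lambda R(\lambda,A)z-z)\in A\cap(X\times X_1)=A_1$ for $z\in X_1$, which simultaneously gives density of $\dom A_1$ and $(0,\infty)\subset\rho(A_1)$. Your reading of the hypothesis as $m$-dissipativity is the right one — the paper's own proof also uses $R(\lambda,A)$ and thus tacitly assumes it.
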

\begin{proof}
From Part (a) 1) in the proof of Proposition~\ref{prop:5.4} we know that $\lim_{\lambda\to\infty}\lambda R(\lambda,A)x=x$ for all $x\in X_1$.  Observe that for $x\in X_1$, $\lambda>0$, 
\[ (R(\lambda, A)x,\lambda R(\lambda, A)x-x)\in A\cap (X\times X_1)=A_1.\] 
Thus $\lambda R(\lambda, A)x\in \dom A_1$. This shows that $\dom A_1$ is dense in $X_1$. 
Moreover, $A_1$ is an operator. Since $R(\lambda,A)X_1\subset \dom A\subset X_1$ for all $\lambda >0$, it follows that $(0,\infty)\subset \rho(A_1)$. 
\end{proof}
Corollary~\ref{cor:5.6} is not true on arbitrary Banach spaces. An $m$-dissipative relation does not generate a strongly continuous semigroup, in general. However, it always generates an $1$-Lipschitz continuous integrated semigroup. This object is motivated by the following consideration.

Assume that $A\subset X\times X$ generates a contractive, strongly continuous   semigroup $T$ and let $S(t)x=\int_0^t T(s)x ds$ ($x\in X,$ $t\geq 0$).  Integration by parts shows that 
\[  R(\lambda, A)=\lambda \int_0^\infty e^{-\lambda t} S(t) dt \quad (\lambda >0)\]
where   $S(t)x=\int_0^t T(s)x ds$ for all $x\in X$, $t>0$. It turns out that this representation of the resolvent of an $m$-dissipative relation remains true on arbitrary Banach spaces. This is the content of the following two generation theorems. Let $X$ be an arbitary Banach space. 
\begin{thm}\label{th:5.5}
	Let $A\subset X\times X$ be an $m$-dissipative relation. Then there exists a unique function $S:[0,\infty)\to{\mathcal L}(X)$ such that 
	\begin{itemize}
		\item[(a)] $S(0)=0$
		\item[(b)] $\| S(t)-S(s)\|\leq |t-s|$ $(s,t\geq 0)$
		\item[(c)] $ R(\lambda, A)=\lambda \int_0^\infty e^{-\lambda t} S(t) dt \quad (\lambda >0)$. Moreover, the following functional equation holds:
		\item[(d)] $S(t)S(s)=\int_t^{t+s} S(r)dr - \int_0^{s} S(r)dr$ for all $t,s\geq 0$.  
	\end{itemize} 
We call $S$ the \emph{integrated semigroup} generated by $A$. 
\end{thm}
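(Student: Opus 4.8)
The plan is to produce $S$ from the vector-valued real representation (Widder--Arendt) theorem \cite[Theorem 2.4.1]{ABHN11}, applied with values in the Banach space $\mathcal{L}(X)$, and then to extract the functional equation (d) from the resolvent identity \eqref{eq:3.3}. First I would set $r(\lambda):=R(\lambda,A)$ and check that $r$ meets the hypotheses of that theorem. Since $A$ is $m$-dissipative, Proposition~\ref{prop:4.3} gives $(0,\infty)\subset\rho(A)$ together with $\|\lambda R(\lambda,A)\|\le 1$ for all $\lambda>0$; by Proposition~\ref{prop:3.1} the resolvent is analytic on $(0,\infty)$, hence $C^\infty$, and \eqref{eq:3.2} yields $R^{(k)}(\lambda)=(-1)^k k!\,R(\lambda,A)^{k+1}$. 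Therefore
\[
\Bigl\| \frac{\lambda^{k+1}}{k!}\, r^{(k)}(\lambda)\Bigr\|
=\bigl\|\bigl(\lambda R(\lambda,A)\bigr)^{k+1}\bigr\|
\le \|\lambda R(\lambda,A)\|^{k+1}\le 1
\]
for all $\lambda>0$ and $k\in\N_0$, which is exactly the Widder growth bound with constant $M=1$.

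The representation theorem then furnishes a unique Lipschitz function $S\colon[0,\infty)\to\mathcal{L}(X)$ with $S(0)=0$ and Lipschitz constant $\le 1$ such that $r(\lambda)=\int_0^\infty e^{-\lambda t}\,dS(t)$. Because $S(0)=0$ and $S$ is Lipschitz, integration by parts rewrites this as $R(\lambda,A)=\lambda\int_0^\infty e^{-\lambda t}S(t)\,dt$, so (a), (b) and (c) hold at once; note that $S$ is Lipschitz in \emph{operator} norm, which is stronger than strong continuity. Uniqueness of $S$ is inherited from the uniqueness built into the representation theorem (equivalently, from the uniqueness theorem for Laplace transforms, e.g. \cite[Theorem 1.7.3]{ABHN11}).

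It remains to prove (d), and here I would compare the double Laplace transforms in the variables $t$ and $s$ of the two sides. Writing $\hat S(\lambda)=\int_0^\infty e^{-\lambda t}S(t)\,dt=\tfrac1\lambda R(\lambda,A)$, bilinearity and continuity of operator multiplication give that the left-hand side $S(t)S(s)$ transforms to $\hat S(\lambda)\hat S(\mu)=\tfrac{1}{\lambda\mu}R(\lambda,A)R(\mu,A)$. For the right-hand side, substituting $r=t+u$ in $\int_t^{t+s}S(r)\,dr=\int_0^s S(t+u)\,du$ and carrying out the two integrations expresses the first term through $\hat S(\lambda)$ and $\hat S(\mu)$; after collecting the terms the transform of the right-hand side comes out as $\frac{1}{\lambda\mu(\lambda-\mu)}\bigl(R(\mu,A)-R(\lambda,A)\bigr)$ for $\lambda\ne\mu$. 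The resolvent identity \eqref{eq:3.3} converts $R(\mu,A)-R(\lambda,A)$ into $(\lambda-\mu)R(\lambda,A)R(\mu,A)$, so both sides share the common transform $\tfrac{1}{\lambda\mu}R(\lambda,A)R(\mu,A)$; since both sides of (d) are continuous in $(t,s)$, the uniqueness theorem for (iterated) Laplace transforms forces equality. I expect this final step to be the main obstacle: the honest bookkeeping in transforming $\int_t^{t+s}S(r)\,dr$, and the careful invocation of Laplace-transform uniqueness (if one prefers, by testing against $x\in X$ and $x'\in X'$ to reduce to the scalar case), are where the real work lies, whereas the existence of $S$ satisfying (a)--(c) is essentially a direct application of the representation theorem.
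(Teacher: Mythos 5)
Your proposal is correct and follows essentially the same route as the paper: the Widder growth bound $\|\lambda^{k+1}R^{(k)}(\lambda)\|/k!\le 1$ obtained from \eqref{eq:3.2} and Proposition~\ref{prop:4.3}, then the vector-valued real representation theorem for (a)--(c), and the functional equation (d) via the double Laplace transform and the resolvent identity \eqref{eq:3.3} (which is exactly the argument of the cited \cite[Proposition 3.2.4]{ABHN11} that the paper delegates to the reference). Your explicit transform bookkeeping for the right-hand side of (d) checks out.
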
    
\begin{proof}
It follows from \eqref{eq:3.2} that 
\[   \frac{R(\lambda ,A)^{(k)}}{k!}=(-1)^k R(\lambda, A)^{k+1}\]
	for $\lambda>0$, $k\in \N_0$.  Thus 
	\[  \frac{ \| \lambda^{k+1} R(\lambda, A)^{(k)}\| }{k!}\leq 1\mbox{ for all } \lambda >0, \, \, k\in\N_0 . \]
	The vector-valued version of Widder's theorem (\cite[Theorem 2.4.1 and Theorem 1.2.2]{ABHN11} or 
 	\cite[Theorem 1.1]{Ar87}) yields a function $S:[0,\infty) \to {\mathcal L}(X)$ such that $(a)$, $(b)$ and $(c)$ hold. It follows from the proof of \cite[Proposition 3.2.4]{ABHN11} (see also \cite[Theorem 3.1]{Ar87}) that $S$ satisfies $(d)$.  
 \end{proof}
 Note that $A$ is an operator if and only if $R(\lambda ,A)$ is injective for some (equivalently all $\lambda >0$), and this is equivalent to 
\begin{equation}\label{eq:5.2}
S(t)x=0\mbox{ for all }t\geq 0\Rightarrow x=0
\end{equation}   
by the Uniqueness Theorem for Laplace  transform or again the real representation theorem \cite[Theorem 2.2.1]{ABHN11}.  \\

Assume that $X$ has the Radon--Nikodym Property; i.e. each Lipschitz-continuous function $F:[0,\tau]\to X$ is differentiable almost everywhere for some (equivalently all) $\tau>0$. Then, by an argument in \cite[Lemma 6.3]{Ar87}, the functional equation in Theorem~\ref{th:5.5} (d)  implies that $S(\cdot)x\in C^1((0,\infty);X)$ for all $x\in X$. This leads to the following result. 
\begin{thm}\label{th:5.8n}
Assume that $X$ has the Radon--Nikodym Property. Let $A\subset X\times X$ be an $m$-dissipative relation. Then $A$ generates a strongly continuous semigroup $T:(0,\infty)\to{\mathcal L}(X)$ satisfying $\|T(t)\|\leq 1$ for all $t>0$.   
\end{thm}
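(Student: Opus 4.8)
The plan is to produce the semigroup as the derivative of the integrated semigroup $S$ supplied by Theorem~\ref{th:5.5}, and then to extract every required property from the functional equation Theorem~\ref{th:5.5}(d). By the observation preceding the statement, the Radon--Nikodym Property together with (d) guarantees that $S(\cdot)x\in C^1((0,\infty);X)$ for every $x\in X$, via the argument of \cite[Lemma 6.3]{Ar87}. I would therefore set $T(t)x:=\frac{d}{dt}S(t)x$ for $t>0$ and $x\in X$. For fixed $t>0$ this $T(t)$ is the strong limit of the difference quotients $h^{-1}(S(t+h)-S(t))$, and each of these has norm at most $1$ by the Lipschitz estimate Theorem~\ref{th:5.5}(b), since $\|S(t+h)x-S(t)x\|\leq |h|\,\|x\|$. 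Hence $T(t)$ is linear with $\|T(t)\|\leq 1$, so $T(t)\in{\mathcal L}(X)$, and strong continuity of $t\mapsto T(t)x$ on $(0,\infty)$ is immediate because $S(\cdot)x$ is continuously differentiable.

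Next I would establish the semigroup law by differentiating the functional equation twice. Fixing $s>0$ and differentiating
\[ S(t)S(s)x=\int_t^{t+s}S(r)x\,dr-\int_0^s S(r)x\,dr \]
with respect to $t$ yields, by the fundamental theorem of calculus on the right (using continuity of $S(\cdot)x$) and $C^1$-regularity of $S(\cdot)y$ with $y=S(s)x$ on the left,
\[ T(t)S(s)x=S(t+s)x-S(t)x\qquad(t,s>0). \]
Fixing now $t>0$ and differentiating this identity in $s$, the bounded operator $T(t)$ passes through the $s$-derivative so the left-hand side becomes $T(t)T(s)x$, while the right-hand side becomes $T(t+s)x$ by the chain rule. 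This gives $T(t)T(s)=T(t+s)$ for all $t,s>0$, which is exactly the semigroup property of Definition~\ref{def:5.1}.

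Finally I would identify $A$ as the generator of $T$ in the sense of Theorem~\ref{th:5.2}. For $\lambda>0$ and $x\in X$ the bound $\|T(t)x\|\leq\|x\|$ makes $e^{-\lambda t}T(t)x$ integrable on $(0,\infty)$, and integration by parts on $[\varepsilon,N]$ gives
\[ \int_\varepsilon^N e^{-\lambda t}S'(t)x\,dt=\bigl[e^{-\lambda t}S(t)x\bigr]_\varepsilon^N+\lambda\int_\varepsilon^N e^{-\lambda t}S(t)x\,dt. \]
Letting $\varepsilon\to 0$ and $N\to\infty$, the boundary terms vanish because $S(0)=0$ and $\|S(t)x\|\leq t\|x\|$, so together with Theorem~\ref{th:5.5}(c) one obtains
\[ \int_0^\infty e^{-\lambda t}T(t)x\,dt=\lambda\int_0^\infty e^{-\lambda t}S(t)x\,dt=R(\lambda,A)x. \]
Thus the generator of $T$ has the same resolvent as $A$ on $(0,\infty)$, and hence coincides with $A$ by the uniqueness in Lemma~\ref{lem:3.3}(b). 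Combined with $\|T(t)\|\leq 1$, this completes the argument.

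I expect the only delicate point to be the twofold differentiation of the operator-valued functional equation: one must read the identities off strongly (evaluated at a fixed $x$), check that the relevant side is genuinely $C^1$ in the active variable, and justify pulling the bounded operator $T(t)$ through the $s$-derivative. The genuine analytic content, namely the passage from almost-everywhere differentiability to $C^1$-regularity of $S(\cdot)x$, is precisely where the Radon--Nikodym Property is used and is borrowed from \cite{Ar87}; once it is granted, the remaining steps are routine.
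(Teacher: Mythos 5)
Your proof is correct and follows essentially the same route as the paper, which simply states that the proof of \cite[Theorem 6.2]{Ar87} carries over to relations; you have in effect written out the details of that argument (define $T=S'$, differentiate the functional equation twice to get the semigroup law, and identify the generator by integration by parts in the Laplace transform), relying on the same key input, namely the $C^1$-regularity of $S(\cdot)x$ from \cite[Lemma 6.3]{Ar87} via the Radon--Nikodym Property. All the individual steps (the norm bound from the difference quotients, passing $T(t)$ through the $s$-derivative, the vanishing boundary terms, and the uniqueness of the relation with a given resolvent via Lemma~\ref{lem:3.3}) check out.
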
 
\begin{proof}
	The proof of \cite[Theorem 6.2]{Ar87} works also for relations.  
\end{proof}
Reflexive and separable dual Banach spaces have the Radon--Nikodym Property (see \cite[Section 1.2]{ABHN11}). In the situation of Theorem~\ref{th:5.8n}, the strong limit of $T(t)$ for $t\to 0+$ may not exist if $X$ is not reflexive. 
In fact, in  \cite[Example 6.4]{Ar87} a strongly continuous non-degenerate semigroup $T$ on a  separable dual Banach space  is constructed, which is not a $C_0$-semigroup.   Then $|||x|||:=\sup_{t>0} \|T(t)x\|$ defines an equivalent norm on $X$ for which $T$ is contractive. Thus the generator of $T$ is an $m$-dissipative  operator on $(X,|||\cdot|||)$.  \\

Next we prove the converse of Theorem~\ref{th:5.5}.
\begin{thm}\label{th:5.6}
Let $S:[0,\infty)\to {\mathcal L}(X)$ be a function such that $(a)$, $(b)$ and $(d)$ hold. Then there exists a unique $m$-dissipative relation $A$ such that $(c)$ holds.     
\end{thm}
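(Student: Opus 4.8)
The plan is to build the candidate resolvent directly from $S$ by Laplace transform and then invoke the correspondence between pseudo-resolvents and relations established in Section~\ref{sec:3}. First I would note that taking $s=0$ in $(b)$ together with $(a)$ gives $\|S(t)\|\leq t$, so that $t\mapsto e^{-\lambda t}S(t)$ is Bochner integrable in $\LX$ for every $\lambda>0$ and the definition
\[ R(\lambda):=\lambda\int_0^\infty e^{-\lambda t}S(t)\,dt\qquad(\lambda>0) \]
makes sense. The same bound yields $\|R(\lambda)\|\leq \lambda\int_0^\infty t e^{-\lambda t}\,dt=\tfrac1\lambda$, hence the contraction estimate $\|\lambda R(\lambda)\|\leq 1$ for all $\lambda>0$.

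The decisive step is to show that $R\colon(0,\infty)\to\LX$ is a pseudo-resolvent, and this is exactly where the functional equation $(d)$ enters. Writing $f(\lambda)=\int_0^\infty e^{-\lambda t}S(t)\,dt=R(\lambda)/\lambda$, I would first rewrite $(d)$ in the equivalent form $S(t)S(s)=\int_0^s\big(S(t+r)-S(r)\big)\,dr$, obtained by the substitution $r\mapsto r-t$ in the first integral of $(d)$. Then one computes $f(\lambda)f(\mu)=\int_0^\infty\!\int_0^\infty e^{-\lambda t-\mu s}S(t)S(s)\,ds\,dt$ by substituting this expression and applying Fubini's theorem twice, the Lipschitz bound on $S$ justifying all interchanges of integration. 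A bookkeeping of the resulting elementary $t$- and $r$-integrals collapses to
\[ \lambda\mu\,f(\lambda)f(\mu)=\frac{\lambda}{\mu-\lambda}\big(f(\lambda)-f(\mu)\big)-f(\mu)\qquad(\lambda\neq\mu), \]
which, after substituting $R(\lambda)=\lambda f(\lambda)$ and simplifying the coefficients, is precisely the resolvent identity $R(\lambda)-R(\mu)=(\mu-\lambda)R(\lambda)R(\mu)$. This double-integral computation is the main obstacle: it is routine in principle but requires careful use of Fubini and organization of the elementary integrals. Since $S$ is operator-valued and possibly degenerate, it is cleanest not to rely on any injectivity; alternatively one may invoke the proof of \cite[Proposition 3.2.4]{ABHN11}, which is carried out for operator-valued $S$ and uses only $(a)$, $(b)$ and $(d)$.

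Once $R$ is known to be a pseudo-resolvent, Proposition~\ref{prop:3.4} furnishes a unique closed relation $A$ with $(0,\infty)\subset\rho(A)$ and $R(\lambda,A)=R(\lambda)$ for all $\lambda>0$; this is precisely condition $(c)$. The contraction estimate $\|\lambda R(\lambda,A)\|\leq 1$ then gives, via Proposition~\ref{prop:4.3}, that $A$ is $m$-dissipative. Finally, uniqueness of $A$ among $m$-dissipative relations satisfying $(c)$ is immediate: $(c)$ prescribes $R(\lambda,A)$ on all of $(0,\infty)$, and by Proposition~\ref{prop:3.4} (equivalently Lemma~\ref{lem:3.3}(b)) a relation is determined by its resolvent at a single point of $\rho(A)$. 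This completes the plan.
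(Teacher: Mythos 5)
Your proposal is correct and follows essentially the same route as the paper: define $R(\lambda)=\lambda\widehat{S}(\lambda)$, obtain the contraction bound $\|\lambda R(\lambda)\|\leq 1$ from $(a)$ and $(b)$, verify the pseudo-resolvent identity from the functional equation $(d)$ (the paper simply cites the proof of \cite[Proposition 3.2.4]{ABHN11}, which you also offer as an alternative to your explicit double Laplace transform computation, and that computation checks out), and then invoke Proposition~\ref{prop:3.4}. Your explicit treatment of uniqueness and of the $m$-dissipativity conclusion via Proposition~\ref{prop:4.3} only makes explicit what the paper leaves implicit.
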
 
\begin{proof}
Let $R(\lambda):=\int_0^\infty e^{-\lambda t} S(t)dt $ for $\lambda >0$. Then $R(\lambda) \in {\mathcal L}(X)$ and $\| \lambda R(\lambda)\|\leq 1$ for all $\lambda>0$. In fact, let $x\in X$, $x'\in X'$ such that 
$\|x\|\leq 1, \|x'\|\leq 1$. Then there exists $f\in L^\infty (0,\infty)$ such that $\|f\|_\infty\leq 1$ and 
\[  \langle S(t)x,x'\rangle =\int_0^t f(s) ds \mbox{ for all }t\geq 0 \]
(see \cite[Proposition 1.2.3]{ABHN11}). Thus 
\[  |\langle x',\lambda R(\lambda) x\rangle | =\left|    \int_0^\infty \lambda e^{-\lambda t} f(t)dt \right| dt  \leq \int_0^\infty \lambda e^{-\lambda t} dt=1
\mbox{ for all }\lambda >0.\]    
By the proof of \cite[Proposition 3.2.4]{ABHN11}, $R:(0,\infty)\to{\mathcal L}(X)$ is a pseudo-resolvent. Now, by Proposition~\ref{prop:3.4} there exists a unique relation $A$ such that $(0,\infty)\subset \rho(A)$ and $R(\lambda)=(\lambda-A)^{-1}$ for all $\lambda >0$.    
\end{proof}	
Our next aim is to describe an evolutionary problem  governed by the integrated  semigroup $S$. 

We need the following version of the uniqueness theorem.  
\begin{thm}[Uniqueness theorem]\label{th:5.7}
Let $Z$ be a Banach space  and $Y\subset Z$ a closed subspace. Let $f:(0,\infty)\to Z$ be continuous such that $\|  \int_0^t f(s)ds \|\leq Me^{wt}$, where $w\in\R$, $M\geq 0$. Assume that 
\[  \widehat{f}(\lambda) :=\int_0^\infty  e^{-\lambda t} f(t)dt\in Y\mbox{ for all }\lambda >w.\]
Then $f(t)\in Y$ for all $t> 0$. 
\end{thm}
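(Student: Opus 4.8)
The plan is to prove this uniqueness theorem by reducing it to the scalar-valued version of the uniqueness theorem for the Laplace transform, using duality to test membership in the closed subspace $Y$. The key observation is that a closed subspace $Y\subset Z$ is weakly closed, so $z\in Y$ if and only if $\langle z',z\rangle=0$ for every $z'\in Y^\perp$. This converts the problem about the vector-valued function $f$ into a family of scalar statements.

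First I would fix an arbitrary $z'\in Y^\perp$ and define the scalar function $g(t):=\langle z',f(t)\rangle$ for $t>0$. Since $f$ is continuous, $g$ is continuous, and from the hypothesis $\|\int_0^t f(s)\,ds\|\leq Me^{wt}$ we get the bound $|\int_0^t g(s)\,ds|=|\langle z',\int_0^t f(s)\,ds\rangle|\leq \|z'\|Me^{wt}$, so the primitive of $g$ is exponentially bounded. The crucial step is to compute the Laplace transform of $g$: since $z'$ is a bounded linear functional it commutes with the (Bochner) integral, giving
\[
\widehat{g}(\lambda)=\int_0^\infty e^{-\lambda t} g(t)\,dt=\Big\langle z',\int_0^\infty e^{-\lambda t}f(t)\,dt\Big\rangle=\langle z',\widehat{f}(\lambda)\rangle=0
\]
for all $\lambda>w$, where the last equality holds because $\widehat{f}(\lambda)\in Y$ and $z'$ annihilates $Y$.

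Next I would invoke the scalar uniqueness theorem for the Laplace transform (the one-dimensional case, e.g. the cited \cite[Theorem 1.7.3]{ABHN11} or the real representation results in \cite[Chapter 1, 2]{ABHN11}): a continuous function whose primitive is exponentially bounded and whose Laplace transform vanishes identically on a right half-line must itself vanish. This yields $g(t)=0$, i.e. $\langle z',f(t)\rangle=0$, for all $t>0$. Since $z'\in Y^\perp$ was arbitrary, we conclude $f(t)\in {}^\perp(Y^\perp)=Y$ for every $t>0$, the last equality being a standard consequence of the Hahn--Banach theorem for the closed subspace $Y$.

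The main obstacle, such as it is, lies in making sure the scalar uniqueness theorem is applied with exactly the right hypotheses: it must accept a function whose antiderivative (rather than the function itself) is exponentially bounded, and conclude vanishing from the Laplace transform being zero only on $(w,\infty)$. This is precisely the form of the real Laplace-transform uniqueness theorem quoted in the preceding parts of the paper, so no genuinely new analytic input is needed; the entire content of the proof is the interchange of the functional $z'$ with the integral and the passage through $Y^\perp$. One should also note that the reduction requires $Y$ closed, which guarantees $Y={}^\perp(Y^\perp)$ and hence that the pointwise duality test detects membership in $Y$.
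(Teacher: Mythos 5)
Your proof is correct. It takes a route that is dual to, but not identical with, the paper's: the paper composes $f$ with the quotient map $q\colon Z\to Z/Y$ and applies the vector-valued uniqueness theorem \cite[Theorem 1.7.3]{ABHN11} to $q\circ f$ in the Banach space $Z/Y$, concluding $q(f(t))=0$ directly; you instead compose $f$ with each functional $z'\in Y^\perp$ and apply the scalar uniqueness theorem, recovering $f(t)\in Y$ from $Y={}^\perp(Y^\perp)$. Since $(Z/Y)'\cong Y^\perp$, the two reductions are Hahn--Banach mirror images of one another. What your version buys is that only the scalar Laplace-transform uniqueness theorem is needed (and you correctly identify the two hypotheses that must be matched: exponential bound on the primitive rather than on $f$, and vanishing of $\widehat{g}$ only on a half-line). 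What the paper's version buys is brevity --- two lines, with the closedness of $Y$ entering only through the fact that $Z/Y$ is a Banach space rather than through the bipolar identity. All the individual steps you give (continuity of $g$, the bound $\bigl|\int_0^t g\bigr|\leq \|z'\|Me^{wt}$, the interchange of $z'$ with the improper integral defining $\widehat{f}(\lambda)$) are sound, so there is no gap.
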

   \begin{proof}
   	Denote by $q:Z\to Z/Y$ the quotient map. Then $\widehat{q\circ f}(\lambda)=q\circ \widehat{f} (\lambda)=0$ for $\lambda >w$. Now the uniqueness theorem \cite[Theorem 1.7.3]{ABHN11} implies that $q(f(t))=0$ for all $t> 0$. 
   \end{proof}
Next we establish a fundamental formula for the integrated semigroup generated by an $m$-dissipative relation. The proof is very different from the proof in the operator case, cf. \cite[Lemma 3.2.2]{ABHN11}. 
\begin{prop}\label{prop:5.8}
Let $A$ be an $m$-dissipative relation and $S$ the integrated semigroup generated by $A$. Then 
\begin{equation}\label{eq:5.3}
	\left(   \int_0^t S(s) xds, S(t)x-tx \right)\in A \mbox{ for all }x\in X,\,\, t\geq 0.
\end{equation}	
\end{prop}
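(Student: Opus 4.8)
The plan is to reduce the assertion to a statement about Laplace transforms and then invoke the uniqueness theorem (Theorem~\ref{th:5.7}). Since $A$ is $m$-dissipative, Proposition~\ref{prop:4.3} gives $(0,\infty)\subset\rho(A)$; in particular $A$ is a closed subspace of $Z:=X\times X$. Fix $x\in X$ and consider the continuous curve
\[ f:(0,\infty)\to Z,\qquad f(t)=\left(\int_0^t S(s)x\,ds,\ S(t)x-tx\right). \]
From $S(0)=0$ and the Lipschitz estimate (b) one has $\|S(t)\|\leq t$, so $f$ has at most polynomial growth and $\|\int_0^t f(s)\,ds\|_Z\leq Me^{wt}$ for any $w>0$ and a suitable $M$. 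Thus Theorem~\ref{th:5.7}, applied with this $Z$ and the closed subspace $Y=A$, reduces the claim "$f(t)\in A$ for all $t>0$" to showing $\widehat f(\lambda)\in A$ for all $\lambda>0$; the case $t=0$ is trivial since $f(0)=(0,0)\in A$.

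First I would compute $\widehat f(\lambda)$ componentwise. Property (c) yields $\int_0^\infty e^{-\lambda t}S(t)x\,dt=\tfrac1\lambda R(\lambda,A)x$, and transforming the antiderivative once more gives the first component $\tfrac{1}{\lambda^2}R(\lambda,A)x$; for the second component I use $\int_0^\infty e^{-\lambda t}t\,dt=\lambda^{-2}$ to get $\tfrac1\lambda R(\lambda,A)x-\tfrac1{\lambda^2}x$. Hence, writing $w:=R(\lambda,A)x$,
\[ \widehat f(\lambda)=\left(\tfrac{1}{\lambda^2}\,w,\ \tfrac1\lambda\,w-\tfrac1{\lambda^2}\,x\right). \]

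The key observation is then algebraic: from $w=R(\lambda,A)x$ and \eqref{eq:3.4} we have $(w,\lambda w-x)\in A$, and since $A$ is a linear subspace, the scalar multiple $\lambda^{-2}(w,\lambda w-x)=\widehat f(\lambda)$ again belongs to $A$. This is exactly the hypothesis required by Theorem~\ref{th:5.7}, so $f(t)\in A$ for all $t>0$, which is precisely \eqref{eq:5.3}.

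I expect the only delicate point to be the bookkeeping around the uniqueness theorem: verifying the growth bound on $\int_0^t f$ and setting things up in $Z=X\times X$ with the closed subspace $Y=A$, so that "$\widehat f(\lambda)\in A$ for all $\lambda$" genuinely forces "$f(t)\in A$ for all $t$". The algebraic heart of the argument — rewriting $\widehat f(\lambda)$ as a scalar multiple of the known element $(w,\lambda w-x)\in A$ and exploiting the linearity of the relation — is short once the transforms have been computed, which is what makes this approach cleaner than the pointwise computation used in the operator case.
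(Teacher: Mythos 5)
Your proposal is correct and follows essentially the same route as the paper: both arguments compute the Laplace transform of $t\mapsto\bigl(\int_0^t S(s)x\,ds,\ S(t)x-tx\bigr)$ in $X\times X$, identify it (up to the scalar $\lambda^{-2}$) with the element $(R(\lambda,A)x,\lambda R(\lambda,A)x-x)\in A$, and conclude via the uniqueness theorem (Theorem~\ref{th:5.7}) applied to the closed subspace $A$ of $X\times X$. The transform computations and the growth bound you check are exactly the bookkeeping the paper leaves implicit, so there is nothing to add.
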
 
\begin{proof}
	For $\lambda >0$, $x\in X$, 
	\[  (R(\lambda ,A)x, \lambda R(\lambda, A)x-x)\in A.\]
	Note that 
	\[R(\lambda ,A)x=\lambda \int_0^\infty e^{-\lambda t} S(t) dt=\lambda^2 \int_0^\infty e^{-\lambda t}\int_0^t S(s) x\,ds\, dt  \]
	and $\lambda^2 \int_0^\infty e^{-\lambda t}t dt=1$.  Thus 
	\[  \lambda^2 \int_0^\infty  e^{-\lambda t}\left(   \int_0^t S(s)xds , S(t)x -tx \right)dt    \in A ,  \]
	where the integrand takes values in $X\times X$. Now the claim follows from  Theorem~\ref{th:5.7}.  
\end{proof}
Now we can describe $m$-dissipative relations by a well-posedness result. Let $A$ be a closed relation. Given $x\in X$ we consider the problem 
\begin{equation}\label{eq:5.4}
\dot{u} (t)\in A u(t) +x,\, \, t>0,\,\, u(0)=0.
\end{equation}	
A \emph{mild solution} of \eqref{eq:5.4} is a function $u\in  C([0,\infty),X)$ such that $u(0)=0$ and 
\[  \left(     \int_0^t u(s)ds, \,\, u(t)-tx\right) \in A\mbox{ for all }t\geq 0. \] 
If $u$ is a \emph{classical solution} of \eqref{eq:5.4}, i.e. $u\in C^1 ((0,\infty),X )\cap C([0,\infty),X)$ and $u$ satisfies   \eqref{eq:5.4}, then $u$ is a mild solution.  Conversely, a mild solution which is in $C^1((0,\infty), X)$ is a classical solution. 
 \begin{thm}\label{th:5.9}
 Assume that $A$ is an $m$-dissipative relation. Then for all $x\in X$ Problem \eqref{eq:5.4} has a unique mild solution $u$. In fact $u(t)=S(t)x$  for all  $t\geq 0$, where $S$ is the integrated semigroup generated by $A$. In particular, 
 \begin{equation}\label{eq:5.5}
 \| u(t)-u(s)\|\leq |t-s| \|x\|\mbox{ for all }s,t\geq 0. 	
 \end{equation}	  
 \end{thm}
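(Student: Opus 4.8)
The plan is to prove existence by exhibiting $u(t) = S(t)x$ as a mild solution, and then to prove uniqueness separately. For existence, Proposition~\ref{prop:5.8} already delivers almost exactly what is needed: it states that $\left(\int_0^t S(s)x\,ds,\, S(t)x - tx\right) \in A$ for all $x \in X$ and $t \geq 0$. Setting $u(t) := S(t)x$, the continuity of $u$ follows from property $(b)$ of Theorem~\ref{th:5.5} (indeed $\|S(t)x - S(s)x\| \leq |t-s|\,\|x\|$, which simultaneously gives continuity and the Lipschitz estimate \eqref{eq:5.5}), and $u(0) = S(0)x = 0$ by property $(a)$. The defining condition for a mild solution, namely $\left(\int_0^t u(s)\,ds,\, u(t) - tx\right) \in A$, is then precisely \eqref{eq:5.3}. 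So existence is immediate once I invoke the preceding proposition.

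The substantive content is uniqueness, and this is where I expect the main work to lie. Suppose $u$ and $v$ are two mild solutions and set $w = u - v$; by linearity of the relation $A$, the function $w$ satisfies $w(0) = 0$, $w \in C([0,\infty),X)$, and $\left(\int_0^t w(s)\,ds,\, w(t)\right) \in A$ for all $t \geq 0$ (the $tx$ terms cancel). The goal is to show $w \equiv 0$. The natural device is the resolvent: fix $\lambda > 0$, so that $\lambda \in \rho(A)$ and $\|\lambda R(\lambda,A)\| \leq 1$ by Proposition~\ref{prop:4.3}. Writing $W(t) = \int_0^t w(s)\,ds$, the membership $(W(t), w(t)) \in A$ means $w(t) \in A W(t)$, hence $(\lambda - A) W(t) \ni \lambda W(t) - w(t)$, which gives $W(t) = R(\lambda,A)(\lambda W(t) - w(t))$. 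The plan is to take Laplace transforms in $t$ of this identity and exploit the scalar factor $\lambda$ to obtain a contraction-type estimate forcing $w = 0$.

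Concretely, I would apply the Laplace transform to the relation $w(t) \in A W(t)$. Since $w$ is continuous with $W(t)$ bounded by $t\sup_{[0,t]}\|w\|$, the transforms $\widehat{w}(\mu)$ and $\widehat{W}(\mu) = \tfrac{1}{\mu}\widehat{w}(\mu)$ exist for $\mu$ large, and the closedness of $A$ together with the integral-average argument in the style of Theorem~\ref{th:5.7} (applied as in the proof of Proposition~\ref{prop:5.8}) yields $(\widehat{W}(\mu), \widehat{w}(\mu)) \in A$, i.e. $\widehat{w}(\mu) \in A\widehat{W}(\mu)$. Then $\widehat{W}(\mu) = R(\mu,A)\bigl(\mu\widehat{W}(\mu) - \widehat{w}(\mu)\bigr) = R(\mu,A)\bigl(\widehat{w}(\mu) - \widehat{w}(\mu)\bigr)=R(\mu,A)\cdot 0 = 0$ since $\mu \widehat{W}(\mu) = \widehat{w}(\mu)$. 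Therefore $\widehat{W}(\mu) = 0$ for all large $\mu$, and by injectivity of the Laplace transform (the Uniqueness Theorem, already cited as \cite[Theorem 1.7.3]{ABHN11}) we get $W \equiv 0$, whence $w = W' = 0$.

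The step I expect to be delicate is the passage through the Laplace transform of the relation membership: one must justify that $\widehat{w}(\mu) \in A\widehat{W}(\mu)$, i.e. that taking transforms commutes with membership in the closed relation $A$. This is exactly the phenomenon handled by Theorem~\ref{th:5.7}: one writes the Laplace integral as a limit of Riemann-type averages of the $X \times X$-valued pairs $\bigl(W(t), w(t)\bigr) \in A$, uses that $A$ is a closed subspace of $X \times X$ to pass the membership to the integral, and then invokes the uniqueness theorem to transfer the conclusion back. I would carry out this justification carefully, mirroring the device used in Proposition~\ref{prop:5.8}, rather than treating it as routine.
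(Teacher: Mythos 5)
Your existence argument is exactly the paper's: $u(t)=S(t)x$ is a mild solution by Proposition~\ref{prop:5.8}, and \eqref{eq:5.5} is Theorem~\ref{th:5.5}(b). The problem is your uniqueness argument. A mild solution is, by definition, only an element of $C([0,\infty),X)$; no growth condition whatsoever is imposed. Consequently the difference $w=u-v$ of two mild solutions is a priori an arbitrary continuous function, and the Laplace transforms $\widehat{w}(\mu)$ and $\widehat{W}(\mu)$ on which your entire argument rests need not exist for any $\mu$. Your justification --- ``$W(t)$ is bounded by $t\sup_{[0,t]}\|w\|$'' --- is circular: it bounds $W$ in terms of $w$, which is exactly the quantity whose growth is unknown. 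The same issue undermines the application of Theorem~\ref{th:5.7} at the end (that theorem explicitly assumes $\|\int_0^t f\|\le Me^{wt}$) and the Riemann-sum argument for passing membership in $A$ through the transform (the improper integrals must converge before one can speak of their limits lying in the closed subspace $A$). You flagged the relation-membership step as the delicate one, but the more basic obstruction is that the transform may simply not be defined.

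The gap is fixable, and in two ways. The cheap fix makes your Laplace transform unnecessary: from $(W(t),w(t))\in A$ with $W'=w$ and dissipativity, $\|W(t)\|\le\|W(t)-hw(t)\|=\|W(t-h)\|+o(h)$ for small $h>0$, so $t\mapsto\|W(t)\|$ is non-increasing; since $W(0)=0$ this gives $W\equiv 0$ directly. The paper takes a different route altogether: it observes that $W(t)\in\dom A$ forces $\dot W(t)\in\overline{\dom A}=:X_1$, hence $(W(t),\dot W(t))$ lies in the part $A_1=A\cap(X\times X_1)$, which by Corollary~\ref{cor:5.8new} generates a $C_0$-semigroup on $X_1$; the standard uniqueness result for classical solutions of $\dot W=A_1W$, $W(0)=0$ (which requires no growth hypothesis) then yields $W\equiv 0$. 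Either repair is fine; as written, your proof of uniqueness does not go through.
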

\begin{proof}
	Given $x\in X$, by \eqref{eq:5.3}, the function $u(\cdot)=S(\cdot)x $ is a mild solution of \eqref{eq:5.4}. In order to prove uniqueness, let $u$ be the difference of two mild solutions of \eqref{eq:5.4}. Let $w(t)=\int_0^t u(s)ds$. Then $(w(t),\dot{w}(t))\in A$ for all $t\geq 0$.  Since $w(t)\in \dom A$, it follows that  $\dot{w} (t)\in \overline{\dom A}=:X_1$. Thus $(w(t),\dot{w}(t)) \in A_1=A\cap (X\times X_1)$. Since by Corollary~\ref{cor:5.8new} the operator $A_1$ generates a $C_0$-semigroup $T$ on $X_1$, and since $w(0)=0$, it follows that $w(t)=0$  for all $t\geq 0$. This implies that $u(t)=0$ for all $t\geq 0$.   
\end{proof}
Also the converse of Theorem~\ref{th:5.9} holds, i.e. $m$-dissipative relations can be characterized by the well-posedness of Problem \eqref{eq:5.4}.  
\begin{thm}\label{th:5.10}
Let $A$ be a closed relation. Assume that for all $x\in X$ Problem~\eqref{eq:5.4} has a unique mild solution $u$.  Assume also that this solution satisfies \eqref{eq:5.5}.  Then $A$ is $m$-dissipative. 
\end{thm}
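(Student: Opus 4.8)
The plan is to reconstruct the integrated semigroup from the solution operator and then to read off $m$-dissipativity from Proposition~\ref{prop:4.3}. First I would define, for each $x\in X$ and $t\geq 0$, $S(t)x:=u(t)$, where $u$ is the unique mild solution of \eqref{eq:5.4} with datum $x$. Since $A$ is a subspace, any linear combination of mild solutions for $x_1,x_2$ is a mild solution for the corresponding combination of data, and uniqueness then forces $S(t)$ to be linear. The estimate \eqref{eq:5.5} together with $S(0)=0$ gives $\|S(t)x\|\leq t\|x\|$ and $\|S(t)-S(s)\|\leq |t-s|$, so $S(t)\in{\mathcal L}(X)$ and $S$ is $1$-Lipschitz. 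In particular $R(\lambda):=\lambda\int_0^\infty e^{-\lambda t}S(t)\,dt$ is a well-defined element of ${\mathcal L}(X)$ for every $\lambda>0$, with $\|\lambda R(\lambda)\|\leq \lambda^2\int_0^\infty e^{-\lambda t}t\,dt=1$.

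Next I would establish the range condition. By the definition of a mild solution, $\left(\int_0^t S(s)x\,ds,\ S(t)x-tx\right)\in A$ for every $t\geq 0$, the analogue of \eqref{eq:5.3}. Integrating this $A$-valued function against $\lambda^2 e^{-\lambda t}\,dt$ and using that $A$ is a closed subspace (so the integral stays in $A$), a Fubini computation identical to the one in the proof of Proposition~\ref{prop:5.8} yields $(R(\lambda)x,\lambda R(\lambda)x-x)\in A$ for all $x\in X$ and $\lambda>0$. Hence $x\in(\lambda-A)R(\lambda)x$, so that $\ran(\lambda-A)=X$ for every $\lambda>0$.

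The main obstacle is injectivity of $\lambda-A$, and here \eqref{eq:5.5} is decisive. Suppose $(z,\lambda z)\in A$ for some $\lambda>0$. I would check that $\tilde u(t):=\frac{1}{\lambda}(e^{\lambda t}-1)z$ satisfies $\tilde u(0)=0$ and, since $\int_0^t \tilde u(s)\,ds=\left(\frac{1}{\lambda^2}(e^{\lambda t}-1)-\frac{t}{\lambda}\right)z$ while $\tilde u(t)-tz=\lambda\left(\frac{1}{\lambda^2}(e^{\lambda t}-1)-\frac{t}{\lambda}\right)z$, the pair $\left(\int_0^t \tilde u,\ \tilde u(t)-tz\right)$ is a scalar multiple of $(z,\lambda z)$ and therefore lies in $A$. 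Thus $\tilde u$ is a mild solution of \eqref{eq:5.4} for $x=z$, so by uniqueness $S(t)z=\tilde u(t)=\frac{1}{\lambda}(e^{\lambda t}-1)z$. But $\|S(t)z\|\leq t\|z\|$ whereas $\frac{1}{\lambda}(e^{\lambda t}-1)>t$ for $t>0$; this forces $z=0$, so $\ker(\lambda-A)=\{0\}$.

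Finally, for every $\lambda>0$ the relation $\lambda-A$ is closed, injective and surjective, i.e. $(0,\infty)\subset\rho(A)$, and the right inverse $R(\lambda)$ built above must coincide with the resolvent, so $R(\lambda,A)=R(\lambda)$ and $\|\lambda R(\lambda,A)\|\leq 1$. Proposition~\ref{prop:4.3} then gives that $A$ is $m$-dissipative. I expect the only delicate routine points to be the justification that the (Bochner) integral of an $A$-valued function remains in the closed subspace $A$, and the bookkeeping showing that $\tilde u$ really solves the mild problem; the conceptual heart is the growth comparison $\frac{1}{\lambda}(e^{\lambda t}-1)>t$, which is exactly where the Lipschitz bound \eqref{eq:5.5} rules out eigenvectors of $A$ and thereby produces injectivity.
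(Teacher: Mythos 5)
Your proof is correct, and it is leaner than the one in the paper while sharing its two decisive ingredients. Both arguments define $S(t)x:=u(t)$ and both use the exponential solution $\tilde u(t)=\tfrac{1}{\lambda}(e^{\lambda t}-1)z$ clashing with the Lipschitz bound \eqref{eq:5.5} to kill $\ker(\lambda-A)$ (in the paper this appears as the verification that $(\lambda-A)^{-1}$ is an operator, which is the same statement). Where you diverge is in how the resolvent is produced: the paper first proves the functional equation \eqref{eq:5.7} for $S$ by a uniqueness argument, feeds it into the pseudo-resolvent machinery (the proof of \cite[Proposition 3.2.4]{ABHN11} together with Proposition~\ref{prop:3.4}) to manufacture an auxiliary $m$-dissipative relation $B$ with $R(\lambda,B)=\lambda\widehat S(\lambda)$, and only then identifies $A=B$ by Laplace-transforming the mild-solution inclusion. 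You observe that this detour is unnecessary for the statement at hand: Laplace-transforming $\bigl(\int_0^t S(s)x\,ds,\ S(t)x-tx\bigr)\in A$ already yields $(R(\lambda)x,\lambda R(\lambda)x-x)\in A$, hence surjectivity of $\lambda-A$, and together with injectivity and closedness this forces $R(\lambda,A)=\lambda\widehat S(\lambda)$ with $\|\lambda R(\lambda,A)\|\leq 1$, so Proposition~\ref{prop:4.3} applies directly. What the paper's longer route buys is the additional identification of $S$ as the integrated semigroup generated by $A$ in the sense of Theorems~\ref{th:5.5} and~\ref{th:5.6}, which is not required by Theorem~\ref{th:5.10} itself. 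Two small remarks: your derivation of $\|S(t)\|\leq t$ directly from \eqref{eq:5.5} and $S(0)=0$ replaces the paper's closed graph theorem argument and is simpler (linearity, which you also need, follows as you say from uniqueness and the fact that $A$ is a subspace); and the two routine points you flag (the Bochner integral of a continuous $A$-valued function with the appropriate decay remaining in the closed subspace $A$, and the bookkeeping for $\tilde u$) are indeed routine and check out.
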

\begin{proof}
For $x\in X$, let $S(t)x:=u(t)$ where $u$ is the mild solution of \eqref{eq:5.4}. Then, applying the closed graph theorem in the Fr\'echet space  $C([0,\infty);X)$, one sees that $S(t)\in {\mathcal L}(X)$, and moreover $S(0)=0$. Since $u$ satisfies \eqref{eq:5.5}, it follows that 
\begin{equation}\label{eq:5.6}
\|S(t)-S(s)\|\leq |t-s|\mbox{ for all }t,s\geq 0.
\end{equation}   
We show that 
\begin{equation}\label{eq:5.7}
S(t)S(s)=\int_s^{t+s} S(r)dr -\int_0^t S(r)dr.
\end{equation}
Let $s>0$, $x\in X$. Let 
\[  w(t) =\int_0^{t+s} S(r)xdr - \int_0^s S(r)xdr-\int_0^t S(r)xdr .\]
Note that 
\begin{equation}\label{eq:5.8}
\left(  \int_0^\tau S(r)x dr, S(\tau)x-\tau x  \right) \in A\mbox{ for all }\tau\geq 0.
\end{equation}
Using this for $\tau=t,s,t+s$, one deduces that 
\[  (w(t),\dot{w}(t)-S(s)x)=(w(t), S(t+s)x-S(t)x-S(s)x)\in A\mbox{ for all  }t\geq 0.\]
Thus $\dot{w}(t)\in Aw(t)+S(s)x$. 
This shows that $w(t)=S(t)S(s)x$ for all $t\geq 0$. Thus \eqref{eq:5.7} is proved.

Let $R(\lambda):=\lambda \widehat{S} (\lambda)$, $\lambda>0$, where 
\[   \widehat{S} (\lambda):=\int_0^\infty  e^{-\lambda t}S(t)dt.\]
 It follows from the proof of \cite[Proposition 3.2.4 ]{ABHN11} that $R:(0,\infty)\to{\mathcal L}(X)$ is a pseudo-resolvent. Thus there exists a closed relation $B\subset X\times X$ such that $(0,\infty)\subset \rho(B)$ and 
 $R(\lambda)=R(\lambda,B)$ for all $\lambda>0$. It follows from \eqref{eq:5.6} that $\|\lambda R(\lambda, B)\|=\|\lambda \widehat{S}(\lambda)\|\leq 1$ for all $\lambda >0$.  Thus $B$ is $m$-dissipative.
 
  We now show that $A=B$. Let $x\in X$. Taking Laplace transforms of \eqref{eq:5.8}, one obtains 
  \[   \left(     \frac{1}{\lambda} \widehat{S}(\lambda) x, \widehat{S} (\lambda)x-\frac{x}{\lambda^2}   \right) \in A\mbox{ for all }\lambda>0. \]
  Thus    \[   \left(     \frac{1}{\lambda} \widehat{S}(\lambda) x, \frac{x}{\lambda^2}   \right)\in (\lambda -A)\mbox{ for all }\lambda >0.\]   
  Since $(\lambda -A)\subset X\times X$ is a subspace, $(\lambda \widehat{S}(\lambda)x,x)\in (\lambda-A)$ for all $\lambda>0$, i.e. $(x,\lambda \widehat{S} (\lambda)x)\in (\lambda-A)^{-1}$ for all $\lambda >0$. We have shown that $(\lambda -B)^{-1}\subset (\lambda -A)^{-1}$ for all $\lambda >0$.  But $(\lambda -A)^{-1}$ is an operator. In fact, let $(0,x)\in (\lambda -A)^{-1}$. Then $(x,0)\in (\lambda -A)$ and  so there exists $y\in Ax$ such that  $\lambda x-y=0$. Let $u(t)=\frac{1}{\lambda} (e^{\lambda t} -1)x$. Then $u$ is a classical solution of \eqref{eq:5.4} and so a mild solution. Thus $u$ satisfies \eqref{eq:5.5} and so $\|\dot{u}\|\leq \|x\|$ for all $t\geq 0$; i.e. 
  $\|e^{\lambda t}x\|\leq \|x\|$ for all $t> 0$. This implies that $x=0$. We have shown that $(\lambda -A)^{-1}$ is an operator and so $(\lambda -B)^{-1}=(\lambda -A)^{-1}$ for all $\lambda >0$. This implies that $A=B$.     
\end{proof}
\begin{cor}\label{cor:new}
Let $A\subset X\times X$ be a closed relation. The following assertions are equivalent:
\begin{itemize}
	\item[(i)] $A$ generates a strongly continuous semigroup of contractions $T:(0,\infty)\to {\mathcal L}(X)$;
	\item[(ii)] for all $x\in X$, Problem \eqref{eq:5.4} has a unique classical solution $u$, and this solution satisfies \eqref{eq:5.5}.     
\end{itemize}  
\end{cor}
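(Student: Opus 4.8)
The plan is to treat the two implications separately, in each case passing through the integrated semigroup $S$ and the resolvent, and using that the classical and mild solution concepts are linked by the regularity remark preceding Theorem~\ref{th:5.9}: a mild solution lying in $C^1((0,\infty),X)$ is automatically classical.

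For $(i)\Rightarrow(ii)$, I would first note that a strongly continuous contraction semigroup has an $m$-dissipative generator (the observation recorded after Theorem~\ref{th:5.2}), so $A$ is $m$-dissipative. Theorem~\ref{th:5.9} then provides, for each $x\in X$, a unique mild solution $u=S(\cdot)x$ of \eqref{eq:5.4} satisfying \eqref{eq:5.5}, where $S$ is the integrated semigroup generated by $A$. It remains to upgrade mild to classical. I would show $S(t)x=\int_0^t T(s)x\,ds$ by comparing Laplace transforms: integration by parts gives $\lambda\int_0^\infty e^{-\lambda t}\bigl(\int_0^t T(s)x\,ds\bigr)\,dt=\int_0^\infty e^{-\lambda t}T(t)x\,dt=R(\lambda,A)x$ by Theorem~\ref{th:5.2}, which coincides with $\lambda\int_0^\infty e^{-\lambda t}S(t)x\,dt$ by Theorem~\ref{th:5.5}(c); the uniqueness theorem for the Laplace transform then forces the two functions to agree. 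Since $s\mapsto T(s)x$ is continuous on $(0,\infty)$, the primitive $u=S(\cdot)x$ lies in $C^1((0,\infty),X)$ with $\dot u(t)=T(t)x$, hence is a classical solution; uniqueness of the classical solution is inherited from uniqueness of the mild solution.

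For $(ii)\Rightarrow(i)$, I would set $S(t)x:=u(t)$, the unique classical solution with datum $x$, and reproduce the first half of the proof of Theorem~\ref{th:5.10}: the closed graph theorem gives $S(t)\in\mathcal L(X)$, one has $S(0)=0$, the bound \eqref{eq:5.5} yields $\|S(t)-S(s)\|\le|t-s|$, and the functional equation \eqref{eq:5.7} holds. The key construction is then the candidate semigroup $T(t):=\dot S(t)$ for $t>0$, which is well defined because every $S(\cdot)x$ is classical. The Lipschitz bound gives $\|T(t)\|\le 1$, strong continuity of $T$ is immediate from $S(\cdot)x\in C^1$, and the semigroup law follows by differentiating \eqref{eq:5.7} first in $t$, obtaining $T(t)S(s)x=S(t+s)x-S(t)x$, and then in $s$, obtaining $T(t)T(s)x=T(t+s)x$. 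Finally I would identify $A$ with the generator $A_T$ of $T$: integration by parts gives $R(\lambda,A_T)x=\int_0^\infty e^{-\lambda t}T(t)x\,dt=\lambda\widehat S(\lambda)x$, while Laplace-transforming the mild-solution identity \eqref{eq:5.8} shows $(x,\lambda\widehat S(\lambda)x)\in(\lambda-A)^{-1}$, i.e. $(\lambda-A_T)^{-1}\subset(\lambda-A)^{-1}$; the single-valuedness of $(\lambda-A)^{-1}$, proved by feeding the explicit classical solution $u(t)=\lambda^{-1}(e^{\lambda t}-1)z$ of a kernel element $z$ into \eqref{eq:5.5}, then upgrades this inclusion to equality, so $A=A_T$ generates $T$.

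The main obstacle I anticipate is precisely the mild/classical mismatch in $(ii)\Rightarrow(i)$: Theorem~\ref{th:5.10} is formulated with \emph{unique mild solutions}, whereas hypothesis (ii) supplies only unique \emph{classical} solutions. The resolution is that every auxiliary function entering the argument---both the $w$ used to prove the functional equation \eqref{eq:5.7} and the explicit exponential solution used for single-valuedness---is built from integrals of the continuous map $S(\cdot)x$ and is therefore $C^1$, hence classical; consequently uniqueness of classical solutions is exactly strong enough to run the identification $w=S(\cdot)S(s)x$ and to complete the proof. I would make this regularity bookkeeping explicit rather than citing Theorem~\ref{th:5.10} verbatim.
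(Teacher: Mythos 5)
Your proposal is correct and follows essentially the same route as the paper: both directions pass through the integrated semigroup, with $(i)\Rightarrow(ii)$ identifying $S(t)x=\int_0^t T(s)x\,ds$ via Laplace transforms so that the unique mild solution from Theorem~\ref{th:5.9} lies in $C^1((0,\infty);X)$ and is therefore classical, and $(ii)\Rightarrow(i)$ recovering $T=\dot S$ and obtaining the semigroup law by differentiating the functional equation. Your one genuine addition is the explicit treatment of the mild/classical mismatch in $(ii)\Rightarrow(i)$: the paper simply invokes Theorem~\ref{th:5.10}, whose hypothesis is uniqueness of \emph{mild} solutions, while (ii) supplies only uniqueness of \emph{classical} solutions, and your observation that every auxiliary solution occurring in that proof (the function $w$ behind \eqref{eq:5.7} and the exponential $\frac{1}{\lambda}(e^{\lambda t}-1)x$ used for single-valuedness of $(\lambda-A)^{-1}$) is $C^1$, hence classical, is exactly the right way to close this small gap that the paper leaves implicit.
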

\begin{proof}
$(ii)\Rightarrow (i)$ We know from Theorem~\ref{th:5.10} that $A$ is $m$-dissipative and from Theorem~\ref{th:5.5} that $A$ generates an integrated semigroup  $S$ satisfying \eqref{eq:5.6}.  For $x\in X$, $u(t):=S(t)x$ is the unique mild solution of \eqref{eq:5.4}. Since $u$ is a classical solution, it follows that $S(\cdot)x\in C^1((0,\infty);X)$ for all $x\in X$. Let 
$T(t)x=\frac{d}{dt}  S(t)x$. Then $T:(0,\infty) \to {\mathcal L}(X)$ is strongly continuous.  Taking the derivative in Theorem~\ref{th:5.5} (d), one sees that $T$ is a semigroup. Integration by parts shows that \[R(\lambda,A)x=\int_0^\infty \lambda e^{-\lambda t} S(t) x dt = \int_0^\infty e^{-\lambda t} T(t) x dt\mbox{ for all }\lambda>0.\]
Thus $A$ is the generator of $T$.\\
$(i)\Rightarrow (ii)$ Let $S(t)x=\int_0^t T(s) x ds$. Then 
\[R(\lambda,A)=\int_0^\infty \lambda e^{-\lambda t} S(t)dt \mbox{ for all }\lambda>0.  \]
For $x\in X$, $u(t)=S(t)x$ defines the unique mild solution of \eqref{eq:5.4}. Since $u\in C^1((0,\infty);X)$, it is a classical solution. 
\end{proof}

Next we investigate convergence of $m$-dissipative relations. Recall that for a sequence $A_n\subset X\times X$, $n\in \N$, we define the relation
\[  \lim_{n\to\infty} A_n =\{   (x,y)\in X\times X:\exists (x_n,y_n)\in A_n,x_n\to x,y_n\to y\mbox{ as }n\to\infty\}. \]  
Recall that $\lim_{n\to\infty}A_n$ is closed by Lemma~\ref{lem:3.7new}. 
This allows a particularly simple formulation of the following result of Trotter--Kato type (see \cite[Theorem 3.6.1 and Proposition 3.6.2]{ABHN11} for related semigroup versions).
\begin{thm}\label{th:5.11}
Let $A_n,A$ be $m$-dissipative relations and $S_n,S$ the integrated semigroups generated by $A_n$ and $A$ respectively. The following assertions are equivalent:
\begin{itemize}
	\item[(i)] For each $x\in X$, $\lim_{n\to\infty} S_n(t)x=S(t)x$ uniformly on $[0,T]$ for all $T>0$;
	\item[(ii)] $\lim_{n\to\infty} R(\lambda, A_n)=R(\lambda,A)$ strongly for all $\lambda>0$;
	\item[(iii)] there exists $\lambda\in \rho(A)\cap \bigcap_{n\in\N}\rho(A_n)$ such that $R(\lambda, A_n)\to R(\lambda, A)$ strongly;
	\item[(iv)] if $\mu\in \K$ such that $\overline{\ran (\mu-A)}=X$, $\mu\in \rho(A_n)$ for all $n\in \N$ and $\sup_{n\in \N} \|R(\mu, A_n)\|<\infty$, then $\mu\in \rho(A)$ and $\lim_{n\to\infty}R(\mu,A_n)=R(\mu,A)$ strongly; 
	\item[(v)] $A=\lim_{n\to\infty} A_n$. 	   
\end{itemize}  
\end{thm}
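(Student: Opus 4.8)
The plan is to organize the four resolvent-type assertions (ii)--(v) into a single cycle driven by Theorem~\ref{th:3.5}, and then to attach (i) to that cycle through the Laplace representation in Theorem~\ref{th:5.5}(c). Throughout I would exploit that $m$-dissipativity gives $(0,\infty)\subset\rho(A)\cap\bigcap_n\rho(A_n)$ together with the uniform bound $\|\lambda R(\lambda,A_n)\|\le 1$ for $\lambda>0$; this makes the hypotheses of Theorem~\ref{th:3.5} (a resolvent point common to all the $A_n$ with uniformly bounded resolvents) available for free at every $\lambda>0$, and it is the reason the range/density conditions there become effortless.

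For the cycle I would prove (ii)$\Rightarrow$(iii)$\Rightarrow$(v)$\Rightarrow$(iv)$\Rightarrow$(ii). The step (ii)$\Rightarrow$(iii) is immediate, since any $\lambda>0$ lies in $\rho(A)\cap\bigcap_n\rho(A_n)$. For (iii)$\Rightarrow$(v), strong convergence of $R(\lambda,A_n)$ forces $\sup_n\|R(\lambda,A_n)\|<\infty$ by Banach--Steinhaus; writing $B:=\lim_n A_n$ and applying Theorem~\ref{th:3.5} at this $\lambda$ yields $\lambda\in\rho(B)$ and $R(\lambda,B)=\lim_n R(\lambda,A_n)=R(\lambda,A)$, whence $A=B$ by the uniqueness in Lemma~\ref{lem:3.3}(b); this is exactly (v). For (v)$\Rightarrow$(iv), given a $\mu$ meeting the hypotheses of (iv), the density of $\ran(\mu-A)$ is precisely condition (i) of Theorem~\ref{th:3.5} applied to $B=\lim_n A_n=A$, so the theorem returns $\mu\in\rho(A)$ and $R(\mu,A_n)\to R(\mu,A)$ strongly. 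Finally (iv)$\Rightarrow$(ii) follows by observing that every $\mu=\lambda>0$ satisfies the hypotheses of (iv): $\ran(\lambda-A)=X$ is dense, $\lambda\in\rho(A_n)$, and $\|R(\lambda,A_n)\|\le 1/\lambda$; so (iv) gives strong convergence at each $\lambda>0$, which is (ii).

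To attach (i), the direction (i)$\Rightarrow$(ii) is routine: I would pass to the limit in $R(\lambda,A_n)x=\lambda\int_0^\infty e^{-\lambda t}S_n(t)x\,dt$ from Theorem~\ref{th:5.5}(c). The equi-Lipschitz bound $\|S_n(t)-S_n(s)\|\le|t-s|$ with $S_n(0)=0$ gives $\|S_n(t)x\|\le t\|x\|$, so the integrand is dominated by $\lambda t e^{-\lambda t}\|x\|\in L^1(0,\infty)$; pointwise convergence $S_n(t)x\to S(t)x$ (from (i)) and dominated convergence then yield $R(\lambda,A_n)x\to R(\lambda,A)x$. For the reverse implication (ii)$\Rightarrow$(i) I would first reduce to pointwise convergence: the functions $t\mapsto S_n(t)x$ are equi-Lipschitz, hence equicontinuous, so once $S_n(t)x\to S(t)x$ for each fixed $t$ the convergence is automatically uniform on compact $t$-intervals, and the limit is continuous.

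The hard part, and what I expect to be the main obstacle, is establishing this pointwise \emph{norm} convergence from resolvent convergence alone. Testing against $x'\in X'$ and combining scalar Laplace-transform uniqueness with Arzel\`a--Ascoli only delivers weak convergence $S_n(t)x\rightharpoonup S(t)x$; upgrading from weak to norm convergence cannot be done by Laplace inversion and must use the time-domain structure, namely the functional equation of Theorem~\ref{th:5.5}(d) and the Duhamel-type identity $\bigl(\int_0^t S_n(s)x\,ds,\;S_n(t)x-tx\bigr)\in A_n$ from Proposition~\ref{prop:5.8}. This is precisely the content of the Trotter--Kato approximation theorem for integrated semigroups; since $S_n$ and $S$ are genuine equi-Lipschitz $\mathcal L(X)$-valued functions obeying the same bounds as in the operator case, the multivaluedness of $A_n,A$ does not enter at this level and the argument of \cite[Theorem~3.6.1 and Proposition~3.6.2]{ABHN11} carries over verbatim to yield $S_n(t)x\to S(t)x$ strongly, uniformly on compacts. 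I anticipate that this genuine time-domain estimate, which is not reducible to the resolvent algebra used for (ii)--(v), is where all the difficulty of the theorem is concentrated.
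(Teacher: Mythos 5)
Your proposal is correct and follows essentially the same route as the paper: the same cycle (ii)$\Rightarrow$(iii)$\Rightarrow$(v)$\Rightarrow$(iv)$\Rightarrow$(ii) driven by Theorem~\ref{th:3.5}, the same dominated-convergence argument for (i)$\Rightarrow$(ii), and the same delegation of the genuinely hard step (ii)$\Rightarrow$(i) to a Trotter--Kato theorem for Lipschitz integrated semigroups. The only difference is the reference: the paper invokes \cite[Theorem 1.1]{taiwan}, which is stated directly for the degenerate/pseudo-resolvent setting, whereas you propose adapting \cite[Theorem 3.6.1]{ABHN11}; your diagnosis that the multivaluedness is invisible at the level of the $\mathcal{L}(X)$-valued functions $S_n$ is exactly why either citation works.
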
  
\begin{proof}
$(i)\Rightarrow (ii)$ Since \[R(\lambda, A_n) x=\int_0^\infty \lambda e^{-\lambda t}S_n (t)x dt \mbox{ and } R(\lambda, A) x=\int_0^\infty \lambda e^{-\lambda t} S(t)x dt,\] the dominated convergence theorem shows that 
\[R(\lambda , A_n) x\to R(\lambda, A)x.\]
$(ii)\Rightarrow (iii)$ is trivial.\\
$(iii)\Rightarrow (v)$ Let $B=\lim_{n\to\infty} A_n$. We deduce from Theorem~\ref{th:3.5} that $\lambda\in \rho(B)$ and $R(\lambda, B)=\lim_{n\to\infty} R(\lambda, A_n)$ strongly for $\lambda>0$. Thus $R(\lambda, B)=R(\lambda, A)$. Hence $A=B$. \\
$(v)\Rightarrow (iv)$ This is Theorem~\ref{th:3.5}. \\
$(iv)\Rightarrow (ii)$ If $\lambda>0$, then  $\lambda\in \rho(A)$ as well as $\lambda\in\rho(A_n)$ and $\|R(\lambda, A_n)\|\leq \frac{1}{\lambda}$ for all $n\in\N$. \\
$(ii)\Rightarrow (i)$ This follows from \cite[Theorem 1.1]{taiwan}. 
 \end{proof}
It should be emphasized that even if in Theorem~\ref{th:5.11} each $A_n$ generates a $C_0$-semigroup $T_n$, $(T_n(t))_{ n\in N}$ does not converge strongly, only the integrated semigroup $(S_n(t))_{n\in\N}$  does.  Here is a very simple example.
\begin{exam}\label{ex:5.12}
Let $X=\C$, $A_n=\{ (x,inx): x\in \R\}$. Then $A_n$ generates the $C_0$-semigroup $T_n$ on $\C$ given by 
$T_n(t)x=e^{int}x$. Moreover, one has $\lim_{n\to\infty} A_n =\{0\}\times \C=:A$, which is an 
 $m$-dissipative relation.  In fact, $R(\lambda, A)=0$ for all $\lambda >0$, and $S(t)=0$ for all  $t>0$.  Let $S_n (t)=\int_0^t T_n(s) ds =\frac{1}{in} (e^{int} -1)$.   Thus $\lim_{n\to\infty} S_n (t) =0$ as asserted by Theorem~\ref{th:5.11}.  Moreover, $(T_n(t))_n$ does not converge  unless $t\in 2\pi\Z$. This example was mentioned in \cite[Example 1.4]{taiwan} in terms of pseudo-resolvents. Here it is instructive to identify $\lim_{n\to\infty} A_n$. 
\end{exam}    
In Section~\ref{sec:7} we will see that the situation is much better in the holomorphic case. 

The result of this section can be easily generalized. If $T:(0,\infty)\to {\mathcal L}(X)$ is a strongly continuous  semigroup such that $\sup_{0<t\leq 1} \|T(t)\|<\infty$, then there exist $M\geq 0,\omega \in \R$ such that $\|T(t)\leq Me^{\omega t}$. Thus we may define the generator $A$ as in Definition~\ref{def:5.1} considering merely $\lambda >\omega$. Then $A-\omega$ generates  the bounded semigroup $T$. Moreover 
\[ |x|:=\sup_{t>0}  \|e^{-\omega t} T(t)x\|\]
defines an equivalent  norm making $A-\omega $ $m$-dissipative.  Conversely, given a closed relation $A$ which satisfies the Hille--Yosida condition, by the proof of \cite[Lemma 3.5.4]{ABHN11}, we obtain an equivalent norm making $A-\omega $ $m$-dissipative.   
\section{Domain convergence for the heat equation with Dirichlet boundary conditions}\label{sec:6}
Throughtout this section we choose $\K=\R$. Let $\Omega\subset \R^d$ be open and bounded. We will consider the Laplacian with Dirichlet boundary conditions on $\Omega$. Since we are interested in convergence results when $\Omega$ varies, we consider a large open ball $B$ such that $\overline{\Omega}\subset B$ and let $X=C(\overline{B}):=\{f:\overline{B}\to\R: f\mbox{ continuous}\}$ with the supremum norm 
\[  \|f\|_\infty:=\sup_{x\in \overline{B}}|f(x)|.\]  
We let \[C_0(\Omega):=\{ u\in C(\overline{B}): u(x)=0\mbox{ for all }x\in \overline{B}\setminus \Omega \}\]
Then $C_0(\Omega)$ is a closed subspace of $C(\overline{B})$. 

Now we define the \emph{Dirichlet-Laplacian} $A_\Omega$ with respect to $\Omega$  as a relation in $C(\overline{B})\times  C(\overline{B})$ by 
\[  A_\Omega := \{   (u,f)   : u\in C_0(\Omega), f\in C(\overline{B}), \Delta u=f\mbox{ in }{\mathcal D}(\Omega)'\}.\]
Here ${\mathcal D}(\Omega):=C^\infty_c(\Omega)$ is the space of all test functions, and to say that $\Delta u=f$ in  ${\mathcal D}(\Omega)'$ means that 
\[  \int_B u\Delta \varphi =\int_B f\varphi\mbox{ for all }\varphi\in {\mathcal D}(\Omega).\]
It is obvious that $A_\Omega\subset C(\overline{B})\times   C(\overline{B})$ is a closed relation. We assume furthermore that $\Omega$ is \emph{Dirichlet regular}; i.e. for all $g\in C(\partial \Omega)$ there exists $u\in C^2(\Omega)\cap C(\overline{\Omega})$ such that $\Delta u=0$, $u_{|\partial \Omega} =g$. This condition is very well understood. For example if $\Omega$ has Lipschitz boundary, then it is Dirichlet regular; for $d=2$  it suffices that $\Omega$ is simply connected. 
\begin{thm}\label{th:6.1}
	Assume that $\Omega$ is Dirichlet regular. Then $A_\Omega$ is $m$-dissipative and $0\in\rho(A_{\Omega})$.   
\end{thm}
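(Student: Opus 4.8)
The plan is to deduce everything from Theorem~\ref{th:4.4}. Since $A_\Omega$ is already a closed relation, it suffices to show that $A_\Omega$ is \emph{dissipative} and that $\ran A_\Omega=C(\overline B)$; Theorem~\ref{th:4.4} then yields at once that $A_\Omega$ is $m$-dissipative and that $0\in\rho(A_\Omega)$. This is exactly the situation announced in the introduction, where the range condition is verified at $\lambda=0$ rather than for some $\lambda>0$.

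\emph{Dissipativity.} I would use the duality-map criterion of Proposition~\ref{prop:4.1}. Let $(u,f)\in A_\Omega$; if $u=0$ there is nothing to prove. Otherwise, replacing $(u,f)$ by $(-u,-f)$ if necessary, I may assume $\max_{\overline B}u=\|u\|_\infty>0$. Because $u$ vanishes on $\overline B\setminus\Omega$, which contains $\partial\Omega$, this maximum is attained at an interior point $x_0\in\Omega$, and then the Dirac functional $\delta_{x_0}$ belongs to $dN(u)$. It remains to check $\langle\delta_{x_0},f\rangle=f(x_0)\le 0$. If instead $f(x_0)>0$, then by continuity $f>0$ on some open ball $V$ around $x_0$ with $\overline V\subset\Omega$, so $\Delta u=f>0$ on $V$ in the sense of distributions. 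A continuous distributional subsolution is subharmonic in the classical (sub-mean-value) sense, so the strong maximum principle forces $u$ to be constant near $x_0$, whence $\Delta u=0$ there, a contradiction. Thus $f(x_0)\le 0$, and $A_\Omega$ is dissipative.

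\emph{Surjectivity.} Fix $f\in C(\overline B)$; I must produce $u\in C_0(\Omega)$ with $\Delta u=f$ in $\mathcal D(\Omega)'$. First let $w$ be the Newtonian potential of $f$ restricted to $\Omega$ and extended by zero, so that $w$ is continuous on $\overline B$ (in fact of class $C^1$) and $\Delta w=f$ in $\mathcal D(\Omega)'$. Its boundary trace $g:=w_{|\partial\Omega}\in C(\partial\Omega)$ need not vanish, so I correct it harmonically: Dirichlet regularity provides $h\in C^2(\Omega)\cap C(\overline\Omega)$ with $\Delta h=0$ and $h_{|\partial\Omega}=g$. Then $u_0:=w-h$ satisfies $\Delta u_0=f$ in $\mathcal D(\Omega)'$ and $u_0=0$ on $\partial\Omega$. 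Since $u_0\in C(\overline\Omega)$ vanishes on $\partial\Omega$, its extension by zero to $\overline B$ is continuous, lies in $C_0(\Omega)$, and still satisfies $\Delta u=f$ in $\mathcal D(\Omega)'$ (the equation tests only against functions supported in $\Omega$). Hence $(u,f)\in A_\Omega$ and $f\in\ran A_\Omega$; as $f$ was arbitrary, $\ran A_\Omega=C(\overline B)$.

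The main obstacle is the surjectivity step, where Dirichlet regularity is indispensable: it is precisely what guarantees a harmonic corrector $h$ that is continuous up to the boundary, so that $u_0$ attains zero boundary values continuously and its zero-extension genuinely lands in $C(\overline B)$. In the dissipativity step the only delicate point is passing from the distributional identity $\Delta u=f$ to a pointwise sign of $f$ at the maximum, which is handled by the equivalence of distributional and classical subharmonicity for continuous functions together with the strong maximum principle. Once both properties are in hand, Theorem~\ref{th:4.4} closes the argument.
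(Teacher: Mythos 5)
Your proof is correct and follows essentially the same route as the paper: dissipativity via $\pm\delta_{x_0}\in dN(u)$ at an interior maximum together with a maximum principle for the distributional Laplacian, surjectivity via the Newtonian potential corrected by a harmonic function supplied by Dirichlet regularity, and then Theorem~\ref{th:4.4}. The only (harmless) deviations are that you obtain $f(x_0)\le 0$ from subharmonicity and the strong maximum principle instead of the paper's mollification argument (Proposition~\ref{prop:6.2}), and you convolve $E_d$ with $f\mathbf{1}_\Omega$ rather than with a continuous compactly supported extension of $f$.
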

To prove dissipativity we need the following maximum principle for the distributional Laplacian. For proving $m$-dissipativity we will use the Lumer--Phillips Theorem in the version of Theorem~\ref{th:4.4}.  
\begin{prop}\label{prop:6.2}
Let $\mathcal U$ be an open neighborhood of $x_0\in \R^d$. Let $u\in C({\mathcal U})$ such that $\Delta u\in C({\mathcal U})$. If $u(x_0)=\max_{x\in {\mathcal U}} u(x)$, then $\Delta u(x_0)\leq 0$.     
\end{prop}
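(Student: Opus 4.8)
The plan is to argue by contradiction and to reduce the statement, via mollification, to the classical sub-mean-value property of subharmonic functions. Suppose $\Delta u(x_0)>0$ and write $f:=\Delta u\in C(\mathcal U)$. Since $f$ is continuous and $f(x_0)>0$, I would first fix $R>0$ with $\overline{B(x_0,R)}\subset\mathcal U$ and a constant $c>0$ such that $f\geq c$ on $\overline{B(x_0,R)}$.

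The heart of the argument is the spherical-average identity, which I would first establish for smooth functions. If $w\in C^2$ near $\overline{B(x_0,r)}$ and $\phi(r):=\frac{1}{|\partial B(x_0,r)|}\int_{\partial B(x_0,r)}w\,d\sigma$ denotes the spherical mean, then the divergence theorem applied to $\nabla w$ gives
\[ \phi'(r)=\frac{1}{\omega_{d-1}r^{d-1}}\int_{B(x_0,r)}\Delta w\,dy, \]
where $\omega_{d-1}=|\partial B(0,1)|$; integrating from $0$ to $r$ and using $\phi(0^+)=w(x_0)$ yields
\[ \frac{1}{|\partial B(x_0,r)|}\int_{\partial B(x_0,r)}w\,d\sigma-w(x_0)=\int_0^r\frac{1}{\omega_{d-1}s^{d-1}}\int_{B(x_0,s)}\Delta w\,dy\,ds. \]

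Next I would transfer this identity to $u$. Let $u_\varepsilon:=u*\rho_\varepsilon$ be a standard mollification, which is smooth on $B(x_0,R)$ for small $\varepsilon$ and satisfies $\Delta u_\varepsilon=f*\rho_\varepsilon$, since mollification commutes with the distributional Laplacian and both sides are continuous functions. As $\varepsilon\to0$ one has $u_\varepsilon\to u$ and $\Delta u_\varepsilon\to f$ uniformly on $\overline{B(x_0,r)}$ for each $r<R$. Applying the smooth identity to $w=u_\varepsilon$ and letting $\varepsilon\to0$ — the radial integral passing to the limit because the inner integral is $O(s^d)$, so the factor $s^{-(d-1)}$ leaves only an integrable $O(s)$ singularity — I obtain the same identity for $u$, with the distributional $\Delta u$ replaced by the continuous function $f$.

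Finally, for $0<r<R$ the right-hand side is bounded below, using $f\geq c$, by $\int_0^r\frac{c\,\beta_d\,s^d}{\omega_{d-1}s^{d-1}}\,ds=\frac{c\,\beta_d}{2\,\omega_{d-1}}r^2>0$, where $\beta_d=|B(0,1)|$. Hence the spherical mean of $u$ over $\partial B(x_0,r)$ strictly exceeds $u(x_0)$, contradicting $u(x_0)=\max_{\mathcal U}u$, which forces $u\leq u(x_0)$ and therefore the mean to be $\leq u(x_0)$. Thus $\Delta u(x_0)\leq0$. The one genuinely delicate point is the low regularity of $u$: since $\Delta u\in C^0$ does not force $u\in C^2$, the classical maximum principle cannot be invoked directly, and it is precisely the mollification step — the identity $\Delta(u*\rho_\varepsilon)=(\Delta u)*\rho_\varepsilon$ together with the uniform passage to the limit in the radial integral — that makes the argument rigorous.
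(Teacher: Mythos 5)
Your proof is correct, but it takes a different route from the paper's. The paper also mollifies, setting $u_n=\rho_n\star u$, but then argues pointwise at approximate maxima: since $u_n\to u$ uniformly on a compact ball $K$, one can pick maximum points $x_n$ of $u_n$ on $K$ with $x_n\to x_0$, apply the classical fact $\Delta u_n(x_n)\le 0$ at each of these $C^\infty$ maxima, and pass to the limit using $\Delta(\rho_n\star u)=\rho_n\star\Delta u\to\Delta u$ uniformly. You instead prove (by contradiction) the quantitative sub-mean-value inequality: the divergence-theorem identity $\phi(r)-w(x_0)=\int_0^r\omega_{d-1}^{-1}s^{1-d}\int_{B(x_0,s)}\Delta w$ for smooth $w$, transferred to $u$ by the same commutation $\Delta(u*\rho_\eps)=(\Delta u)*\rho_\eps$ and a dominated-convergence passage to the limit (justified, as you note, by the $O(s^d)$ bound on the inner integral). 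What each buys: the paper's argument is shorter and needs no integral identity, but it silently relies on being able to choose maximum points $x_n$ of $u_n$ converging to $x_0$, which requires a word of justification when the maximum of $u$ on $K$ is not attained only at $x_0$; your mean-value route sidesteps that issue entirely and in fact yields a stronger, quantitative conclusion (a strict increase of the spherical means of order $r^2$ wherever $\Delta u>0$), at the cost of setting up the radial identity. Both are complete and rigorous; the only points worth making explicit in a final write-up are that $R$ may need to be shrunk so that $f\ge c$ holds on $\overline{B(x_0,R)}$, and that the identity for $u_\eps$ is applied on balls $\overline{B(x_0,r)}$ with $r<R-\eps$ so that $u_\eps$ is smooth there.
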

\begin{proof}
	This is well-known if $u\in C^2(\Omega)$. To be complete we give a proof in the general case we need here. Let $(\rho_n)_{n\in\N}$ be a \emph{mollifying sequence}, i.e. $0\leq \rho_n\in {\mathcal D}(\R^d)$, 
supp$\rho_n\subset B(0,1/n)$, $\int_{\R^d}\rho_n =1$ for all $n\in \N$. Let $r>0$ such that $K=\overline{B(x_0,r)}\subset \Omega$. Define, for $n>1/r$, $u_n=\rho_n\star u$. Then  $u_n\to u$ uniformly on $K$. Consequently there exists a sequence $(x_n)_{n\in \N}\subset K$ such that $x_n\to x_0$ and $u_n(x_n)=\max_K u_n(x)$ for all $n\in\N$. By the classical result $\Delta u_n (x_n)\leq 0$.  Hence 
\[  (\Delta u)(x_0)=\lim_{n\to\infty} (\rho_n\star \Delta u)(x_n)=\lim_{n\to\infty} \Delta (\rho_n\star u)(x_n)\leq 0. \]
\end{proof}
\begin{proof}[Proof of Theorem~\ref{th:6.1}]
We first prove that $A_\Omega$ is dissipative. Let $(u,f)$ be in  $A_\Omega$. Since $u=0$ on $B\setminus  \overline{\Omega}$, there exists $x_0\in \Omega$ such that $|u(x_0)|=\|u\|_\infty$. \\
\emph{First case}: $u(x_0)>0$. Then $u(x_0)=\max_{x\in\R^d}u(x)$.  Then $\delta_{x_0}\in dN(u)$ (see Section~\ref{sec:4}). Then $\langle \delta_{x_0},f\rangle=(\Delta u)(x_0)\leq 0$ by Proposition~\ref{prop:6.2}.\\
\emph{Second case}: $u(x_0)<0$. Then $(-u,-f)\in A$ and $-u(x_0)=\|-u\|_\infty$.  From the first case we deduce that   $\langle \delta_{x_0},-f\rangle \leq 0$. Hence $\langle -\delta_{x_0}, f\rangle\leq 0$ and $-\delta_{x_0}\in dN(u)$. \\
\emph{Third case}: $u(x_0)=0$. Then $u=0$. Choose $x'_0=0$. Then $x'_0\in dN(u)$. 

In all the three cases we found $x_0'\in dN(u)$ such that 
$\langle x_0',f\rangle\leq 0$. We deduce from Proposition~\ref{prop:4.1} that $A_\Omega$ is dissipative. 

In order to prove $m$-dissipativity, by Theorem~\ref{th:4.4}, it suffices to show that $A_\Omega$ is surjective.   

Let $E_d\in L^1_{loc}(\R^d)$ be the Newtonian potential. Then for $f\in C_c(\R^d)$, $w=E_d\star f\in C^1(\R^d)$ and $\Delta w=f$ in the sense of distributions.  Now let $f\in C(B)$, extend $f$ to a function $\widetilde{f}\in C_c(\R^d)$. Then $w=E_d \star \widetilde{f}\in C^1(\R^d)$ and $\Delta w=\widetilde{f}$. Let $h\in C(\overline{\Omega})\cap C^2(\Omega)$ such that $\Delta w=0$ and $h_{|\partial \Omega} =w_{|\partial \Omega}$. Then $u=w-h\in C(\overline{\Omega})$, $u_{|\partial \Omega}=0$ and $\Delta v=f$ in ${\mathcal D}(\Omega)'$.  Extending $u$ by $0$ outside $\overline{\Omega}$ we obtain a  pair $(u,f)\in A_\Omega$. Thus $f\in \ran A_\Omega$. This completes the proof.       
\end{proof}
Next we want to study domain convergence. Let $\Omega$, $\Omega_n$ be open sets such that $\overline{\Omega}$,  $\overline{\Omega_n}\subset B$, where $B$ is a large open ball in $\R^d$. Following \cite[Definition 3.6]{AD08} we say that $(\Omega_n)_{n\in\N}$ \emph{converges} to $\Omega$ as $n\to\infty$ and write $\Omega_n\to\Omega$ if 
\begin{itemize}
	\item[(a)] for all compact $K\subset \Omega$ there exists $n_0\in \N$ such  that $K\subset \Omega_n$ for all $n\geq n_0$ and 
	\item[(b)] $\lambda (\Omega_n\setminus \overline{\Omega}) \to 0$ as $n\to\infty$. 
	\end{itemize}
Here $\lambda(\Omega_n\setminus \overline{\Omega})$ is the first eigenvalue of the Dirichlet Laplacian (see \cite[(2.6)]{AD08}).  
We remark that $(b)$ is satisfied whenever $|\Omega_n\setminus \overline{\Omega}|\to 0$ as $n\to\infty$.  Here $|F|$ stands for the Lebesgue measure of a Borel set $F\subset \R^d$. 

We say that an open bounded set is \emph{stable} if 
\[  H_0^1(\Omega)=H_0^1(\overline{\Omega}):=\{  u_{|\Omega}:u\in H^1(\R^d), u(x)=0\mbox{ for all }x\in \R^d\setminus \overline{\Omega} \}. \] 
If $\Omega$ has continuous boundary in the sense of graphs, then $\Omega$ is stable. Note that stability is independent of Dirichlet regularity. The Lebesgue cusp yields an example of a bounded open set with continous boundary which is not Dirichlet regular. Whereas each bounded open subset $\Omega$ of $\R$ is Dirichlet regular, the set $\Omega=(0,1)\cup (1,2)$ is not stable. 

Recall that for an open, bounded, Dirichlet regular set, the relation $A_\Omega\subset C(\overline{B})\times C(\overline{B})$ is $m$-dissipative. Thus $R(\lambda, A_\Omega)\in {\mathcal L}(C(\overline{B}))$ and 
$\|\lambda R(\lambda, A_\Omega)\|\leq 1$ for $\lambda>0$.  

Denote by $S_\Omega:[0,\infty)\to {\mathcal L}(C(\overline{B}))$ the integrated semigroup generated by $A_\Omega$.  

Now we can formulate the main result of this section. 
\begin{thm}\label{th:6.3}
Let $\Omega_n, n\in\N$, and $\Omega$ be bounded open sets, all Dirichlet regular. Suppose that $\overline{\Omega_n}, \,\overline{\Omega}\subset B$ for all $n\in\N$, and that $\Omega$ is stable.  Finally, suppose that $\Omega_n\to\Omega$ as $n\to\infty$. Then  
\begin{itemize}
	\item[(a)] $\lim_{n\to\infty} R(\lambda, A_{\Omega_n})=R(\lambda,A_\Omega)$ strongly for all $\lambda>0$;
	\item[(b)] $\lim_{n\to\infty} S_{\Omega_n}(t)f=S_\Omega(t)f$ in $C(\overline{B})$ uniformly on $[0,T]$ for all $T>0$, $f\in C(\overline{B})$; 
	\item[(c)] for all $(u,f)\in A_\Omega$ there exist $(u_n,f_n)\in A_\Omega$ for $n\in\N$ such that $u_n\to u$, $f_n\to f$ in $C(\overline{B})$.  
\end{itemize}
\end{thm}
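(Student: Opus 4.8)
The plan is to derive all three statements simultaneously from the Trotter--Kato theorem, Theorem~\ref{th:5.11}, by verifying one single resolvent convergence. By Theorem~\ref{th:6.1} every relation $A_{\Omega_n}$ and $A_\Omega$ is $m$-dissipative with $0$ in its resolvent set, so $0\in\rho(A_\Omega)\cap\bigcap_{n\in\N}\rho(A_{\Omega_n})$. Consequently statements $(a)$, $(b)$, $(c)$ are precisely conditions $(ii)$, $(i)$, and (the ``$\subseteq$'' part of) $(v)$ in Theorem~\ref{th:5.11}, and by the equivalences proved there it suffices to establish condition $(iii)$ at the single point $\lambda=0$, namely
\[
R(0,A_{\Omega_n})\to R(0,A_\Omega)\quad\text{strongly in }\mathcal{L}(C(\overline B)).
\]

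The reason for working at $\lambda=0$ is that there the resolvent is exactly the solution operator of the elliptic Dirichlet problem (this is the very phenomenon exploited already in Theorem~\ref{th:6.1}). Unwinding the definition $0-A_\Omega=\{(u,-f):(u,f)\in A_\Omega\}$, one sees that $R(0,A_\Omega)$ assigns to $g\in C(\overline B)$ the unique $u\in C_0(\Omega)$ with $-\Delta u=g$ in $\mathcal{D}(\Omega)'$, and likewise $R(0,A_{\Omega_n})g=u_n\in C_0(\Omega_n)$ solves $-\Delta u_n=g$ in $\mathcal{D}(\Omega_n)'$. Thus the strong convergence of the resolvents at $0$ is nothing but the assertion that, for each fixed $g\in C(\overline B)$, the solutions $u_n$ of the Dirichlet problem on the perturbed domains converge \emph{uniformly} on $\overline B$ to the solution $u$ on $\Omega$.

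First I would note that strong convergence of $R(0,A_{\Omega_n})$ automatically yields $\sup_n\|R(0,A_{\Omega_n})\|<\infty$ by Banach--Steinhaus, so that Theorem~\ref{th:3.5} applies within the proof of Theorem~\ref{th:5.11}. The substance of the argument is then the uniform elliptic convergence, and this is exactly what the nearly optimal results of Arendt--Daners \cite{AD07,AD08} supply: under $\Omega_n\to\Omega$ together with the \emph{stability} hypothesis $H^1_0(\Omega)=H^1_0(\overline\Omega)$ on the limit domain, one has $u_n\to u$ in the sup-norm for every continuous right-hand side. I would therefore invoke these results directly to obtain condition $(iii)$ with $\lambda=0$, and then simply read off $(a)=(ii)$, $(b)=(i)$, and $(c)$ from $(v)$, the equality $A_\Omega=\lim_{n\to\infty}A_{\Omega_n}$, whose ``$\subseteq$'' inclusion is precisely statement $(c)$.

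The main obstacle is the uniform (sup-norm) elliptic convergence itself, which is where the stability assumption is indispensable: strong $H^1_0$-convergence alone does not give convergence in $C(\overline B)$, and one needs barrier arguments near $\partial\Omega$ together with control of the boundary behaviour on the moving domains $\Omega_n$. The condition $H^1_0(\Omega)=H^1_0(\overline\Omega)$ is exactly what excludes the spurious loss of the domain in the limit, the classical obstruction illustrated by the unstable set $(0,1)\cup(1,2)$. Once this elliptic input is granted from \cite{AD07,AD08}, the passage to the parabolic and integrated-semigroup statements requires no further estimate and is handled entirely by Theorem~\ref{th:5.11}.
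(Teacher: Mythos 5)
Your proposal is correct and follows essentially the same route as the paper: the paper's proof consists exactly of citing the Arendt--Daners uniform convergence result for the elliptic Dirichlet problem to get $R(0,A_{\Omega_n})\to R(0,A_\Omega)$ strongly, and then reading off $(a)$, $(b)$, $(c)$ from the equivalences of Theorem~\ref{th:5.11}. Your additional remarks (identifying $(a),(b),(c)$ with conditions $(ii),(i)$ and the inclusion $A_\Omega\subset\lim_n A_{\Omega_n}$ of $(v)$, and noting that Banach--Steinhaus supplies the uniform resolvent bound needed to apply Theorem~\ref{th:3.5} at $\lambda=0$) merely make explicit what the paper leaves implicit.
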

\begin{proof}
	By \cite[Theorem 5.6]{AD08}, $R(0,\Delta_{\Omega_n})\to R(0,\Delta_\Omega)$ strongly. Now $(a),(b)$ and $(c)$ follow from Theorem~\ref{th:5.11}. 
\end{proof}
In terms of the solution of the inhomogeneous heat equation, Theorem~\ref{th:6.3} gives the following stability result. 
\begin{cor}\label{cor:6.4}
Under the assumptions of Theorem~\ref{th:6.3}, the following holds. Let $f\in C(\overline{B})$ be given and denote by $u,u_n\in C([0,\infty), C(\overline{B}))$ the mild solutions of 
\[  \dot{u}(t)\in A_\Omega u(t) +f, \,\, t\geq 0\mbox{ and }u(0)=0\] 
\[ \dot{u_n}(t)\in A_{\Omega_n} u_n(t) +f, \,\, t\geq 0 \mbox{ and }u_n(0)=0. \]
Then $u_n(t)\to u(t)$ 
as $n\to\infty$, uniformly on $[0,T]$ and for all $T>0$.  
\end{cor}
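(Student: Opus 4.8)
The plan is to obtain Corollary~\ref{cor:6.4} as a direct consequence of Theorem~\ref{th:6.3}, which already does all the analytic work. The corollary merely reinterprets part~(b) of the theorem in terms of mild solutions, so the core of the argument is an identification: the mild solution of the inhomogeneous problem $\dot{u}(t)\in A_\Omega u(t)+f$, $u(0)=0$, is exactly $S_\Omega(t)f$, where $S_\Omega$ is the integrated semigroup generated by $A_\Omega$. I would first invoke Theorem~\ref{th:6.1} to note that each $A_{\Omega_n}$ and $A_\Omega$ is $m$-dissipative, so that Theorem~\ref{th:5.9} applies: for the given $f\in C(\overline{B})$, Problem~\eqref{eq:5.4} (with $x$ replaced by $f$) has a unique mild solution, and that solution is $u(t)=S_\Omega(t)f$, respectively $u_n(t)=S_{\Omega_n}(t)f$.

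With this identification in hand, the convergence is immediate. By Theorem~\ref{th:6.3}(b) we have $\lim_{n\to\infty} S_{\Omega_n}(t)f=S_\Omega(t)f$ in $C(\overline{B})$, uniformly on $[0,T]$ for every $T>0$. Rewriting this via the identification gives precisely $u_n(t)\to u(t)$ uniformly on $[0,T]$, which is the assertion of the corollary. So the proof reduces to citing Theorem~\ref{th:5.9} to pin down $u=S_\Omega(\cdot)f$ and $u_n=S_{\Omega_n}(\cdot)f$, and then citing Theorem~\ref{th:6.3}(b).

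There is essentially no obstacle here, since all the hard analysis — the resolvent convergence from \cite{AD08}, the passage to integrated semigroups via the Trotter--Kato result Theorem~\ref{th:5.11}, and the well-posedness of the inhomogeneous problem via Theorem~\ref{th:5.9} — has already been established. The only point requiring a word of care is to confirm that the hypotheses of Theorem~\ref{th:6.3} are exactly the standing assumptions of Corollary~\ref{cor:6.4}, namely that $\Omega_n,\Omega$ are bounded, open, Dirichlet regular, contained in $B$, with $\Omega$ stable and $\Omega_n\to\Omega$; these are inherited verbatim. I would therefore keep the proof to two or three sentences, stating that the mild solutions coincide with the integrated semigroups applied to $f$ by Theorem~\ref{th:5.9}, and that the claimed uniform convergence on $[0,T]$ is then exactly Theorem~\ref{th:6.3}(b).
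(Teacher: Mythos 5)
Your proposal is correct and follows exactly the paper's own argument: identify $u=S_\Omega(\cdot)f$ and $u_n=S_{\Omega_n}(\cdot)f$ via Theorem~\ref{th:5.9}, then apply Theorem~\ref{th:6.3}(b). Nothing is missing.
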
 
\begin{proof} 
	One has $u_n(t)= S_{\Omega_n}(t)f $ and $u(t)=S_{\Omega}(t)f $ by Theorem~\ref{th:5.9}. So Corollary~\ref{cor:6.4} follows directly from Theorem~\ref{th:6.3}.  
\end{proof}
The corollary shows in particular that $u_n(t)_{|\overline{B}\setminus \Omega}\to 0$ as $n\to\infty$ uniformly for $t\in [0,T]$ and for all $T>0$. 

It is interesting that the Lumer--Phillips Theorem with surjectivity as range condition (Theorem~\ref{th:4.4}) allows the following perturbation of $A_\Omega$.  
\begin{prop}\label{prop:6.5}
Let $\Omega\in\R^d$ be open, bounded and Dirichlet regular. Let $B$ be a ball such that 
$\overline{\Omega} \subset B$. If $m\in C(\overline{B})$ such that $m(x)\neq 0$ for all $x\in \overline{B}$, then 
\[ mA_\Omega:=\{  (u,mf):(u,f)\in A_\Omega\}\]
is $m$-dissipative. 
\end{prop}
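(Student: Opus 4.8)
The plan is to deduce the statement from the surjective form of the Lumer--Phillips Theorem, Theorem~\ref{th:4.4}. Since that theorem makes no closedness assumption on the relation and already delivers both $m$-dissipativity and $0\in\rho$, it suffices to check two things for $mA_\Omega$: that it is dissipative, and that $\ran(mA_\Omega)=C(\overline{B})$. The surjectivity will be essentially formal, while the dissipativity is the real content and will be obtained by transporting the maximum-principle argument of Theorem~\ref{th:6.1} through the duality-map criterion of Proposition~\ref{prop:4.1}.

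Surjectivity is the easy step. By Theorem~\ref{th:6.1} we have $0\in\rho(A_\Omega)$, so $\ran A_\Omega=C(\overline{B})$; that is, for every $f\in C(\overline{B})$ there is some $u$ with $(u,f)\in A_\Omega$. Hence $\ran(mA_\Omega)=\{\,mf:f\in C(\overline{B})\,\}=m\cdot C(\overline{B})$. Because $m$ is continuous and nowhere zero on the compact set $\overline{B}$, the reciprocal $1/m$ again belongs to $C(\overline{B})$, so multiplication by $m$ is a bijection of $C(\overline{B})$ onto itself; in particular $m\cdot C(\overline{B})=C(\overline{B})$ and $mA_\Omega$ is surjective.

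For dissipativity I would reuse the three-case maximum-principle argument from the proof of Theorem~\ref{th:6.1}. First observe that $\overline{B}$ is connected and $m$ is continuous and nowhere vanishing, so $m$ has constant sign on $\overline{B}$; the computation goes through in the positive case, so I carry it out for $m>0$. Given $(u,mf)\in mA_\Omega$ with $(u,f)\in A_\Omega$, choose $x_0\in\Omega$ with $|u(x_0)|=\|u\|_\infty$ exactly as before. If $u(x_0)>0$, then $x_0$ is a global maximum, $\delta_{x_0}\in dN(u)$, and Proposition~\ref{prop:6.2} gives $f(x_0)=\Delta u(x_0)\le 0$; consequently $\langle\delta_{x_0},mf\rangle=m(x_0)f(x_0)\le 0$ since $m(x_0)>0$. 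If $u(x_0)<0$, apply the first case to $(-u,-f)$ to get $f(x_0)\ge 0$ and use $-\delta_{x_0}\in dN(u)$, so that $\langle-\delta_{x_0},mf\rangle=-m(x_0)f(x_0)\le 0$; the case $u\equiv 0$ is trivial with the zero functional. In every case there is $u'\in dN(u)$ with $\re\langle u',mf\rangle\le 0$, so $mA_\Omega$ is dissipative by Proposition~\ref{prop:4.1}, and Theorem~\ref{th:4.4} concludes.

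The delicate point I would flag as the main obstacle is the sign bookkeeping in the dissipativity step. The representing functional forced to lie in $dN(u)$ is dictated by the sign of $u(x_0)$ (it must be $+\delta_{x_0}$ at a positive peak and $-\delta_{x_0}$ at a negative one), and then the sign of $\langle\pm\delta_{x_0},mf\rangle$ is $\mathrm{sign}(m(x_0))$ times the sign already pinned down by Proposition~\ref{prop:6.2}. This product is $\le 0$ precisely when $m>0$, and since any norming measure in $dN(u)$ is a probability measure supported on the peak set of $u$, no alternative choice of functional can rescue a negative sign of $m$. This is why the constancy of sign (via connectedness of $\overline{B}$) is the crucial structural remark, and why positivity of $m$ is the natural reading of the hypothesis.
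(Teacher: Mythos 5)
Your proof is correct and follows essentially the same route as the paper: dissipativity by transporting the maximum-principle/duality-map argument of Theorem~\ref{th:6.1}, surjectivity from $0\in\rho(A_\Omega)$ together with $1/m\in C(\overline{B})$, and then Theorem~\ref{th:4.4}. Your remark on the sign of $m$ is moreover a genuine catch rather than a pedantic one: the paper's proof only says dissipativity holds ``as for $A_\Omega$'', but this works precisely when $m>0$, and for $m<0$ the conclusion actually fails (e.g.\ for $m\equiv -1$ the pair $(\varphi_1,\lambda_1\varphi_1)\in -A_\Omega$ coming from a first Dirichlet eigenfunction violates $\|\mu u\|\le\|\mu u-y\|$ at $\mu=\lambda_1$), so the hypothesis $m(x)\neq 0$ should indeed be read as $m>0$, which your connectedness observation makes the only viable case.
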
  
\begin{proof}
	As for $A_\Omega$, one sees that $mA_\Omega$ is dissipative. Since $\ran A_\Omega=C(\overline{B})$ and $\frac{1}{m}\in C(\overline{B})$, we have also $\ran (mA_\Omega)=C(\overline{\Omega})$. Now the claim follows from Theorem~\ref{th:4.4}. 
\end{proof}

\section{Relations generating a holomorphic semigroup}\label{sec:7}
In contrast to $m$-disspativity, the usual holomorphic estimate of a closed relation yields a holomorphic semigroup without any restriction on the Banach space. Let $X$ be a complex Banach space. For $0<\alpha \leq \pi$ we consider the open sector
\[  \Sigma_\theta:=\{  re^{i\theta}:r>0,\theta\in (-\alpha,\alpha)\}.\]
A mapping $T:\Sigma_\alpha \to{\mathcal L}(X)$ is called a \emph{semigroup} if $T(z_1+z_2)=T(z_1)T(z_2)$ for all $z_1,z_2\in \Sigma_\alpha$. We speak of a \emph{holomorphic semigroup} if in addition the mapping $T$ is holomorphic. Then the restriction of $T$ to $(0,\infty)$ is a strongly continuous semigroup in the sense of Definition~\ref{def:5.1}. 
\begin{thm}\label{th:7.1}
Let $\alpha\in (0,\frac{\pi}{2}]$, $M>0$. Let $T:\Sigma_\alpha \to {\mathcal L}(X)$ be a holomorphic semigroup such that 
\[  \|T(z)\|\leq M \mbox{ for all }z\in \Sigma_\alpha. \]
Then the generator $A$ of $T$ (which is a relation) satisfies the following holomorphic estimate:
\begin{equation}\label{eq:7.1} 
\Sigma_{\alpha +\frac{\pi}{2}}\subset \rho (A)\mbox{ and }\|  \lambda R(\lambda ,A)\|\leq M\frac{1}{\sin \varepsilon}\mbox{ for all }\lambda\in \Sigma_{\alpha +\frac{\pi}{2} -\varepsilon}\mbox{ and }0<\varepsilon<\frac{\pi}{2}.
\end{equation}	
\end{thm}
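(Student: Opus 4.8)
The plan is to represent the resolvent of $A$ as a Laplace-type integral of $T$ taken along a ray inside the sector of holomorphy, and then to use the analytic-continuation principle of Lemma~\ref{lem:3.6new} to transfer the identity $R(\lambda)=R(\lambda,A)$ from the positive half-line to the full sector $\Sigma_{\alpha+\frac{\pi}{2}}$; the norm estimate will then drop out of the integral representation by optimizing the ray.

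First I would fix, for $\lambda=|\lambda|e^{i\varphi}\in\C\setminus\{0\}$, an angle $\theta\in(-\alpha,\alpha)$ with $\re(\lambda e^{i\theta})>0$ and set
\[ R(\lambda):=\int_0^\infty e^{-\lambda s e^{i\theta}}\,T(se^{i\theta})\,e^{i\theta}\,ds. \]
The bound $\|T(z)\|\le M$ gives absolute (norm) convergence precisely when $\re(\lambda e^{i\theta})>0$, and a Cauchy-theorem argument, using that $T$ is norm-holomorphic and bounded on $\Sigma_\alpha$, shows that the value does not depend on the admissible choice of $\theta$; hence $R$ is a well-defined, holomorphic $\mathcal L(X)$-valued function. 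An admissible $\theta\in(-\alpha,\alpha)$ with $|\varphi+\theta|<\frac{\pi}{2}$ exists exactly when $|\varphi|<\alpha+\frac{\pi}{2}$, i.e.\ precisely for $\lambda\in\Sigma_{\alpha+\frac{\pi}{2}}$. For $\lambda>0$ the choice $\theta=0$ recovers $R(\lambda)=\int_0^\infty e^{-\lambda s}T(s)\,ds=R(\lambda,A)$ by Theorem~\ref{th:5.2}.

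Next, applying Lemma~\ref{lem:3.6new} with $U=\Sigma_{\alpha+\frac{\pi}{2}}$ (open and connected) and $K=[1,2]\subset(0,\infty)\subset\rho(A)$ (an infinite compact set on which $R$ agrees with $R(\cdot,A)$), I obtain $\Sigma_{\alpha+\frac{\pi}{2}}\subset\rho(A)$ together with $R(\lambda,A)=R(\lambda)$ throughout the sector. The estimate then follows from the representation: for $\lambda=|\lambda|e^{i\varphi}$ one has
\[ \|\lambda R(\lambda,A)\|\le |\lambda|\int_0^\infty e^{-s\,\re(\lambda e^{i\theta})}\,M\,ds=\frac{M}{\cos(\varphi+\theta)}. \]
For $\lambda\in\Sigma_{\alpha+\frac{\pi}{2}-\varepsilon}$, so $|\varphi|<\alpha+\frac{\pi}{2}-\varepsilon$, one may always select an admissible $\theta\in(-\alpha,\alpha)$ with $|\varphi+\theta|\le\frac{\pi}{2}-\varepsilon$ (take $\theta=0$ when $|\varphi|\le\frac{\pi}{2}-\varepsilon$, and otherwise shift $\theta$ toward the opposite sign of $\varphi$ just enough to reach $|\varphi+\theta|=\frac{\pi}{2}-\varepsilon$), whence $\cos(\varphi+\theta)\ge\sin\varepsilon$ and $\|\lambda R(\lambda,A)\|\le M/\sin\varepsilon$.

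I expect the main obstacle to be the contour-rotation step establishing independence of $\theta$: one must verify that rotating the ray from $\theta_1$ to $\theta_2$ leaves the integral unchanged, which amounts to checking that $\re(\lambda e^{i\psi})$ stays positive for all intermediate directions $\psi$ (guaranteed because the relevant angular interval has both endpoints in $(-\frac{\pi}{2},\frac{\pi}{2})$, hence lies entirely in $(-\frac{\pi}{2},\frac{\pi}{2})$) so that the large circular arcs contribute nothing, while the small arcs near $0$ vanish by the uniform bound $\|T\|\le M$ and shrinking length. Once this Cauchy argument and the resulting representation are secured, the transfer via Lemma~\ref{lem:3.6new} and the elementary optimization of $\theta$ producing the constant $\frac{1}{\sin\varepsilon}$ are routine.
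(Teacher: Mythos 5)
Your proposal is correct and follows essentially the same route as the paper: the paper obtains the holomorphic extension of the resolvent and the estimate \eqref{eq:7.1} by citing \cite[Theorem 2.6.1]{ABHN11} (whose proof is exactly your rotated-ray Laplace integral and contour-rotation argument), and then, just as you do, invokes Lemma~\ref{lem:3.6new} to conclude that this extension is genuinely the resolvent of the relation $A$ on all of $\Sigma_{\alpha+\frac{\pi}{2}}$. Your explicit optimization of $\theta$ yielding $\cos(\varphi+\theta)\geq\sin\varepsilon$ is the standard computation behind the cited estimate and is carried out correctly.
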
   
  \begin{proof}
	It follows from \cite[Theorem 2.6.1]{ABHN11} and its proof that the resolvent of $A$ has a holomorphic extension to $\Sigma_{\alpha +\frac{\pi}{2}}$ satisfying the estimate \eqref{eq:7.1}. Lemma~\ref{lem:3.6new} shows that the holomorphic extension is actually the resolvent of $A$ on $\Sigma_{\alpha+\frac{\pi}{2}}$.   
\end{proof}	
\begin{cor}\label{cor:7.2}
In the situation of Theorem~\ref{th:7.1}, for each $x\in X$, $u(t):=T(t)x$ defines a function $u\in C^\infty ((0,\infty);X)$ such that 
\begin{equation}\label{eq:7.2}
\dot{u}(t)\in Au(t)\mbox{ for all }t>0. 
\end{equation} 
\end{cor}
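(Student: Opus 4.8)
The plan is to treat the two assertions of Corollary~\ref{cor:7.2} separately, beginning with the smoothness of $u$. Since $T:\Sigma_\alpha\to\LX$ is holomorphic and $(0,\infty)\subset\Sigma_\alpha$, for each fixed $x$ the $X$-valued map $z\mapsto T(z)x$ is holomorphic on $\Sigma_\alpha$; its restriction to the real half-line is therefore real-analytic, so $u=T(\cdot)x\in C^\infty((0,\infty);X)$. No estimate is needed for this part.

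For the inclusion \eqref{eq:7.2} I would fix $t>0$ and $\lambda_0>0$, recalling that $(0,\infty)\subset\rho(A)$ by Theorem~\ref{th:5.2} (applied to the bounded, strongly continuous restriction $T|_{(0,\infty)}$) and Theorem~\ref{th:7.1}. The key reduction is that, by the graph characterization \eqref{eq:3.4} applied with $x=u(t)$ and $y=\lambda_0 u(t)-\dot u(t)$, the inclusion $\dot u(t)\in Au(t)$ is equivalent to the single identity
\[
u(t)=R(\lambda_0,A)\bigl(\lambda_0 u(t)-\dot u(t)\bigr).
\]
As $R(\lambda_0,A)\in\LX$ is bounded, it passes through the derivative in $t$; writing $g(t):=R(\lambda_0,A)u(t)=R(\lambda_0,A)T(t)x$, the right-hand side becomes $\lambda_0 g(t)-\dot g(t)$, and the whole matter reduces to proving $\dot g(t)=\lambda_0 g(t)-T(t)x$.

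This last identity I would obtain from the Laplace representation $R(\lambda_0,A)y=\int_0^\infty e^{-\lambda_0 s}T(s)y\,ds$ of Theorem~\ref{th:5.2}. Inserting $y=T(t)x$, using the semigroup law $T(s)T(t)=T(s+t)$ and substituting $r=s+t$ gives
\[
g(t)=\int_0^\infty e^{-\lambda_0 s}T(s+t)x\,ds=e^{\lambda_0 t}\int_t^\infty e^{-\lambda_0 r}T(r)x\,dr.
\]
Differentiating in $t$ by the fundamental theorem of calculus (the integrand $r\mapsto e^{-\lambda_0 r}T(r)x$ is continuous) yields exactly $\dot g(t)=\lambda_0 g(t)-T(t)x$, which combined with the reduction above gives $u(t)=R(\lambda_0,A)(\lambda_0 u(t)-\dot u(t))$ and hence $\dot u(t)\in Au(t)$.

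I expect the only genuinely delicate point to be conceptual bookkeeping rather than analysis. Since $A$ is a true relation and $Au(t)$ is merely a coset of $\mul A$, the inclusion \eqref{eq:7.2} must not be read as an equation $\dot u=Au$; membership has to be phrased through the graph characterization \eqref{eq:3.4} (equivalently through Lemma~\ref{lem:3.3}(a)). Once that reformulation is in place, the analytic steps—differentiating the bounded operator $R(\lambda_0,A)$ through the Laplace integral and applying the fundamental theorem of calculus to a continuous integrand—are routine.
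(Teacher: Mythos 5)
Your argument is correct, but it follows a genuinely different route from the paper's. The paper first shows the \emph{integrated} statement: writing $v(t)=\int_0^t u(s)\,ds$, it observes that $\int_0^\infty \lambda e^{-\lambda t}(v(t),u(t)-x)\,dt=(R(\lambda,A)x,\lambda R(\lambda,A)x-x)\in A$ for all $\lambda>0$, invokes the uniqueness theorem for Laplace transforms with values in a closed subspace (Theorem~\ref{th:5.7}, applied to $Z=X\times X$, $Y=A$) to conclude $(v(t),u(t)-x)\in A$ for all $t>0$, and then passes to $(u(t),\dot u(t))\in A$ by taking difference quotients and using that $A$ is a closed subspace. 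You instead reduce the pointwise membership $\dot u(t)\in Au(t)$, via the graph characterization \eqref{eq:3.4}, to the single identity $u(t)=R(\lambda_0,A)(\lambda_0 u(t)-\dot u(t))$, and verify it by computing $\frac{d}{dt}\,R(\lambda_0,A)T(t)x$ directly from the Laplace representation of the resolvent together with the semigroup law and the fundamental theorem of calculus. Your computation is sound (the commutation of the bounded operator $R(\lambda_0,A)$ with $\frac{d}{dt}$ and the substitution $r=s+t$ are both legitimate), and it has the advantage of being entirely elementary, bypassing Theorem~\ref{th:5.7}; the paper's route buys the intermediate mild-solution identity $(v(t),u(t)-x)\in A$ essentially for free, which is the form of the statement used elsewhere (e.g.\ in Proposition~\ref{prop:5.8} and Theorem~\ref{th:5.9}). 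Your closing remark about reading \eqref{eq:7.2} as coset membership rather than an equation is exactly the right conceptual caveat for relations.
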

\begin{proof}
Let $x\in X$, $u(t)=T(t) x$. Then $u\in C^\infty((0,\infty);X)$. By the definition of the generator, \[R(\lambda,A)x=\int_0^\infty e^{-\lambda t} u(t) dt =\int_0^\infty \lambda e^{-\lambda t} v(t)dt\]
where $v(t)=\int_0^\infty  u(s)ds$. Since $(R(\lambda,A)x,\lambda R(\lambda,A)x-x)\in A$, it follows that 
\[  \int_0^\infty \lambda e^{-\lambda t} (v(t),u(t)-x)dt \in A\mbox{ for all }\lambda >0. \]
By Theorem~\ref{th:5.7} this implies that $(v(t),u(t)-x)\in A$ for all $t>0$. Since $A$ is closed, also the derivative $(u(t),\dot{u}(t))\in A$.
\end{proof}
The converse of Theorem~\ref{th:7.1} has the following form. 
\begin{thm}\label{th:7.3}
Let $\alpha\in [0,\frac{\pi}{2}]$, $M>0$ and let $A\subset X\times X$ be a closed relation such that 
\begin{equation}\label{eq:7.3}
\Sigma_{\alpha +\frac{\pi}{2}}\subset \rho(A)\mbox{ and }\|\lambda R(\lambda,A )\|\leq M\mbox{ for all }\lambda\in \Sigma_{\alpha +\frac{\pi}{2}}.	
\end{equation}	 
Then $A$ generates a holomorphic semigroup $T:\Sigma_\alpha\to{\mathcal L}(X)$ satisfying 
\begin{equation}\label{eq:7.4}
\|T(z)\|\leq M\left( 1+\frac{2e\pi}{\sin \varepsilon}	\right)\mbox{ for all }z\in \Sigma_{\alpha -\varepsilon},\, \, 0<\varepsilon<\alpha.
\end{equation}	
\end{thm}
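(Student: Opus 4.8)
The plan is to construct $T$ by the classical Dunford--Cauchy integral over a sectorial contour and then to verify the semigroup law, the holomorphy, the bound \eqref{eq:7.4}, and finally that $A$ is its generator in the sense of Theorem~\ref{th:5.2}. The key observation is that the estimate \eqref{eq:7.3} and the resolvent identity \eqref{eq:3.3} involve only the bounded operators $R(\lambda,A)\in\LX$; hence the entire operator construction (cf. \cite[Theorem 3.7.11]{ABHN11}) carries over to relations without change, and I would simply have to check that each step uses nothing more than these two ingredients.

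First I would fix $0<\varepsilon<\alpha$ and, for $z\in\Sigma_{\alpha-\varepsilon}$, choose an oriented contour $\Gamma=\Gamma_z\subset\Sigma_{\alpha+\frac{\pi}{2}}$ made of two rays leaving the origin at angles close to $\pm(\frac{\pi}{2}+\alpha)$ together with a circular arc of radius $1/|z|$ centred at $0$, traversed so that $\arg\lambda$ increases, and set
\[ T(z):=\frac{1}{2\pi i}\int_\Gamma e^{\lambda z}R(\lambda,A)\,d\lambda,\qquad z\in\Sigma_\alpha. \]
On the two rays one has $\re(\lambda z)\le -c\,|\lambda|\,|z|$ for some $c=c(\varepsilon)>0$, while $\|R(\lambda,A)\|\le M/|\lambda|$ by \eqref{eq:7.3}; hence the integral converges absolutely, and by Cauchy's theorem it does not depend on the admissible contour, so $T(z)\in\LX$ is well defined. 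Splitting $\Gamma$ into its rectilinear and circular parts and estimating each with \eqref{eq:7.3} yields the bound \eqref{eq:7.4}, the factor $\frac{1}{\sin\varepsilon}$ arising from the $r$-integral over the rays after a substitution $s=|z|r$. Holomorphy of $z\mapsto T(z)$ on $\Sigma_\alpha$ follows by differentiation under the integral sign, the integrand being holomorphic in $z$ with a locally uniform integrable majorant.

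Next I would establish the semigroup law $T(z_1)T(z_2)=T(z_1+z_2)$. Representing the two factors by contours $\Gamma_1,\Gamma_2$ chosen so that every point of $\Gamma_2$ is enclosed by $\Gamma_1$ while no point of $\Gamma_1$ is enclosed by $\Gamma_2$, I would write the product as a double integral, justify Fubini by absolute convergence, and substitute the resolvent identity in the form $R(\lambda,A)R(\mu,A)=\frac{R(\mu,A)-R(\lambda,A)}{\lambda-\mu}$. Carrying out the inner integrations by Cauchy's integral formula, the term containing $R(\mu,A)$ produces the residue $e^{\mu(z_1+z_2)}$ (since $\mu$ lies inside $\Gamma_1$) and hence $T(z_1+z_2)$, whereas the term containing $R(\lambda,A)$ vanishes (since $\lambda$ lies outside $\Gamma_2$). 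This is the operator argument verbatim and uses only \eqref{eq:3.3}.

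The step that genuinely has to be verified for relations, and which I expect to be the main obstacle, is the identification of $A$ as the generator of $T$, i.e.
\[ R(\lambda,A)=\int_0^\infty e^{-\lambda t}T(t)\,dt\qquad(\lambda>0). \]
Here I would put $z=t>0$, insert the definition of $T(t)$, interchange the $t$- and $\lambda$-integrations (again by absolute convergence), and use $\int_0^\infty e^{-\lambda t}e^{\mu t}\,dt=\frac{1}{\lambda-\mu}$ for $\lambda$ to the right of the contour, after which a deformation of $\Gamma$ and Cauchy's formula return $R(\lambda,A)$. Since \eqref{eq:7.4} shows that $t\mapsto T(t)$ is bounded on $(0,\infty)$ and holomorphy gives strong continuity there, Theorem~\ref{th:5.2} applies: the generator of $T$ is the unique closed relation whose resolvent is the Laplace transform just computed, namely $A$ itself. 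Together with the bound \eqref{eq:7.4} and the holomorphy established above, this completes the proof; note that, in contrast to Theorem~\ref{th:7.1}, no extension of the resolvent is needed here because $\Sigma_{\alpha+\frac{\pi}{2}}\subset\rho(A)$ is assumed from the outset.
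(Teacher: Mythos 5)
Your proposal is correct and is essentially the paper's own approach: the paper's proof consists of citing the proofs of \cite[Theorem 2.6.1, Theorem 3.1.7, Proposition 3.7.2(a)]{ABHN11}, which carry out precisely the Dunford--Cauchy contour construction you describe, and your observation that every step uses only the bounded operators $R(\lambda,A)$, the estimate \eqref{eq:7.3} and the resolvent identity \eqref{eq:3.3} (plus, for the generator identification, Proposition~\ref{prop:3.4}/Theorem~\ref{th:5.2}) is exactly why the argument transfers verbatim to relations. The only (inessential) divergence is the semigroup law: you obtain $T(z_1)T(z_2)=T(z_1+z_2)$ directly on $\Sigma_\alpha$ by the nested double-contour residue computation, whereas the paper derives it first on $(0,\infty)$ from the pseudo-resolvent property of $\widehat{T}$ via Laplace-transform uniqueness and then extends it to the sector by analytic continuation.
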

\begin{proof}
	The proof of \cite[Theorem 2.6.1]{ABHN11} yields a holomorphic function $T:\Sigma_\alpha \to {\mathcal L}(X)$ such that 
	\eqref{eq:7.4} holds and \[R(\lambda ,A)=\int_0^\infty e^{-\lambda t}T(t) dt\mbox{ for all } \lambda >0.\]
	Now the proof of \cite[Theorem 3.1.7]{ABHN11} shows that $T(s+t)=T(s)T(t)$ for all $t,s>0$. A standard argument involving holomorphy (see \cite[Proposition 3.7.2(a)]{ABHN11}) shows that the semigroup property also holds on $\Sigma_\alpha$.      
\end{proof}
Next we establish a convergence result. It is one of our main results. Since we admit relations, in contrast to the usual convergence results, no assumption on the limit is required.
\begin{thm}\label{th:7.4}
	Let $0<\alpha\leq \frac{\pi}{2}$, $M>0$ and let $T_n:\Sigma_\alpha \to{\mathcal L}(X)$ be holomorphic semigroups satisfying 
	\[  \|T_n (z)\|\leq M\mbox{ for all }n\in \N,\,\,z\in \Sigma_\alpha.\]
	Denote by $A_n$ the generator of $T_n$ and assume that there exists $\lambda_0\in \Sigma_{\alpha+\frac{\pi}{2}}$ such that $(R(\lambda_0,A_n))_{n\in\N}$ converges strongly and $A=\lim_{n\to\infty}A_n $. Then $A$ generates a bounded holomorphic semigroup $T:\Sigma_\alpha\to{\mathcal L}(X)$. Moreover, for all $x\in X$, 
	\[\lim_{n\in\infty} T_n (z)x=T(z)x\mbox{ uniformly for }z\in K,  \]
	whenever $K\subset \Sigma_\alpha$ is compact.     
\end{thm}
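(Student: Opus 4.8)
The plan is to transfer the holomorphic estimate to the generators $A_n$, propagate the single-point resolvent convergence across the whole sector, identify the limit generator, invoke Theorem~\ref{th:7.3} to produce the semigroup, and finally pass to the limit in a contour representation. First I would apply Theorem~\ref{th:7.1} to each $T_n$: since $\|T_n(z)\|\le M$ on $\Sigma_\alpha$ uniformly in $n$, we obtain $\Sigma_{\alpha+\frac{\pi}{2}}\subset\rho(A_n)$ together with the bound $\|\lambda R(\lambda,A_n)\|\le \frac{M}{\sin\varepsilon}$ for $\lambda\in\Sigma_{\alpha+\frac{\pi}{2}-\varepsilon}$ and $0<\varepsilon<\frac{\pi}{2}$, the constant being independent of $n$. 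In particular $c_0:=\sup_n\|R(\lambda_0,A_n)\|<\infty$.

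The key step is to propagate strong convergence from $\lambda_0$ to all of $\Sigma_{\alpha+\frac{\pi}{2}}$. Using the power series \eqref{eq:3.1} and the uniform bound $c_0$, the expansion $R(\lambda,A_n)=\sum_{k\ge 0}(\lambda_0-\lambda)^k R(\lambda_0,A_n)^{k+1}$ converges uniformly in $n$ on the disk $|\lambda-\lambda_0|<1/c_0$. Since products of uniformly bounded, strongly convergent operators converge strongly, each power $R(\lambda_0,A_n)^{k+1}$ converges strongly (with bound $c_0^{k+1}$), and a dominated-summation argument shows that $R(\lambda,A_n)$ converges strongly on that disk. The functions $\lambda\mapsto R(\lambda,A_n)x$ are holomorphic and locally uniformly bounded on the connected open set $\Sigma_{\alpha+\frac{\pi}{2}}$ and converge on a set with accumulation points, so the vector-valued Vitali theorem yields their strong, locally uniform convergence on all of $\Sigma_{\alpha+\frac{\pi}{2}}$. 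As $A=\lim_n A_n$, Theorem~\ref{th:3.5} then identifies the limit: $\Sigma_{\alpha+\frac{\pi}{2}}\subset\rho(A)$ and $R(\lambda,A)=\lim_n R(\lambda,A_n)$ strongly.

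Passing to the strong limit in the estimate of the first step gives $\|\lambda R(\lambda,A)\|\le\frac{M}{\sin\varepsilon}$ on $\Sigma_{\alpha+\frac{\pi}{2}-\varepsilon}$. For each $0<\delta<\alpha$ this is a uniform bound on $\Sigma_{(\alpha-\delta)+\frac{\pi}{2}}$, so Theorem~\ref{th:7.3} shows that $A$ generates a bounded holomorphic semigroup on $\Sigma_{\alpha-\delta}$; these semigroups agree on overlaps by uniqueness of the resolvent, and hence patch together to a holomorphic semigroup $T$ on $\Sigma_\alpha=\bigcup_{\delta>0}\Sigma_{\alpha-\delta}$, whose generator is $A$.

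Finally I would fix a compact set $K\subset\Sigma_\alpha$, choose an angle $\theta\in(\frac{\pi}{2},\frac{\pi}{2}+\alpha)$ and the associated sector-boundary contour $\Gamma\subset\Sigma_{\alpha+\frac{\pi}{2}}$, and use the representations $T_n(z)=\frac{1}{2\pi i}\int_\Gamma e^{\lambda z}R(\lambda,A_n)\,d\lambda$ and the analogous one for $T$ coming from the proof of Theorem~\ref{th:7.3}. On $\Gamma$ the integrand applied to $x$ is dominated, uniformly in $n$ and in $z\in K$, by $C\,e^{\mathrm{Re}(\lambda z)}|\lambda|^{-1}\|x\|$ with $\mathrm{Re}(\lambda z)\le -c|\lambda|$ for large $|\lambda|$; combining the locally uniform resolvent convergence on $\Gamma$ with this uniform domination, dominated convergence gives $T_n(z)x\to T(z)x$ uniformly for $z\in K$. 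Boundedness of $T$ then follows from $\|T(z)x\|=\lim_n\|T_n(z)x\|\le M\|x\|$, so $\|T(z)\|\le M$ on $\Sigma_\alpha$. The main obstacle is the propagation step: upgrading convergence at the single point $\lambda_0$ to the whole sector with a \emph{uniform} radius of convergence, where the interplay of the uniform resolvent bound, the power-series expansion, and the vector-valued Vitali theorem is essential; the uniform domination on $\Gamma$ needed to turn pointwise into uniform convergence on compacta is the secondary technical point.
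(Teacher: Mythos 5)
Your proof is correct, but it diverges from the paper's argument at both of its key junctures, so a comparison is worthwhile. For propagating the resolvent convergence from the single point $\lambda_0$ to all of $\Sigma_{\alpha+\frac{\pi}{2}}$, the paper argues on the set $U=\rho(A)\cap\Sigma_{\alpha+\frac{\pi}{2}}$: it is open, nonempty by Theorem~\ref{th:3.5}, and relatively closed thanks to the uniform bound $\|\lambda R(\lambda,A_n)\|\leq M/\sin\varepsilon$ together with Corollary~\ref{cor:3.2}; connectedness then gives $U=\Sigma_{\alpha+\frac{\pi}{2}}$, and only afterwards does Theorem~\ref{th:3.5} deliver strong convergence at each $\lambda$. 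You instead first upgrade convergence at $\lambda_0$ to a full disk via the Neumann series \eqref{eq:3.1} with the $n$-independent radius $1/c_0$, then apply the vector-valued Vitali theorem to the locally uniformly bounded family $\lambda\mapsto R(\lambda,A_n)x$, and identify the limit pointwise by Theorem~\ref{th:3.5}; this is sound (a single point would not suffice for Vitali, so the power-series step is genuinely needed and you include it), and it buys you \emph{locally uniform} resolvent convergence on the sector, which you then exploit. For the convergence of the semigroups themselves, the paper avoids contour estimates altogether: it embeds the orbits into $\ell^\infty(X)$, observes that the Laplace transform of $z\mapsto(T_n(z)x)_n$ lands in the closed subspace $c(X)$ of convergent sequences, invokes the uniqueness theorem (Theorem~\ref{th:5.7}) to conclude $(T_n(t)x)_n\in c(X)$ for real $t$, and finishes with Vitali in $z$; the generator of the limit is then read off from the Laplace transform. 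You instead build $T$ from Theorem~\ref{th:7.3} on exhausting subsectors (patching by uniqueness of the Laplace transform) and pass to the limit in the contour representation by dominated convergence, recovering the bound $\|T(z)\|\leq M$ only a posteriori from $\|T(z)x\|=\lim_n\|T_n(z)x\|$. Both routes are standard and complete; the paper's $c(X)$ device is slicker and sidesteps the tail estimates on $\Gamma$ and the behaviour of the contour near the origin, while your argument is more self-contained at the price of those (routine but necessary) estimates, which you correctly identify as the technical core.
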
 
\begin{proof}
	Let $U=\rho(A)\cap \Sigma_{\alpha+\frac{\pi}{2}}$. Then $U$ is open and $\lambda_0\in U$ by Theorem~\ref{th:3.5}. We show that $U$ is relatively closed in $\Sigma_{\alpha+\frac{\pi}{2}}$. To that aim, let $\lambda_k\in U$ and $\lambda_\infty\in \Sigma_{\alpha +\frac{\pi}{2}}$ such that $\lim_{k\to\infty} \lambda_k=\lambda_\infty$. Then there exists $0<\varepsilon<\alpha$ such that $\lambda_k,\lambda_\infty\in \Sigma_{\alpha+\frac{\pi}{2}}$ for all $k\in\N$. Thus 
	\[  \|\lambda_k R(\lambda_k,A_n)\|\leq \frac{M}{\sin \varepsilon}\]  
	for all $k\in\N$, $n\in\N$. It follows from Theorem~\ref{th:3.5}  that $R(\lambda_k,A)=\lim_{n\to\infty}R(\lambda_k,A_n)$ strongly. Hence $\|\lambda_k R(\lambda_k,A)\|\leq \frac{M}{\sin\varepsilon}$ for all $k\in \N$.  Now Corollary~\ref{cor:3.2} implies that $\lambda_\infty\in \rho(A)$. This proves the claim.  Since $\Sigma_{\alpha+\frac{\pi}{2}}$ is connected we deduce that $U=\Sigma_{\alpha+\frac{\pi}{2}}$; i.e. $ \Sigma_{\alpha+\frac{\pi}{2}}\subset \rho(A)$.  It follows from Theorem~\ref{th:3.5} that $\lim_{n\to\infty} R(\lambda, A_n)=R(\lambda, A)$ strongly for all $\lambda\in \Sigma_{\alpha+\frac{\pi}{2}}$.  In order to prove strong convergence of the semigroups we consider the Banach space \[\ell^\infty(X):=\{  x=(x_n)_{n\in\N}  :\|x\|_\infty:=\sup_{n\in\N} \|x_n\|<\infty\}\]
	and its closed subspace $c(X)$ of all convergent sequences.  Let $x\in X$. Define $F:\Sigma_\alpha\to\ell^\infty(X)$ by 
	$F(z)=(T_n(z)x)_{n\in\N}$. Then $F$ is bounded and holomorphic by \cite[Theorem A.7]{ABHN11}. Let $F_0$ be the restriction of $F$ to $(0,\infty)$. The Laplace transform $\widehat{F_0}$ of $F_0$ is given by 
$\widehat{F_0}(\lambda)=(R(\lambda,A_n)x)_{n\in\N}\in c(X)$ for all $\lambda\in \Sigma_{\alpha+\frac{\pi}{2}}$. It follows from  Theorem~\ref{th:5.7}  that $F(t)\in c(X)$ for all $t>0$;  i.e. $T(t)x:=\lim_{n\to\infty}T_n(t)x$ exists for all $x\in X$. Now it follows from Vitali's Theorem \cite[Theorem A.5]{ABHN11} that $\lim_{n\to\infty}T_n (z)x=:T(z)x$ exists for all $x\in X$ and all $z\in \Sigma_\alpha$, uniformly with respect to $z$ on all compact subsets of $\Sigma_\alpha$. Thus $T:\Sigma_\alpha\to{\mathcal L}(X)$ is a bounded holomorphic semigroup. Since for $x\in X$, $\lambda>0$, 
\begin{eqnarray*}
	 \widehat{T}(\lambda )x & = & \int_0^\infty e^{-\lambda t} T(t)x\, dt \\
	  & = & \lim_{n\to\infty}\int_0^\infty e^{-\lambda t} T_n(t)x\, dt =\lim_{n\to \infty}R(\lambda , A_n) x \\
 & = & R(\lambda, A)x,
 \end{eqnarray*}	  
the generator of $T$ is $A$. 
\end{proof}
\section{Convergence of heat semigroups}\label{sec:8}
The convergence results of Section~\ref{sec:6} had the advantage of being elementary. To show that the relation $A_\Omega$ is $m$-dissipative was based on our version of Theorem~\ref{th:1.2}  of the Lumer--Phillips Theorem. However, the convergence results for holomorphic semigroups developed in the preceding  section give better convergence properties.  

We start showing that the relation $A_\Omega$ generates a holomorphic semigroup.  Let $B\subset \R^d$, $d\geq 2$ be a large ball and let 
\[   W=\{ \Omega\subset B:\Omega\mbox{ open and Dirichlet regular}\}.\]
For $\Omega\in W$ we consider the $m$-disspative relation $A_\Omega\subset C(\overline{B})\times C (\overline{B})$ from Section~\ref{sec:6}. Then the following generation theorem holds. 
\begin{thm}\label{th:8.1}
	There exist $\alpha\in (0,\frac{\pi}{2}]$ and $M>0$ such that for each $\Omega\in W$ the relation $A_\Omega$ generates a holomorphic semigroup \[T_\Omega:\Sigma_\alpha\to {\mathcal L}(C(\overline{B}))\] satisfying $\|T_\Omega(z)\|\leq M$ for all $z\in \Sigma_\alpha$.  
\end{thm}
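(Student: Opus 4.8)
The plan is to verify the hypotheses of Theorem~\ref{th:7.3} uniformly in $\Omega\in W$ and then read off the conclusion from that theorem. Concretely, I would produce $\alpha\in(0,\frac{\pi}{2})$ and $M>0$, both independent of $\Omega$, with
\begin{equation*}
\Sigma_{\alpha+\frac{\pi}{2}}\subset\rho(A_\Omega)\quad\text{and}\quad \|\lambda R(\lambda,A_\Omega)\|\le M\ \text{ for all }\ \lambda\in\Sigma_{\alpha+\frac{\pi}{2}}.
\end{equation*}
By definition $R(\lambda,A_\Omega)f$ is the function $u\in C_0(\Omega)$ solving $\lambda u-\Delta u=f$ in $\mathcal D(\Omega)'$, so $\|R(\lambda,A_\Omega)\|_{\mathcal L(C(\overline B))}$ equals the norm of the Dirichlet resolvent $(\lambda-\Delta_\Omega)^{-1}$ on $L^\infty(\Omega)$, the values of $f$ off $\Omega$ being irrelevant (they only feed $\mul A_\Omega$). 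Thus everything reduces to a uniform \emph{holomorphic} resolvent estimate for the Dirichlet Laplacian on $L^\infty(\Omega)$. To get a bound on the full sector $\Sigma_\alpha$ rather than on $\Sigma_{\alpha-\varepsilon}$, I would prove the estimate on a slightly larger sector $\Sigma_{\alpha'+\frac{\pi}{2}}$ with $\alpha'>\alpha$ and then apply \eqref{eq:7.4} with $\varepsilon=\alpha'-\alpha$, so that $M$ and the angle remain uniform.

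The heart of the matter, and the main obstacle, is winning the uniformity over the whole class $W$ while passing from the right half-plane to a genuinely larger sector. Dissipativity (Theorem~\ref{th:6.1}) only yields $\|\lambda R(\lambda,A_\Omega)\|\le1$ for $\re\lambda>0$, and the maximum-principle bound $\|u\|_\infty\le\|f\|_\infty/\re\lambda$ degenerates as $\lambda$ approaches the imaginary axis, so it cannot by itself deliver holomorphy. The classical route to holomorphy on $C(\overline\Omega)$ via Schauder estimates up to the boundary is also unavailable here, since the domains need only be Dirichlet regular. The input I would use instead is the smoothing of the heat flow, in the form of Gaussian bounds: for each real $t>0$ the Dirichlet heat kernel $p^\Omega_t$ is nonnegative and, by domain monotonicity (maximum-principle comparison with the whole space), dominated by the free kernel,
\begin{equation*}
0\le p^\Omega_t(x,y)\le (4\pi t)^{-d/2}\,e^{-|x-y|^2/4t}.
\end{equation*}
The point is that this comparison requires no regularity of $\partial\Omega$, and the constants depend only on $d$; uniformity in $\Omega$ is thereby built in from the start.

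From this real-time Gaussian bound I would invoke the standard complex-time extension (the Davies perturbation method), which converts it into a Gaussian bound for $e^{z\Delta_\Omega}$ on a sector $z\in\Sigma_\alpha$, with constants depending only on $d$ and on the diameter bound coming from $\overline\Omega\subset B$. Integrating the kernel in $y$ gives $\|e^{z\Delta_\Omega}\|_{\mathcal L(L^\infty(\Omega))}\le M$ for $z\in\Sigma_\alpha$, uniformly in $\Omega$, and taking Laplace transforms along rays in the sector yields precisely the uniform resolvent estimate on $\Sigma_{\alpha+\frac{\pi}{2}}$ demanded in the first paragraph.

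Finally I would identify the generator. The Laplace transform of the constructed semigroup is $(\lambda-\Delta_\Omega)^{-1}$, which acts on $f$ by solving $\lambda u-\Delta u=f$ with $u\in C_0(\Omega)$, hence coincides with $R(\lambda,A_\Omega)$ for $\lambda>0$; by the uniqueness in Proposition~\ref{prop:3.4} the generating relation is exactly $A_\Omega$. Here the relational framework is an asset: because Theorem~\ref{th:7.3} constructs the semigroup only on the open sector $\Sigma_\alpha$ and permits it to be degenerate, I never need the decomposition $C(\overline B)=\mul A_\Omega\oplus\overline{\dom A_\Omega}$, and the non-density of $\dom A_\Omega$ in $C(\overline B)$ causes no difficulty. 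Applying Theorem~\ref{th:7.3} with the uniform $\alpha'$ and $M$ then gives the holomorphic semigroups $T_\Omega:\Sigma_\alpha\to\mathcal L(C(\overline B))$ with the uniform bound $\|T_\Omega(z)\|\le M$ on $\Sigma_\alpha$, completing the proof.
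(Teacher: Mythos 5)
Your proposal is correct in outline but follows a genuinely different route from the paper. The paper never touches heat kernels: it takes the half--plane resolvent estimate from the proof of \cite[Theorem 6.1.9]{ABHN11}, where the constant is $c_\Omega=1+\|B_\Omega^{-1}\|$ with $B_\Omega$ the Poisson operator, and then makes $c_\Omega$ uniform over $W$ by writing $B_\Omega^{-1}(f,\varphi)$ as (Newtonian potential of $\tilde f$) plus a harmonic corrector and bounding both terms by the maximum principle and $\|E_d\|_{L^1(B+B)}$ --- quantities independent of $\Omega$. The passage from the half--plane to the sector $\Sigma_{\beta+\frac{\pi}{2}}$ is then the abstract power--series argument of \cite[Corollary 3.7.12]{ABHN11}, after which Theorem~\ref{th:7.3} is applied exactly as you do. Your route replaces all of this by domain monotonicity of the Dirichlet heat kernel plus the Davies complex--time extension of Gaussian bounds. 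What you buy is transparency of the uniformity in $\Omega$ (the free Gaussian dominates every $p_t^\Omega$ with dimensional constants only) and, in principle, the optimal angle; what you pay is the import of substantially heavier external machinery than the paper's elementary maximum--principle computation, and a dependence on the $L^2$ theory of $\Delta_\Omega$ that the paper only invokes later (in Theorem~\ref{th:8.2}).

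One step in your argument needs to be made honest. The Gaussian/Laplace--transform argument produces a bounded holomorphic family $\tilde R(\lambda)=(\lambda-\Delta_\Omega)^{-1}$ in ${\mathcal L}(L^\infty(\Omega))$ on the sector, but the hypothesis of Theorem~\ref{th:7.3} is that $\Sigma_{\alpha+\frac{\pi}{2}}\subset\rho(A_\Omega)$ for the relation $A_\Omega$ on $C(\overline{B})$; this requires $\tilde R(\lambda)$ to map $C(\overline{B})$ into $C_0(\Omega)\cap\dom A_\Omega$, which is not a formal consequence of the $L^\infty(\Omega)$ bound, and Proposition~\ref{prop:3.4} cannot be invoked until the pseudo-resolvent is known to live in ${\mathcal L}(C(\overline{B}))$. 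The identification $\bigl(R(\lambda,A_\Omega)f\bigr)_{|\Omega}=(\lambda-\Delta_\Omega)^{-1}(f_{|\Omega})$ is available for $\lambda>0$ (this is proved in the paper via \cite[Theorem 2.5]{AD07}, resp.\ \cite[Lemma 4.2]{AB99}); to propagate it to complex $\lambda$ you should run the connectedness argument of the proof of Theorem~\ref{th:7.4}: the set $V=\rho(A_\Omega)\cap\Sigma_{\alpha+\frac{\pi}{2}}$ is open by Proposition~\ref{prop:3.1}, on the component of $V$ containing $(0,\infty)$ the two resolvents agree by holomorphy, so the uniform bound transfers to $\|R(\lambda,A_\Omega)\|_{{\mathcal L}(C(\overline{B}))}$ there, and Corollary~\ref{cor:3.2} then shows that this component is relatively closed, hence equals $\Sigma_{\alpha+\frac{\pi}{2}}$. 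With that insertion your proof is complete.
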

\begin{proof}
By the proof of \cite[Theorem 6.1.9]{ABHN11} for each $\Omega\in W$ there exists $c_\Omega\geq 0$ such that 
\[   \|\lambda R(\lambda, A_\Omega)\|\leq c_\Omega M\mbox{ for all }\lambda\in\C \mbox{ with }\re (\lambda)>0,\]
where $M$ is independent of $\Omega$ and $c_\Omega=1+\|B_\Omega^{-1}\|$, where $B_\Omega$ is the \emph{Poisson operator} defined on $Y:=C(\overline{\Omega})\times C(\partial \Omega)$ by 
\[  D (B_\Omega )=\{ (u,0)\in Y:\Delta u\in C(\overline{\Omega})\}\mbox{ and }B_\Omega (u,0)=(\Delta u,-u_{|\partial \Omega}).\]
In the proof of \cite[Theorem 6.1.9]{ABHN11} it is shown that $B_\Omega$ is bijective and for $(f,\varphi)\in Y$, $(-B_\Omega)^{-1}(f,\varphi)=:u=v+w$ where $w=-E_d\star \tilde{f}$, $\tilde{f}$ the extension of $f$ by $0$, $E_d$ the Newtonian potential, $v\in C(\overline{\Omega})$ harmonic on $\Omega$ such that $v_{|\partial \Omega}=\varphi-w_{|\partial \Omega}$. Thus, by the maximum principle, 
\[  \|u\|_{C(\overline{\Omega})}\leq \|v\|_{C(\overline{\Omega})} + \|w\|_{C(\overline{\Omega})}\leq 
\|\varphi\|_{C(\partial\Omega )}   +2\|w\|_{C(\overline{\Omega})} .    \]  
But 
\[ \|w\|_{C(\overline{\Omega})} =\|E_d\star \tilde{f}\|_{C(\overline{\Omega})}\leq \|E_d\|_{L^1(B+B)}\|f\|_{C(\overline{B})}, \]
a bound which is independent of $\Omega$. 
Thus \[\|\lambda R(\lambda, A_\Omega)\|\leq cM   \mbox{ for all }\lambda\in\C\mbox{ with }\re (\lambda) >0,\]
for some $c>0$ and all $\Omega\in W$. 

Now the power series argument of \cite[Corollary 3.7.12]{ABHN11} yields $\beta\in (0,\frac{\pi}{2}]$, $\tilde{M}>0$ such that 
\[  \Sigma_{\beta +\frac{\pi}{2}}\subset \rho(A_\Omega)\mbox{ and }\|\lambda R(\lambda,A_\Omega)\|\leq \tilde{M}\mbox{ for all }\lambda\in \Sigma_{\beta+\frac{\pi}{2}}\mbox{ and all }\Omega\in W.\]
Theorem~\ref{th:7.3} gives the desired result. 
 \end{proof}
Let $\Omega\in W$. Recall that 
\[C_0(\Omega)= \{   v\in C(\overline{B}):v=0\mbox{ on }\overline{B}\setminus \Omega\}.\]
The semigroup $T_\Omega$ can be seen as a function in $C^\infty((0,\infty);{\mathcal L}(C(\overline{B})))$. We now characterize the orbits $T_\Omega (\cdot)u_0$ as solutions of the heat equation in the following way. 
\begin{thm}\label{th:8.2}
The function $u=T_\Omega ( \cdot{} )u_0$ is the unique solution of 
\begin{equation}\label{eq:8.1}
 \dot{u}(t)=\Delta u(t) \mbox{ in }{\mathcal D}(\Omega)'\mbox { for all }t>0
 \end{equation}
with $u\in C^\infty((0,\infty);C_0(\Omega))$ and $\lim_{t\to 0+}u(t)=u_0\mbox{ in }L^2(\Omega)$.
\end{thm}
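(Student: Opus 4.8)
The plan is to identify the restriction to $\Omega$ of the semigroup $T_\Omega$ with the $C_0$-semigroup generated by the $L^2$-realization $\Delta_2$ of the Dirichlet Laplacian on $L^2(\Omega)$ (the self-adjoint operator $\leq 0$ associated with the form $\int_\Omega \nabla u\cdot\nabla v$ with form domain $H_0^1(\Omega)$), and then to read off both existence and uniqueness from the $L^2$-theory. First I would dispose of the smoothness and of the differential equation, which are free. By Corollary~\ref{cor:7.2} the function $u(t)=T_\Omega(t)u_0$ satisfies $u\in C^\infty((0,\infty);C(\overline B))$ and $(u(t),\dot u(t))\in A_\Omega$ for all $t>0$. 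By the very definition of $A_\Omega$ this says $u(t)\in C_0(\Omega)$ and $\Delta u(t)=\dot u(t)$ in $\mathcal D(\Omega)'$, which is exactly \eqref{eq:8.1}; since $C_0(\Omega)$ is a closed subspace of $C(\overline B)$ and the difference quotients of $u$ lie in it, one gets in fact $u\in C^\infty((0,\infty);C_0(\Omega))$.

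The heart of the matter is the comparison lemma: $\dom A_\Omega\subset \dom\Delta_2$ with $\Delta_2 v=\Delta v$ for $v\in\dom A_\Omega$. To prove it, take $v\in\dom A_\Omega$, so $v\in C_0(\Omega)$ and $f:=\Delta v\in C(\overline B)$. Let $v_2\in\dom\Delta_2\subset H_0^1(\Omega)$ be the variational solution of $\Delta_2 v_2=f|_\Omega$ (legitimate since $0\in\rho(\Delta_2)$ for bounded $\Omega$). Because $\Omega$ is Dirichlet regular and $f$ is continuous, the variational solution is continuous up to the boundary and vanishes there, i.e.\ $v_2\in C_0(\Omega)$; this is the classical elliptic regularity for Dirichlet regular domains underlying the results of \cite{AD07,AD08}. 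Thus $v$ and $v_2$ are both $C_0(\Omega)$-solutions of $\Delta\,\cdot\,=f$ on $\Omega$, and since $0\in\rho(A_\Omega)$ by Theorem~\ref{th:6.1} the relation $A_\Omega$ is injective, so $v=v_2\in H_0^1(\Omega)$ and $\Delta_2 v=f$. I expect this identification of the continuous and the $L^2$ realization to be the main obstacle, since it is exactly where Dirichlet regularity enters. Its consequence, via the Laplace representation of the resolvent and the definition of $\Delta_2$, is that the restriction to $\Omega$ of $R(\lambda,A_\Omega)g$ equals $R(\lambda,\Delta_2)(g|_\Omega)$ for all $g\in C(\overline B)$ and $\lambda>0$; taking Laplace transforms and invoking uniqueness (Theorem~\ref{th:5.7}) then yields $(T_\Omega(t)g)|_\Omega=e^{t\Delta_2}(g|_\Omega)$ for all $t>0$.

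With this identity the initial condition is immediate: $u(t)|_\Omega=e^{t\Delta_2}(u_0|_\Omega)\to u_0|_\Omega$ in $L^2(\Omega)$ as $t\to0+$, because $e^{t\Delta_2}$ is a $C_0$-semigroup on $L^2(\Omega)$. This completes the existence half of the statement.

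For uniqueness, let $v$ be any solution and set $w=u-v$. Then $w\in C^\infty((0,\infty);C_0(\Omega))$, and since $A_\Omega$ is a subspace, $(w(t),\dot w(t))\in A_\Omega$ and $w(t)\to0$ in $L^2(\Omega)$. By the comparison lemma $w(t)\in\dom\Delta_2$ with $\Delta_2 w(t)=\dot w(t)$, and since restriction to $\Omega$ is continuous linear, $w(\cdot)|_\Omega\in C^\infty((0,\infty);L^2(\Omega))$ is a classical solution of $\frac{d}{dt}w=\Delta_2 w$. Fixing $t_0>0$, the map $s\mapsto e^{(t_0-s)\Delta_2}w(s)$ has vanishing derivative on $(0,t_0)$, hence is constant; letting $s\to0+$ forces the constant to be $0$, because $\|e^{(t_0-s)\Delta_2}\|\leq1$ and $\|w(s)\|_{L^2(\Omega)}\to0$, whence $w(t_0)=0$. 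Therefore $u(t)=v(t)$ a.e.\ on $\Omega$, and as both lie in $C_0(\Omega)$ they coincide on all of $\overline B$. This proves that $T_\Omega(\cdot)u_0$ is the unique solution.
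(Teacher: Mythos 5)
Your proof is correct and follows essentially the same route as the paper: identify $(T_\Omega(t)g)_{|\Omega}$ with the $C_0$-semigroup generated by the $L^2$-Dirichlet Laplacian by comparing resolvents and invoking Laplace-transform uniqueness, then read off the initial condition and uniqueness from the $L^2$-theory. The only (minor) divergence is in the key regularity step $v\in\dom A_\Omega\Rightarrow v_{|\Omega}\in H_0^1(\Omega)$: the paper cites \cite{AD07} or \cite{AB99} directly, whereas you rederive it from the ``forward'' statement (variational solutions with continuous data lie in $C_0(\Omega)$ on Dirichlet regular sets) combined with the injectivity of $A_\Omega$ coming from $0\in\rho(A_\Omega)$ in Theorem~\ref{th:6.1} --- a nice self-contained use of the machinery already established.
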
   
\begin{proof}
Let $g\in C(\overline{B})$, $u(t)=T_\Omega (t)g$. By Corollary~\ref{cor:7.2}, we have  $u\in C^\infty((0,\infty);C(\overline{B}))$ and $\dot{u}(t)\in A_\Omega u(t)$ for all $t>0$. In particular $u(t)\in \dom A_\Omega\in C_0(\Omega)$ and $\dot{u}(t)=\Delta u(t)$ in ${\mathcal D}(\Omega)'$. In order to show that $\lim_{t\to 0+} u(t)_{|\Omega}=g_{|\Omega}$ in $L^2(\Omega),$ we consider  the Dirichlet Laplacian $\Delta_\Omega$ on $L^2(\Omega)$ given by 
\[  \dom \Delta_\Omega :=\{ v\in H_0^1(\Omega):\Delta v\in L^2(\Omega)\}\mbox{ and } \Delta_\Omega v:=\Delta v.\]
Then $\Delta_\Omega$ generates a $C_0$-semigroup $T_2$ on $L^2(\Omega)$. We claim that for $g\in C(\overline{B})$, $\lambda>0$, $(R(\lambda, A_\Omega)g)_{|\Omega}=R(\lambda,\Delta_\Omega)(g_{|\Omega})$. In fact, let $v=R(\lambda, A_\Omega)g$. Then $v\in \dom A_\Omega \subset C_0(\Omega)$ and $\lambda v-g \in A_\Omega v$. Hence $\lambda v-g=\Delta v$ in ${\mathcal D}(\Omega)'$. It follows from \cite[Theorem 2.5]{AD07} or \cite[Lemma 4.2]{AB99} that $v_{|\Omega}\in H_0^1(\Omega)$. As a consequence, $v_{|\Omega} \in D(\Delta_\Omega)$ and $\lambda v_{|\Omega}-\Delta v_{|\Omega}=g_{|\Omega}$. This is the claim. It follows from the uniqueness theorem \cite[Theorem 1.7.3]{ABHN11} that $(T_\Omega (t)g)_{|\Omega}=T_2(t)(g_{|\Omega})$. 
Hence $\lim_{t\to 0+} u(t)_{|\Omega}=g_{|\Omega}$ in $L^2(\Omega)$. 

In oder to show uniqueness, let $u$ be a solution of \eqref{eq:8.1}. Since for $t>0$, $u(t)\in C_0(\Omega)$ and 
$\Delta u(t)_{||\Omega}=\dot{u}(t)_{|\Omega}\in C(\overline{\Omega})$, it follows as before that  $u(t)_{|\Omega}\in H_0^1(\Omega)$. Thus $\dot{u}(t)_{|\Omega}=\Delta_\Omega u(t)$. Since $\lim_{t\to 0+} u(t)_{|\Omega} =g_{|\Omega}$ in $L^2(\Omega)$, it follows that $u(t)_{|\Omega}=T_2(t)(g_{|\Omega})$. This proves uniqueness since $u(t)=0$ outside $\Omega$.     	
\end{proof}
The mapping $\Omega\mapsto T_\Omega$ is continuous in the following sense. We keep the notations of Theorem~\ref{th:8.1}. 
\begin{thm}\label{th:8.3}
Let $\Omega_n,\Omega\in W$ such that $\Omega_n\to\Omega$ as $n\to\infty$. Assume that $\Omega$ is stable. Then for each $g\in C(\overline{\Omega})$, 
\[  T_n(z)g\to T(z)g\mbox{ in }C(\overline{B})\mbox{ as }n\to\infty,\]
uniformly with respect to  $z$ on compact subsets of $\Sigma_\alpha$.   
\end{thm}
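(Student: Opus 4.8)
The plan is to deduce Theorem~\ref{th:8.3} directly from the abstract convergence result for holomorphic semigroups, Theorem~\ref{th:7.4}, once its three hypotheses have been verified for the sequence $A_{\Omega_n}$ and the limit $A_\Omega$. The decisive point is that Theorem~\ref{th:8.1} already supplies the ingredient which is normally the hard one to obtain: a \emph{uniform} sector $\Sigma_\alpha$ and a \emph{uniform} bound $\|T_{\Omega_n}(z)\|\leq M$ for all $z\in\Sigma_\alpha$ and all $n\in\N$. This holds because $\Omega_n,\Omega\in W$ and the constants $\alpha,M$ produced in Theorem~\ref{th:8.1} do not depend on the domain. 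Thus the equicontinuity demanded by Theorem~\ref{th:7.4} comes for free, and $T_\Omega$ is a bounded holomorphic semigroup on the same sector.

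Next I would check the two remaining hypotheses of Theorem~\ref{th:7.4}. For the resolvent condition it suffices to exhibit a single $\lambda_0\in\Sigma_{\alpha+\frac{\pi}{2}}$ for which $(R(\lambda_0,A_{\Omega_n}))_{n}$ converges strongly; since $\alpha>0$, every $\lambda_0>0$ lies in $\Sigma_{\alpha+\frac{\pi}{2}}$, and strong convergence $R(\lambda_0,A_{\Omega_n})\to R(\lambda_0,A_\Omega)$ is precisely Theorem~\ref{th:6.3}(a). For the identification of the limit relation I would invoke the proof of Theorem~\ref{th:6.3}, which passes through the equivalences of Theorem~\ref{th:5.11}: assertion~(v) there gives $A_\Omega=\lim_{n\to\infty}A_{\Omega_n}$, so the limit relation appearing in Theorem~\ref{th:7.4} is exactly $A_\Omega$.

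With these inputs Theorem~\ref{th:7.4} applies and yields a bounded holomorphic semigroup $T$ on $\Sigma_\alpha$, generated by $\lim_n A_{\Omega_n}=A_\Omega$, together with the convergence $T_{\Omega_n}(z)g\to T(z)g$ for every $g\in C(\overline{B})$, uniformly for $z$ in compact subsets of $\Sigma_\alpha$. It then remains only to identify $T$ with $T_\Omega$: both are holomorphic semigroups generated by $A_\Omega$, hence share the Laplace transform $R(\cdot\,,A_\Omega)$ on $(0,\infty)$, so $T=T_\Omega$ by the uniqueness built into Theorem~\ref{th:5.2}. This is the assertion.

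The step I expect to require the most care is the limit identification $A_\Omega=\lim_n A_{\Omega_n}$, rather than the mere inclusion $A_\Omega\subset\lim_n A_{\Omega_n}$ recorded in Theorem~\ref{th:6.3}(c); this is where the stability of $\Omega$ enters, through the elliptic convergence $R(0,\Delta_{\Omega_n})\to R(0,\Delta_\Omega)$ of \cite{AD08} used in Theorem~\ref{th:6.3}, guaranteeing that the limit relation is not larger than $A_\Omega$. Everything else is a matter of matching the uniform constants $\alpha,M$ of Theorem~\ref{th:8.1} to the hypotheses of Theorem~\ref{th:7.4}.
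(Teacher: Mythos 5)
Your proposal is correct and follows essentially the same route as the paper: the paper's proof likewise combines the strong resolvent convergence of Theorem~\ref{th:6.3}(a) with the uniform sectorial bounds of Theorem~\ref{th:8.1} and then invokes Theorem~\ref{th:7.4}. You merely spell out the details the paper leaves implicit (the identification $A_\Omega=\lim_n A_{\Omega_n}$ via Theorem~\ref{th:5.11} and the identification $T=T_\Omega$ by Laplace transform uniqueness), and these are filled in correctly.
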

\begin{proof}	
By Theorem~\ref{th:6.3}, for $\lambda >0$, $\lim_{n\to\infty} R(\lambda, \Omega_n)=R(\lambda, \Omega)$ strongly.  Thus, in view of Theorem~\ref{th:8.1} the claim follows from Theorem~\ref{th:7.4} 	
\end{proof}
Theorem~\ref{th:8.3} can be rewritten in terms of the solutions of the heat equation.  The point is that the convergence holds in $C(\overline{B})$ for the uniform norm (and not merely in $L^2(\Omega)$ as in \cite[Theorem 6.2]{taiwan}).   
\begin{cor}\label{cor:8.4}
Let $\Omega_n,\Omega_\infty\in W$ such that $\Omega_n\to \Omega_\infty$ as $n\to\infty$. Assume that $\Omega_\infty$ is stable. Let $u_0\in C(\overline{B})$ and let $u_n$ be the solution of 
 	\begin{equation}\label{eq:8.2}
 		\dot{u_n}(t)=\Delta u_n(t) \mbox{ in }{\mathcal D}(\Omega_n)'\mbox { for all }t>0
 	\end{equation}
 	with $u_n\in C^\infty((0,\infty);C_0(\Omega_n))$, $\lim_{t\to 0+}u_n(t)=u_0\mbox{ in }L^2(\Omega)$ and $n\in\N\cup\{\infty\}$. 
 	
 	Then $u_n(t)\to u_\infty(t)$ in $C(\overline{B})$ uniformly on $[\frac{1}{\tau},\tau]$ for all $\tau>1$. 
\end{cor}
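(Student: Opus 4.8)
The plan is to recognize each solution $u_n$ as the orbit of the holomorphic semigroup $T_{\Omega_n}$ generated by $A_{\Omega_n}$, and then to invoke Theorem~\ref{th:8.3} directly; the only supplementary observation needed is that the time interval in question is a compact subset of the sector $\Sigma_\alpha$.

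First I would apply Theorem~\ref{th:8.2} to each $n\in\N\cup\{\infty\}$. For fixed $n$, the problem \eqref{eq:8.2} together with the regularity requirement $u_n\in C^\infty((0,\infty);C_0(\Omega_n))$ and the initial condition $\lim_{t\to 0+}u_n(t)=u_0$ in $L^2(\Omega_n)$ has, by Theorem~\ref{th:8.2} applied with $\Omega=\Omega_n$, a unique solution, and that solution is exactly $u_n(t)=T_{\Omega_n}(t)u_0$. Hence the statement to be proved is precisely that $T_{\Omega_n}(t)u_0\to T_{\Omega_\infty}(t)u_0$ in $C(\overline{B})$ uniformly for $t\in[\frac{1}{\tau},\tau]$.

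Next I would apply Theorem~\ref{th:8.3} with $\Omega=\Omega_\infty$ and $g=u_0$. Since $\Omega_n\to\Omega_\infty$ and $\Omega_\infty$ is stable, Theorem~\ref{th:8.3} yields $T_{\Omega_n}(z)u_0\to T_{\Omega_\infty}(z)u_0$ in $C(\overline{B})$, uniformly with respect to $z$ on every compact subset of $\Sigma_\alpha$. Because $\alpha>0$, the real interval $[\frac{1}{\tau},\tau]$ lies in $(0,\infty)\subset\Sigma_\alpha$ and is compact, so restricting this convergence to $z=t\in[\frac{1}{\tau},\tau]$ gives the claim.

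I do not expect a substantial obstacle: the corollary is essentially a reformulation of Theorem~\ref{th:8.3} once the solutions are identified with semigroup orbits. The only points requiring care are bookkeeping ones, namely matching the uniqueness statement of Theorem~\ref{th:8.2} on each $\Omega_n$ against the hypotheses imposed on $u_n$ (in particular that the initial trace is taken in the $L^2$-space of the corresponding domain), and noting that the compact-set convergence supplied by Theorem~\ref{th:8.3} specializes to uniform convergence on the compact time interval $[\frac{1}{\tau},\tau]$.
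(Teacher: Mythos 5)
Your proposal is correct and follows exactly the route the paper intends: the corollary is stated as a reformulation of Theorem~\ref{th:8.3}, obtained by identifying each $u_n$ with the orbit $T_{\Omega_n}(\cdot)u_0$ via the uniqueness part of Theorem~\ref{th:8.2} and then specializing the compact-set convergence of Theorem~\ref{th:8.3} to the compact interval $[\frac{1}{\tau},\tau]\subset\Sigma_\alpha$. No gaps.
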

Finally we mention that another mode of convergence of $\Omega_n$ to $\Omega$ is studied in \cite[Definition 5.1 and Theorem 5.5]{AD07}. Theorem~\ref{th:8.3} remains true for this alternative mode of convergence.   \\

\noindent \textbf{Acknowledgments:} The authors are grateful to the University of the Witswatersrand for its financial support which made this scientific collaboration possible.     

\nocite{RDV3}
\bibliographystyle{abbrv}

\end{document}